\newtheorem{teo}{Theorem}
\newtheorem{theorem}{Theorem}[section]
\newtheorem{lemma}[theorem]{Lemma}
\newtheorem{proposition}[theorem]{Proposition}
\newtheorem{conjecture}[theorem]{Conjecture}
\newtheorem{corollary}[theorem]{Corollary}
\theoremstyle{definition}
\newtheorem{definition}[theorem]{Definition}
\newtheorem{example}[theorem]{Example}
\theoremstyle{remark}
\newtheorem{rmk}[theorem]{Remark}
\numberwithin{equation}{section}
\newcommand{\cC}{\mathcal{C}}
\newcommand{\cD}{\mathcal{D}}
\newcommand{\cH}{\mathcal{H}}
\newcommand{\cM}{\mathcal{M}}
\newcommand{\cN}{\mathcal{N}}
\newcommand{\cX}{\mathcal{X}}
\newcommand{\cY}{\mathcal{Y}}
\newcommand{\cZ}{\mathcal{Z}}
\newcommand{\cI}{\mathcal{I}}
\newcommand{\cJ}{\mathcal{J}}
\newcommand{\cRR}{\mathcal{R}}
\newcommand{\cR}{k}
\newcommand{\cS}{\mathcal{S}}
\newcommand{\cU}{\mathcal{U}}
\newcommand{\cW}{\mathcal{W}}
\newcommand{\mcM}{\mathscr{M}}
\newcommand{\Sets}{\mathrm{Sets}}
\newcommand{\sS}{\mathcal{S}}
\newcommand{\sSGd}{sG\mbox{-}\Sets_{d}}
\newcommand{\sMod}{s\mathscr{M}\mathrm{od}}
\newcommand{\Mod}{\mathscr{M}\mathrm{od}}
\newcommand{\Psh}{\mathrm{PSh}}
\newcommand{\Sh}{\mathrm{Sh}}
\newcommand{\coCAlg}{\mathrm{coCAlg}}
\newcommand{\coCAlgM}{\mathrm{coCAlg}(\mcM_\cR)}
\newcommand{\coCAlgC}{\mathrm{coCAlg}_{\cR}(\cC)}
\newcommand{\scoCAlg}{s\mathrm{coCAlg}_{\cR}(Sm_F)}
\newcommand{\coCAlgtr}{\mathrm{coCAlg}^{tr}_{\cR}(Sm_F)}
\newcommand{\scoCAlgtr}{s\mathrm{coCAlg}^{tr}_{\cR}(Sm_F)}
\newcommand{\Hom}{\mathrm{Hom}}
\newcommand{\Spec}{\mathrm{Spec}}
\newcommand{\PST}{\mathrm{PST}(Sm_F,\cR)}
\newcommand{\mapmod}{\mathrm{Map}_{s\Mod_\cRR(Sm_F)}}
\newcommand{\mapGd}{\mathrm{Map}_{\Psh(Sm_F,\sSGd)}}
\newcommand{\colim}{\operatorname{colim}}		 
\renewcommand{\lim}{\operatorname{lim}}
\begin{document}

\title{Rational and $p$-local motivic homotopy theory}

\author{Gabriela Guzman}
\address{Universit\"at Duisburg-Essen\\
Thea Leymann str.9\\ 
45127 Essen\\
Germany}
\email{gabriela.guzman@uni-due.de}



\date{September 12, 2019} 



\begin{abstract}

Let $F$  and $k$ be  perfect fields. The main goal of this paper is to investigate algebraic models for the Morel-Voevodsky unstable motivic homotopy category $\mathrm{Ho}((F)$ after  $\mathbf{H}^{\mathbb{A}^1}k$ localization. More specifically, we extend results of Goerss to the $\mathbb{A}^1$-algebraic topology setting:  we study the homotopy theory of the category $s\coCAlg_k(Sm_F)$ of presheaves of simplicial coalgebras over a field $k$ and their $\tau$ and $\mathbb{A}^1$-localizations. For $k$ algebraically closed, we show that the unit of the adjunction $k^{\delta}[-]\dashv(-)^{gp}$ determines the $\mathbf{H}^{\mathbb{A}^1}k$ homotopy type, where $k^{\delta}[-]$ is the canonical coalgebra functor induced by the diagonal map $\Delta:\cX\rightarrow \cX\times \cX$. We extend this result for the category of presheaves of coalgebras over a non-algebraically closed field $k$ and the category of discrete $G$-motivic spaces, for $G=Gal(\bar{k}/k)$.


On the other hand, we show that the category of coalgebra objects in $\PST$ is locally presentable,  where $\PST$ is the category of  presheaves with Voevodsky transfers  and the monoidal  structure is given by a Day convolution product.
\end{abstract}

\maketitle

{\hypersetup{linkcolor=black}

\setcounter{tocdepth}{1}

\tableofcontents}

\section{Introduction}
\it{Motivation:} \rm One of the problems in classical algebraic topology is to find good algebraic invariants. Even in classical algebraic topology, homotopy groups $\pi_{*}(X)$, homology groups $H_{*}(X,\cR)$, cohomology rings $H^{*}(X,\cR)$ and Steenrod operations $Sq^{i}:H^{n}(X,\mathbb{Z}/2\mathbb{Z})\rightarrow H^{n+i}(X,\mathbb{Z}/2\mathbb{Z})$ are  not sufficient to distinguish homotopy types. It is possible to construct examples where two spaces have isomorphic singular homology groups, but their cohomology rings are not isomorphic, or two spaces with isomorphic cohomology rings but with different Steenrod operations. Finally, it is possible to construct spaces with isomorphic cohomology rings and Steenrod operations but with different Massey products. Massey products are a consequence of the existence of chain level multiplication on $C^{*}(X,\mathbb{Z})$, this suggests that Differential Graded Algebras or dually Differential Graded Coalgebras are finer algebraic invariants.

Formally the problem is, for detecting the homology localization, whether there exists a category $\cD$ of algebraic nature, \it{e.g};  \rm  group objects, ring objects, algebra objects or coalgebra objects, and a functor 
\[F:\mathrm{Ho}(\sS)\rightarrow \cD\] such that $F$ is fully faithful. 

More concretely,  this problem was studied by Quillen \cite{MR0258031} and Sullivan \cite{MR0646078}, \cite{MR0425956}  for rational coefficients, and later by Goerss \cite{MR1363853} for coefficients in an arbitrary field. Specifically, Quillen showed that there exists an equivalence among the categories of simply connected spaces, $1$-reduced differential graded Lie algebras and $2$-reduced Differential Graded Coalgebras.

\[\mathrm{L}_{H_{*}\mathbb{Q}}\mathrm{Ho}(\sS_{\geq 1})\rightarrow \mathrm{Ho}(DGL_{\mathbb{Q},\geq 1})\rightarrow \mathrm{Ho}(DGC_{\mathbb{Q},\geq 2}).\]

On the other hand, Sullivan proved that there is an equivalence between the category of nilpotent spaces of finite $\mathbb{Q}$-type, \it{i.e} \rm nilpotent spaces such that $H^{*}(X,\mathbb{Q})$ is of finite dimension, and the subcategory of $DGA_{\mathbb{Q}}$ such that $A^n$ is a $\mathbb{Q}$-vector space of finite dimension.
\[A_{PL}:\mathrm{L}_{H^{*}\mathbb{Q}}\mathrm{Ho}(\sS^{fin,Nil})\rightarrow \mathrm{Ho}(DGA^{fin}_{\mathbb{Q}}).\]

 Goerss avoided those conditions and used the category of simplicial coalgebras $s\coCAlg_\cR$ to show that the canonical chain complex functor at the simplicial level, and with a coalgebra structure induced by the diagonal map $\Delta:X\rightarrow X\times X$, induces, for $\cR$ an algebraically closed field, a fully faithful functor
 \begin{equation}\label{Goerssthm}
 \mathrm{L}_{H_{*}\cR}\sS\rightarrow s\coCAlg_{\cR}.
 \end{equation} The key ingredient in Goerss' approach is a good understanding of the category of coalgebras over an algebraically closed field, provided in \cite{sweedler1969hopf}. The condition of $\cR$ being an algebraically closed field is a strong condition. Goerss refines this functor to the category of simplicial coalgebras over an arbitrary field $\cR$. For that, he uses an intermediate category of spaces with a group action of the absolute Galois group $G=Gal(\bar{\cR}/\cR)$. Here it becomes fundamental to understand the homotopy fixed points functor, which was studied by Goerss in \cite{MR1320993}.

Let us now turn to the algebraic geometry setting and fix $F$ a perfect field. Motivic homotopy theory was introduced by Morel and Voevodsky in their foundational paper  \cite{MorelVoevodsky}, where topological spaces are replaced by presheaves of spaces on the category of smooth schemes of finite type over $F$. They constructed the unstable motivic homotopy category as the $\mathbb{A}^1$-localization of the Jardine model structure  $\mathrm{L}_{Nis}s\Psh(Sm_F)$.  

For motivic spaces we have two candidates for singular homology, $\mathbb{A}^1$-homology  and Suslin homology.  In \cite{MorelFabien2012A1at}, the $\mathbb{A}^1$-homology sheaves $\mathbf{H}^{\mathbb{A}^1}(\cX)$ were introduced; this homology theory could be a priori naive, but actually computes a wealth of important information. 

In  \cite{MR1376246} Suslin and Voevodsky  introduced a singular homology theory for algebraic varieties known as Suslin homology. The idea of their definition is based on the Dold-Thom theorem. Let  $X$ be a scheme of finite type over a field $F$ and $\Delta^{\bullet}_F$ is the cosimplicial scheme, with $n$-cosimplices $\Delta^n_F=Spec\mbox{ }F[t_0,t_1,\cdots, t_n]/(\sum t_i-1)$. The Suslin homology group $H_{i}^{Sus}(X,\mathbb{Z})$ is defined as  $\pi_{i}$ of the simplicial abelian group 

\[Hom(\Delta^{\bullet}_F,\coprod_{d=0}^{\infty}S^d(X))^{+}.\] Furthermore, they note that after inverting $p$, with $p$ being  the exponential characteristic of $F$, $H_{*}^{Sus}(X)$ coincides with 
\[\pi_{*}(C_{*}^{Sus}(X))=H_{*}(C^{Sus}_{*}(X), d=\sum(-1)^{i}\delta_i),\] where $C_{n}^{Sus}(X)$ is the simplicial abelian group generated by closed integral subschemes $Z\subset \Delta^{n}_F\times X$ such that $Z\rightarrow \Delta^n_F$ is finite and surjective. In other words, Suslin homology is the homology of the global sections of the Suslin complex.

Consider the cannonical map adding transfers, with coefficients in a commutative ring $k$,
\[\gamma_{tr,k}:\mbox{Psh}(\mathrm{Sm}_F)\rightarrow \PST.\] For each $\cX\in Spc_{\bullet}(k)$, we can define the Suslin homology sheaves, with $k$-coefficients, are defined as homology sheaves of the Suslin simplicial complex $C^{Sus}_{\bullet}(\cX)$, which is defined as the simplicial presheaf:

\[U\mapsto \gamma_{tr,k}\cX(\Delta^{\bullet}\times U)\] where $\Delta^{\bullet}$ is the cosimplicial scheme, with $n$-cosimplices $\Delta^n=Spec\mbox{ }F[t_0,t_1,\cdots, t_n]/(\sum t_i-1)$




The canonical map adding transfers induces a morphism between the homology sheaves
\[\mathbf{H}^{\mathbb{A}^1}_{*}(\cX,k)\rightarrow \mathbf{H}^{Sus}_{*}(\cX, k).\]  In general, this map is not an isomorphism. For integral coefficients, by results of Morel \cite[Theorem 6.40]{MorelFabien2012A1at},  $\mathbf{H}^{\mathbb{A}^1}_{*}(\mathbb{G}_m)^{\wedge n}$ is related to Milnor-Witt $K$-theory and by results of Suslin and Voevodsky \cite[Theorem 3.4]{MR1744945} $\mathbf{H}^{Sus}_{*}(\mathbb{G}_m)^{\wedge n}$ is closely related to Milnor $K$-theory. 

On the other hand, by \cite[Corollary 16.2.22]{2009arXiv0912.2110C}  we have an equivalence of symmetric monoidal triangulated categories
\[\mathbf{D}_{\mathbb{A}^1,\acute{e}t}(F,\mathbb{Q})\simeq \mathbf{DM}_{\acute{e}t}(F,\mathbb{Q})\] where  $\mathbf{D}_{\mathbb{A}^1,\acute{e}t}(F,\mathbb{Q})$ is the  \'etale $\mathbb{A}^1$-derived category with $\mathbb{Q}$-coefficients and  $\mathbf{DM}_{\acute{e}t}(F,\mathbb{Q})$ is Voevodsky's triangulated category of motives with $\mathbb{Q}$-coefficients. Then, after taking the version of $\mathbf{H}^{\mathbb{A}^1}_{*}(\cX)$ for the \'etale topology and using $\mathbb{Q}$-coefficients, the $\mathbb{A}^1$-homology is the same as rational Suslin homology.

\bigskip

\it{Main Content:} \rm For $k$ and $F$ perfect fields, we investigate algebraic models for $\mathrm{L}_{\mathbf{H}^{\mathbb{A}^1}\cR}\cH_{\bullet}(F)$  in terms of homotopy categories of coalgebras, where  $\mathbf{H}^{\mathbb{A}^1}\cR$ is the $\mathbb{A}^1$-homology theory of Morel  with coefficients in $k$ and $\mathrm{L}_{\mathbf{H}^{\mathbb{A}^1}\cR}\cH_{\bullet}(F)$ is the localization of the unstable motivic homotopy category with respect to these homology theory. The results are quite parallel to the results of Goerss. The diagonal map $\Delta:\cX\rightarrow \cX\times\cX$ induces a coalgebra structure in $C^{\mathbb{A}^1}_{\bullet}(\cX,\cR)$, which is a presheaf of coalgebras.  By results of  \cite{MR3073905}, we know that   the category of presheaves of simplicial coalgebras   $s\coCAlg(Sm_F)$ is endowed with a left proper, simplicial, cofibrantly generated model category structure, which is left induced from a combinatorial model structure in $s\Mod_{\cR}(Sm_F)$. We extend Goerss' results and we show:
\begin{teo}[\ref{GoerssThm}]\label{GoerssA1thm}
Let $k$ be an algebraically closed field. Then the functor below is fully faithful 
\[\cR^{\delta}[-]:\mathrm{L}_{\mathbf{H}^{\mathbb{A}^1}\cR}\mathrm{Ho}(\mathrm{L}_{mot}s\Psh(Sm_F))\rightarrow \mathrm{Ho}(\mathrm{L}_{mot}\scoCAlg).\]  Furthermore, for every motivic space $\cX$ the derived unit map 
\[X\rightarrow (\cR^{\delta}[\cX]^{fib})^{gp} \] exhibits the target as the $\mathbf{H}^{\mathbb{A}^1}\cR$ localization of $\cX$. 
\end{teo}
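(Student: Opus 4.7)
The plan is to bootstrap from Goerss' classical theorem~\eqref{Goerssthm} by applying it sectionwise over $Sm_F$ and then verifying compatibility with the Nisnevich and $\mathbb{A}^1$ Bousfield localizations. Throughout I will use that $\cR^{\delta}[-]\dashv(-)^{gp}$ is a Quillen adjunction between $\mathrm{L}_{mot}s\Psh(Sm_F)$ and $\mathrm{L}_{mot}\scoCAlg$, established earlier in the paper from the left-induced model structure of~\cite{MR3073905} together with the verification that $\cR^{\delta}[-]$ sends $\LNis$- and $\LA$-acyclic cofibrations to weak equivalences in the motivic coalgebra category.

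The central step is the computation of the derived unit. For a cofibrant motivic space $\cX$, the map $\cX\to(\cR^{\delta}[\cX]^{fib})^{gp}$ evaluated at any $U\in Sm_F$ is precisely the unit of the classical Goerss adjunction at the simplicial set $\cX(U)$, which by~\eqref{Goerssthm} is an $H_*(-,\cR)$-local equivalence. Passing to normalized chains, applying Nisnevich sheafification and $\mathbb{A}^1$-localization preserves this sectionwise statement and produces an isomorphism on $\mathbf{H}^{\mathbb{A}^1}\cR$-sheaves. Hence the derived unit is an $\mathbf{H}^{\mathbb{A}^1}\cR$-equivalence, which by the standard Quillen-adjunction criterion implies that the left derived functor of $\cR^{\delta}[-]$ descends to a fully faithful functor on the Bousfield localization $\mathrm{L}_{\mathbf{H}^{\mathbb{A}^1}\cR}\mathrm{Ho}(\mathrm{L}_{mot}s\Psh(Sm_F))$.

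For the second assertion one must additionally show that $(\cR^{\delta}[\cX]^{fib})^{gp}$ is $\mathbf{H}^{\mathbb{A}^1}\cR$-local. Here the algebraic closedness of $\cR$ enters, as in Goerss' original argument, through the Sweedler structure theorem for pointed irreducible coalgebras: it ensures sectionwise that grouplike elements of a fibrant simplicial coalgebra form an $H_*(-,\cR)$-local space. Coupled with the preservation of fibrant objects by the right Quillen functor $(-)^{gp}$ and the fact that $\mathbf{H}^{\mathbb{A}^1}\cR$-locality is controlled sectionwise after Nisnevich sheafification and $\mathbb{A}^1$-contraction, this yields the locality of the target, so the unit map truly realizes the $\mathbf{H}^{\mathbb{A}^1}\cR$-localization.

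The main obstacle is the interplay between the coalgebra structure and the motivic Bousfield localizations: one must verify that fibrant replacement in $\mathrm{L}_{mot}\scoCAlg$ is compatible with Nisnevich descent and $\mathbb{A}^1$-invariance in a way that survives the passage back along $(-)^{gp}$, so that the sectionwise Goerss statement propagates to a genuine $\mathbf{H}^{\mathbb{A}^1}\cR$-local statement. Once this compatibility is in place, the remainder of the argument follows the formal pattern of the classical proof, now diffused across the category $Sm_F$.
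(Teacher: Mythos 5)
Your central step does not go through as stated. You claim that for cofibrant $\cX$ the derived unit $\cX\to(\cR^{\delta}[\cX]^{fib})^{gp}$, evaluated at $U\in Sm_F$, ``is precisely the unit of the classical Goerss adjunction at the simplicial set $\cX(U)$,'' and you then invoke the classical theorem sectionwise. But the fibrant replacement is taken in $\mathrm{L}_{mot}\scoCAlg$, whose model structure is left-induced from the $\tau$-$\mathbb{A}^1$-local injective structure on $s\Mod_{\cR}(Sm_F)$; its weak equivalences are local and $\mathbb{A}^1$-local, not sectionwise, and its fibrant objects are not (and need not restrict to) Goerss-fibrant simplicial coalgebras section by section. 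So there is no identification of the unit at $U$ with the classical unit at $\cX(U)$, and the subsequent ``sheafify and $\mathbb{A}^1$-localize the sectionwise statement'' step has nothing to propagate. You flag exactly this compatibility as ``the main obstacle'' and then assume it; that obstacle is the entire content of the theorem, and the sectionwise-bootstrap strategy does not supply it.

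The paper's proof avoids ever describing the fibrant replacement. The key lemma is that $(-)^{gp}$ sends an \emph{arbitrary} weak equivalence of presheaves of coalgebras $\alpha:C\to D$ (in the injective, $\tau$-local, or $\tau$-$\mathbb{A}^1$-local sense) to an $\mathbf{H}^{\mathbb{A}^1}\cR$-equivalence. This is where the structure theory enters, and in a different place than in your sketch: by Theorem \ref{etalesplit} the inclusion $\acute{E}t(C)\subset C$ has a unique natural coalgebra retraction, so $\acute{E}t(\alpha)$ is a retract of $\alpha$ and hence a weak equivalence of underlying presheaves of modules; and since $\cR$ is algebraically closed, Proposition \ref{ShvEtgplike} gives $\acute{E}t(C)\cong\cR^{\delta}[C^{gp}]$, so $\cR^{\delta}[\alpha^{gp}]$ is a weak equivalence of modules, i.e.\ $\alpha^{gp}$ is an $\mathbf{H}^{\mathbb{A}^1}\cR$-equivalence. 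Applying this to the trivial cofibration $\cR^{\delta}[\cX]\to\cR^{\delta}[\cX]^{fib}$ and using $\cX=(\cR^{\delta}[\cX])^{gp}$ computes the unit without knowing what the fibrant replacement looks like. Finally, the $\mathbf{H}^{\mathbb{A}^1}\cR$-locality of $(\cR^{\delta}[\cX]^{fib})^{gp}$ is purely formal: $(-)^{gp}$ is right Quillen into the $\mathbf{H}^{\mathbb{A}^1}\cR$-localized model structure, so it carries the fibration $\cR^{\delta}[\cX]^{fib}\to\cR$ to a fibration over the point, and fibrant objects of a left Bousfield localization are local; no sectionwise claim about grouplikes of a fibrant coalgebra is needed, and the one you make would again presuppose a sectionwise description of fibrancy that is not available.
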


If $k$ is non-algebraically closed, we need an auxiliary category: Let $\bar{k}$ be the algebraic closure of a perfect field $k$ and  $G=\mathrm{Gal}(\bar{k}/k)$. We define the category of discrete $G$-motivic spaces as:

\[Spc^{G}_{\bullet}(F):=\mbox{L}_{\mathbb{A}^1}\mbox{L}_{G\times Nis}s\Psh_{inj}(\mathrm{Orb}(G)\times \mathrm{Sm}_F).\] We define the functor \[\bar{k}^{\vee}[-]_G:Spc^{G}_{\bullet}(F)\rightarrow\scoCAlg \] by sending each representable sheaf $G/H$ to the constant simplicial presheaf of coalgebras $(\bar{k}^H)^\vee$ placed on simplicial degree $0$, where $(\bar{k}^H)^\vee$ is the $k$-dual of $\bar{k}^H$.

The functor defined above has a right adjoint $R:\scoCAlg\rightarrow Spc^{G}_{\bullet}(F)$, such that each $RC_n(U)$ indexes all the embeddings of the coalgebra $(\bar{k}^H)^{\vee}$ in $C(U)$, where $H$ runs in all the subgroups $H<G$ of finite index.

\begin{teo}\label{thm2}
Let $k$ be a perfect field. The functor $\cR^{\vee}[-]_G$ induces a fully faithful functor in the homotopy categories:
\[\mathrm{L}\bar\cR^{\vee}[-]_G:\mbox{\rm L}_{\mathbf{H}^{\mathbb{A}^1}\cR}\mathrm{Ho}((Spc^{G}_{\bullet}(F))\rightarrow \mathrm{Ho}((\mathrm{L}_{\mathbb{A}^1}\mathrm{L}_{Nis}s\coCAlg_\cR(\mathrm{Sm}_F)).\]
Furthermore, for every motivic space $\cX$ the derived unit map 
\[\cX\rightarrow R((\bar\cR^{\vee}[\cX]_G)^{fib}) \] exhibits the target as the $\mathbf{H}^{\mathbb{A}^1}\cR$ localization of $\cX$ in discrete $G$-motivic spaces.  

\end{teo}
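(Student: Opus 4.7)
The plan is to reduce the statement to the algebraically closed case, Theorem \ref{GoerssA1thm}, by means of Galois descent for coalgebras, in the spirit of Goerss \cite{MR1363853} in classical topology. The underlying principle is that a simplicial $\cR$-coalgebra is the same data as a simplicial $\bar\cR$-coalgebra equipped with a compatible discrete action of $G = \mathrm{Gal}(\bar\cR/\cR)$. Because the fundamental theorem of coalgebras expresses every coalgebra as a filtered union of its finite-dimensional subcoalgebras, this descent is unconditional and requires no flatness hypothesis.

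The first step is to introduce the auxiliary category $s\mathrm{coCAlg}_{\bar\cR}(Sm_F)^G$ of presheaves of simplicial $\bar\cR$-coalgebras endowed with a compatible discrete $G$-action, equipped with the natural left-induced model structure from \cite{MR3073905}. Galois descent then furnishes a Quillen equivalence between this intermediate category and $\scoCAlg$ whose right adjoint is given by $G$-fixed points. With this in hand, $\bar\cR^\vee[-]_G$ factors as
\[
Spc^G_\bullet(F) \xrightarrow{\bar\cR^\vee[-]} s\mathrm{coCAlg}_{\bar\cR}(Sm_F)^G \xrightarrow{(-)^G} \scoCAlg,
\]
the first arrow sending the orbit $G/H$ to the dual coalgebra $(\bar\cR^{G/H})^\vee$ with its canonical diagonal $G$-action, and $G$-invariants recovering $(\bar\cR^H)^\vee$ through the Galois-theoretic identification $(\bar\cR^{G/H})^G \cong \bar\cR^H$.

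The second step is to invoke Theorem \ref{GoerssA1thm} orbit-by-orbit. For each open subgroup $H < G$, evaluation at $G/H$ yields an ordinary motivic space $\cX^H = \cX(G/H,-)$, and the algebraically closed case applied over $\bar\cR$ controls the resulting $\bar\cR$-coefficient coalgebra. Combining fully faithfulness at each orbit, supplied by Theorem \ref{GoerssA1thm}, with the Galois descent Quillen equivalence gives fully faithfulness of $\mathrm{L}\bar\cR^\vee[-]_G$ on the $\mathbf{H}^{\mathbb{A}^1}\cR$-localised homotopy categories. For the derived unit $\cX \to R((\bar\cR^\vee[\cX]_G)^{fib})$, the same orbit-by-orbit test shows that on $G/H$ it corresponds under descent to the unit of the algebraically closed adjunction applied to $\cX^H$, which by Theorem \ref{GoerssA1thm} exhibits the $\mathbf{H}^{\mathbb{A}^1}\bar\cR$-localisation; assembling compatibly across all open $H$ produces the $\mathbf{H}^{\mathbb{A}^1}\cR$-localisation in $Spc^G_\bullet(F)$.

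The main technical obstacle will be verifying that the pointwise definition of $R$, which indexes embeddings of the coalgebras $(\bar\cR^H)^\vee$ for varying finite-index $H < G$, genuinely models the derived homotopy $G$-fixed points of $(\bar\cR^\vee[\cX])^{fib}$. This is the motivic analogue of the delicate step carried out by Goerss in \cite{MR1320993}: one must check that the Nisnevich--$\mathbb{A}^1$ fibrant replacement in $\scoCAlg$ preserves enough Galois structure so that $((\bar\cR^\vee[\cX])^{fib})^H$ has the correct homotopy type for every open $H$, and that the $\mathbf{H}^{\mathbb{A}^1}\cR$-localisation on discrete $G$-motivic spaces matches the $G$-equivariant $\mathbf{H}^{\mathbb{A}^1}\bar\cR$-localisation on the coalgebra side. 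Once these compatibilities are in place, both fully faithfulness and the derived-unit statement reduce formally to Theorem \ref{GoerssA1thm}.
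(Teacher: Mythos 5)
Your overall strategy --- Galois descent to $\bar\cR$-coalgebras with discrete $G$-action followed by the algebraically closed case --- is not the route the paper takes, and as written it has a genuine gap. The paper proves this theorem (as Theorem \ref{G-Goerss}) directly, by running the same argument as in the algebraically closed case with the adjunction $\bar\cR^{\vee}[-]_G\dashv R$ of Proposition \ref{final} in place of $\cR^{\delta}[-]\dashv(-)^{gp}$: the unit $\cX\to R(\bar\cR^{\vee}[\cX]_G)$ is an isomorphism on the nose, the counit is the inclusion $\acute{E}t(C)\to C$, and the decisive ingredient is Theorem \ref{etalesplit} --- the \emph{natural splitting} of $\acute{E}t(C)\hookrightarrow C$ over a perfect field (proved via the primitive element theorem and Hensel's lemma in Lemma \ref{retract}) --- which makes $\acute{E}t(\alpha)$ a retract of any weak equivalence $\alpha$ of coalgebras and hence shows that $R$ carries \emph{all} weak equivalences to $\mathbf{H}^{\mathbb{A}^1}\cR$-equivalences, with no control of the fibrant replacement required. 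This splitting is exactly the ingredient your proposal never invokes, and without it (or a substitute) you cannot compute the derived unit.

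Two further points in your reduction do not hold as stated. First, the ``orbit-by-orbit'' application of Theorem \ref{GoerssA1thm} is incompatible with the model structure on $Spc^{G}_{\bullet}(F)$: weak equivalences of discrete $G$-objects are detected at the single stalk $\Spec(\bar\cR)$ (equivalently, on the underlying object after forgetting the action), not by evaluation at each orbit $G/H$; the fixed-point functors $(-)^H$ are not homotopy invariant, so ``fully faithfulness at each orbit'' is not a meaningful intermediate statement. Second, you identify the main obstacle as showing that $R$ computes \emph{derived homotopy $G$-fixed points}. That comparison is not part of this theorem at all --- $R$ lands in discrete $G$-motivic spaces and the localization is taken there; the passage back to genuine motivic spaces via homotopy fixed points is precisely the content of Conjecture \ref{QGoerssthm}, which the paper does not prove. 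Your descent factorization of the left adjoint through $G$-equivariant $\bar\cR$-coalgebras could plausibly be made rigorous (it is close in spirit to Goerss's original treatment), but it requires constructing the left-induced model structure on that intermediate category and proving the descent Quillen equivalence, neither of which you supply --- and even then the natural splitting of the \'etale part over a perfect field would still be needed to conclude.
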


Furthermore, for $\mathbb{Q}$ coefficients we expect to have the folowing result:
\begin{conjecture}\label{QGoerssthm}

Let $\mathbf{H}^{\mathbb{A}^1}\mathbb{Q}$ be the rational $\mathbb{A}^1$-homology and $\cX$ an $\mathbb{A}^1$-nilpotent space. Let $\cY$ be the $\mathbf{H}^{\mathbb{A}^1}\mathbb{Q}$-localization of $\cX$ as described in Theorem \ref{thm2}. Then  $Y^{hG}$ is the $\mathbf{H}^{\mathbb{A}^1}\mathbb{Q}$-localization of $\cX$ as a motivic space, where $\cY^{hG}$ is the homotopy fixed point space. 
\end{conjecture}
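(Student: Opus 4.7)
The strategy is to verify that $\cY^{hG}$ satisfies the two defining properties of the $\mathbf{H}^{\mathbb{A}^1}\mathbb{Q}$-localization in $\cH_{\bullet}(F)$: that it is $\mathbf{H}^{\mathbb{A}^1}\mathbb{Q}$-local, and that the natural comparison map $\cX \to \cY^{hG}$ is an $\mathbf{H}^{\mathbb{A}^1}\mathbb{Q}$-equivalence. The basic framework is the base change adjunction $\mathrm{pr}^{*} \dashv (-)^{hG}$ between $\cH_{\bullet}(F)$ and $\mathrm{Ho}(Spc^{G}_{\bullet}(F))$, together with a Galois descent equivalence $\cX \simeq (\mathrm{pr}^{*}\cX)^{hG}$ expected to hold for motivic spaces over the perfect field $k$. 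Given this, the unit map $\cX \to \cY^{hG}$ can be identified with $(\mathrm{pr}^{*}\cX)^{hG} \to \cY^{hG}$, obtained by applying $(-)^{hG}$ to the $G$-equivariant localization map of Theorem \ref{thm2}.

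For the locality step, I would analyse the Bousfield--Kan spectral sequence for homotopy fixed points
\[ E_{2}^{s,t} = H^{s}_{\mathrm{cts}}(G;\, \pi_{t}^{\mathbb{A}^{1}}(\cY)) \Rightarrow \pi_{t-s}^{\mathbb{A}^{1}}(\cY^{hG}), \]
where $\pi_{t}^{\mathbb{A}^{1}}$ denotes the $\mathbb{A}^{1}$-homotopy sheaves with their induced Galois action. Since $\cX$ is $\mathbb{A}^{1}$-nilpotent, the localization $\cY$ remains nilpotent and its homotopy sheaves are $\mathbb{Q}$-vector spaces; continuous cohomology of the profinite group $G$ with such coefficients vanishes in positive degrees at each finite quotient $G/U$, and a limit argument should force the spectral sequence to collapse onto the line $s=0$ rationally. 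Stability of $\mathbf{H}^{\mathbb{A}^{1}}\mathbb{Q}$-locality under Postnikov towers of nilpotent targets then yields locality of $\cY^{hG}$. For the equivalence step, one applies $\mathbf{H}^{\mathbb{A}^{1}}\mathbb{Q}$ to the factored unit map and uses that $\mathrm{pr}^{*}\cX \to \cY$ is an $\mathbf{H}^{\mathbb{A}^{1}}\mathbb{Q}$-equivalence by Theorem \ref{thm2}, together with the compatibility of $\mathbf{H}^{\mathbb{A}^{1}}\mathbb{Q}$ with $(-)^{hG}$ on nilpotent targets, which again follows from the same spectral sequence.

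The main obstacle will be making this last compatibility rigorous: the motivic Postnikov tower interacts nontrivially with both the Nisnevich and $\mathbb{A}^{1}$-localizations, and the profinite nature of $G$ introduces potential $\lim^{1}$ obstructions at each stage of the Bousfield--Kan tower. The $\mathbb{A}^{1}$-nilpotence hypothesis on $\cX$ is exactly what is needed to trivialise these obstructions rationally, but the correct notion of nilpotence in $\cH_{\bullet}(F)$ and its interaction with Galois homotopy fixed points has to be handled with care; this is the technical heart of the argument. A secondary task is to establish the Galois descent equivalence $\cX \simeq (\mathrm{pr}^{*}\cX)^{hG}$ at the unstable motivic level, which should reduce to a descent statement for the Nisnevich topology after base change to $\bar{k}$ but will require justification in the $\mathbb{A}^{1}$-local setting.
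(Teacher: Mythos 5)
The first thing to note is that the paper does not actually prove this statement: it is presented explicitly as a conjecture (``for $\mathbb{Q}$ coefficients we expect to have the following result''), and although the outline claims it is established in the section on discrete $G$-objects, no argument for it appears there. So there is no proof of record to compare yours against, and your proposal should be read as a plan for an open problem rather than as a proof whose correctness can be certified.

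That said, your strategy is the natural one, modelled on Goerss's treatment of homotopy fixed points for profinite Galois actions: rationally, continuous cohomology of a profinite group with coefficients in a discrete $\mathbb{Q}$-vector space vanishes in positive degrees (cohomology of the finite quotients is torsion), so the fixed-point spectral sequence should collapse onto $s=0$. The genuine gaps are exactly the ones you flag, and they are not minor. First, the Galois descent equivalence $\cX\simeq(\mathrm{pr}^{*}\cX)^{hG}$ is not available off the shelf in the unstable $\mathbb{A}^1$-local setting: the paper only constructs the Quillen adjunction between $s\Psh(Sm_F)$ and presheaves on $Sm_F\times G\mbox{-}\Sets_{fd}$ and defines $(-)^{hG}$ as derived fixed points, with no descent statement; note also that the paper itself warns (via the example of $K(\mathbb{F}_p,n)$ over $G=Gal(\bar{\mathbb{F}}_p/\mathbb{F}_p)$) that fibrancy in the discrete $G$-category is delicate. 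Second, the Bousfield--Kan spectral sequence you invoke presupposes a theory of $\mathbb{A}^1$-Postnikov towers equipped with a Galois action, together with a convergence argument controlling the $\lim^1$ terms; none of this is set up in the paper. Third, the paper never defines ``$\mathbb{A}^1$-nilpotent,'' nor does it show that the localization $\cY$ of Theorem \ref{thm2} has $\mathbb{A}^1$-homotopy sheaves that are sheaves of $\mathbb{Q}$-vector spaces, which is the input your vanishing argument needs. Until these three points are supplied with actual proofs, the argument remains a heuristic --- which is presumably why the author leaves the statement as a conjecture.
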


The previous results are not restricted to the  Nisnevich topology. Using the \'etale version of the $\mathbb{A}^1$-homology with rational coefficients $\mathbf{H}^{\mathbb{A}^1}_{*,\acute{e}t}\mathbb{Q} $, we can interpret our results as giving a corresponding localization functor for the rational  Suslin homology localization

In an upcoming project, we plan to study an algebraic model for the Suslin homology localization $\mathbf{H}^{Sus}\cR$.  Our first result is about the local presentability of the category of coalgebras with transfers. More precisely, consider the category of presheaves with transfers $\PST$  (see \cite[Definition 2.1]{MR2242284}), this category is endowed with a monoidal structure, which is given by a Day convolution product. We denote by  $\scoCAlgtr$ the category of coalgebra objects in $\PST$.

\begin{teo}[Corollary  \ref{lpres} and  Theorem\ref{DayFTC}]\label{CoalgDay}
The category of coalgebras  $\coCAlgtr$ is locally presentable, with strong generators given by $\{F\in \coCAlgtr: \#(F)\leq \mbox{max}(\#(C),\aleph_0 )\}$. Furthermore, the underlying functor $U:\coCAlgtr\rightarrow \PST$ has a right adjoint and is comonadic.
\end{teo}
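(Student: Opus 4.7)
The strategy is to reduce the statement to general facts about coalgebras in a locally presentable closed symmetric monoidal category. First I would verify that $\PST$ itself is locally presentable: it is a reflective, accessibly embedded subcategory of the presheaf category on the category of finite correspondences, and presheaf categories are locally presentable. Next I would record that the Day convolution product $\otimes$ preserves small colimits in each variable separately; this is immediate from its construction as a left Kan extension composed with sheafification, both of which are left adjoints.

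The next step is to show that the forgetful functor $U:\coCAlgtr\rightarrow \PST$ creates small colimits. Given a small diagram of coalgebras $(C_i,\Delta_i)$, form $C=\colim C_i$ in $\PST$; because $\otimes$ commutes with colimits in both variables we have a canonical isomorphism $C\otimes C\cong \colim(C_i\otimes C_j)$, and the universal property then produces a unique comultiplication $\Delta:C\to C\otimes C$ making the cocone into a diagram of coalgebras. Counitality and coassociativity are inherited from each $C_i$ by uniqueness. Applying Porst's theorem on categories of comonoids in a locally presentable symmetric monoidal category (equivalently, presenting coalgebra structures as models of a suitable limit sketch and invoking standard accessibility arguments), we conclude that $\coCAlgtr$ is locally presentable.

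The right adjoint to $U$, i.e.\ the cofree coalgebra functor, then exists by the special adjoint functor theorem applied between the locally presentable categories $\coCAlgtr$ and $\PST$, using that $U$ preserves all small colimits. Comonadicity follows from the dual of Beck's theorem: $U$ has a right adjoint, it creates (hence preserves and reflects) the equalizers of $U$-split pairs that are created by colimits, and it reflects isomorphisms — a coalgebra map whose underlying map in $\PST$ is invertible has an inverse that automatically respects the comultiplication, by a short diagram chase using $\Delta$ and the inverse. For the bound on strong generators, the plan is to establish the generalized fundamental theorem of coalgebras: every coalgebra is the filtered colimit of its sub-coalgebras whose underlying object has cardinality at most $\max(\#(C),\aleph_0)$. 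Starting from a sub-object $X\hookrightarrow C$ in $\PST$, one iteratively enlarges $X$ to be closed under $\Delta$ by taking the image of $X\otimes X$ inside $C\otimes C$ and pulling back, and a transfinite cardinality count shows the process stabilises at the claimed size.

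The main obstacle will be precisely this last cardinality estimate. In the classical Sweedler setting over a field, every element generates a \emph{finite-dimensional} sub-coalgebra because the comultiplication of a single element is a finite sum of tensors; transferring this argument to presheaves with transfers requires a careful analysis of how $\Delta$ interacts with sections over each smooth scheme and a uniform bound on the size of the closure under repeated application of $\Delta$. Once this bound is secured, local presentability and comonadicity follow cleanly from the formal package above.
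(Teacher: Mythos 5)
Your formal skeleton --- $U$ creates colimits because the Day convolution is cocontinuous in each variable, the cofree coalgebra then exists by the adjoint functor theorem for locally presentable categories, and comonadicity follows from the dual Beck criterion --- matches what the paper does (it defers the last two points to Barr). The genuine gap is in the generator argument. You propose to close a subobject $X\subset C$ under the comultiplication by ``taking the image of $X\otimes X$ inside $C\otimes C$ and pulling back,'' but this does not produce a coalgebra structure on the enlarged subobject: the canonical map $X\otimes^{Day} X\rightarrow C\otimes^{Day} C$ is in general \emph{not} a monomorphism, because the Day convolution is a coend (a colimit), and colimits do not preserve monomorphisms even over a field. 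Hence $\Delta|_X$, which lands in the image of $X\otimes^{Day}X$ in $C\otimes^{Day}C$, need not lift to a well-defined map $X\rightarrow X\otimes^{Day}X$, and coassociativity would further require $X\otimes^{Day}X\otimes^{Day}X\rightarrow C\otimes^{Day}C\otimes^{Day}C$ to be injective. This is exactly where the classical Sweedler argument (where the tensor product is sectionwise over a field, hence exact) stops transferring, and it is the point you flag only vaguely as ``careful analysis.'' The paper's fix, following Barr, is the notion of a \emph{pure} sub-presheaf: Proposition \ref{pure} enlarges any subobject to one for which $M'\otimes^{Day}N\rightarrow M\otimes^{Day}N$ is injective, with a cardinality bound, and Theorem \ref{lpstronggen} alternates this purification with the invariance closure you describe, passing to the colimit over $\omega$ steps. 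Once purity is in hand the cardinality bookkeeping is routine; without it the construction does not yield a subcoalgebra at all.

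Two smaller points. Invoking a general Porst-type theorem on comonoids in a locally presentable monoidal category would give local presentability in the abstract, but the statement being proved asserts the \emph{specific} set of strong generators $\{F:\#(F)\le\max(\#(C),\aleph_0)\}$, which only comes out of the explicit pure/invariant construction, so the general theorem cannot substitute for it. And your Beck-criterion sentence (``creates the equalizers of $U$-split pairs that are created by colimits'') conflates limits with colimits; the correct mechanism is that a $U$-split pair has an absolute equalizer in $\PST$, which is therefore preserved by $-\otimes^{Day}-$ and so lifts uniquely to $\coCAlgtr$.
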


\bigskip

\it{Outline of this work:}   \rm In  Section \ref{STC}, we recall the structure theory of coalgebras over a field following \cite{sweedler1969hopf} and \cite{Nikolaus}. We discuss the category of presheaves of coalgebras for the Day convolution product, here we prove Theorem \ref{CoalgDay}. In Section\ref{MotSp}, we recall some well-known properties of the category of motivic spaces.  In section \ref{leftcoal}, we discuss the homotopy theory for presheaves of simplicial coalgebras, and rely on ideas from \cite{MR3073905} to study the $\mathbb{A}^1$-localization of $s\coCAlg_\cR(Sm_F)$; here we proved Theorem \ref{GoerssA1thm}.  In section\ref{gmot}, we introduce the notion of $G$-discrete motivic spaces and we study the notion of homotopy fixed points. Here we prove Theorem \ref{thm2} and Theorem \ref{QGoerssthm}.

\it{Acknowledgments.} \rm This work is part of the author's PhD thesis at the University Duisburg-Essen under the supervision of Marc Levine. The author is heartily thankful for his constant encouragement and numerous conversation about this work. The author is very grateful to Jens Hornbostel for careful reading of a draft of this paper and his various remarks and corrections. The author also wants to express her gratitude to Lorenzo Mantovani for helpful conversations about this work and mathematics. This work was support by the DFG Schwerpunkt Programme 1786 Homotopy Theory and Algebraic Geometry.

\section{Structure Theory of Coalgebras}\label{STC}

Let $\cRR$ be a commutative ring and $(\mcM_\cR\Delta_CR,\otimes_\cRR, 1_\mcM)$ an $\cRR$-linear symmetric monoidal category. To an $\cRR$-linear symmetric monoidal category we associate the category of cocommutative, coassociative, counital $\cRR$-coalgebras with respect to the monoidal pairing, we denote this category  as  $\mathrm{coCAlg}(\mcM_\cRR)$. More explicitly an $\cRR$-coalgebra $(C,\Delta_C,\varepsilon_C)$ is an object $C\in \mcM_\cRR$ together with $\cRR$-linear morphisms 
\[\Delta_C: C\rightarrow \cC\otimes_\cRR C\] 
\[\varepsilon_C:C\rightarrow \cRR\] such that the following diagrams commutes:

\bigskip

\bigskip

\begin{minipage}[t]{0.3\textwidth}

\[\xymatrix{
C\ar[r]^-{\Delta_C}\ar[rd]_-{\Delta_C}&C\otimes_\cRR C\ar[d]^-{\tau}\\
&C\otimes_\cRR C
}\]
\centering
\it Cocommutativity
\end{minipage}
\begin{minipage}[t]{0.3\textwidth}

\[\xymatrix{
C\ar[r]^-{\Delta_C}\ar[d]_-{\Delta_C} &C\otimes_\cRR C\ar[d]^-{\Delta_C\otimes 1}\\
C\otimes_\cRR C \ar[r]_-{1\otimes \Delta_C}& C\otimes_\cRR C\otimes_{\cRR} C
}\]
\centering
\it Coassociativity
\end{minipage}
\begin{minipage}[t]{0.3\textwidth}      
        
\[\xymatrix{
&C\ar[dl]_-{\simeq}\ar[d]^{\Delta_C}\ar[dr]^-{\simeq}
&\\
C\otimes_{\cRR} \cRR 
& C\otimes_\cRR C\ar[l]^-{1\otimes \varepsilon_C} \ar[r]_-{\varepsilon_C\otimes 1}
&\cRR\otimes_\cRR C}\] 
\centering
\it Counitality       
\end{minipage}

\bigskip

\bigskip

A morphism between $\cRR$-coalgebras $C,D\in\coCAlgM$ is a $\cRR$-linear morphism that it is compatible with the structure maps $\Delta$ and $\varepsilon$,  \it{i.e} \rm the following diagrams commute:

\bigskip

\begin{minipage}[t]{0.5\textwidth}

\[\xymatrix{
C\ar[r]^-{\varphi}\ar[d]_-{\Delta_C}
&D\ar[d]^-{\Delta_D}\\
C\otimes_\cRR C\ar[r]^-{\varphi\otimes \varphi}
&D\otimes_\cRR D}\]
\end{minipage}
\begin{minipage}[t]{0.5\textwidth}

\[\xymatrix{
C\ar[rr]^-{\varphi}\ar[rd]_-{\varepsilon_C}      &&  D\ar[ld]^-{\varepsilon_D}\\
&k&}\]
\end{minipage}

\begin{example}
\begin{enumerate}

\item For  $\mcM=\Mod_\cRR$ we get  $\mbox{coCAlg}_\cRR=\mbox{coCAlg}({\Mod_\cRR})$

\item \label{psh} Let $\cC$ be a small category and $\mcM=\Mod_\cRR(\cC):=\mbox{Fun}(C^{op}, \,\cRR\mbox{-}\Mod)$, the category of presheaves of modules. This category is endowed with the  \it sectionwise tensor product \rm which induces a symmetric  monoidal structures. Let us denoted by $\coCAlgC$ the category of coalgebras.
\item A particular case of the previous example is formed by considering the category $\Delta$ of finite ordered sets and the section-wise tensor product. This so-called category of simplicial coalgebras is denoted by $s\mbox{coCAlg}_\cR$.

\item Let $\cC$ be a small category and $s\mcM=s\Mod_\cRR(\cC):=\mbox{Fun}(\Delta^{op}, \, \cRR\mbox{-}\Mod(\cC))$, the category of simplicial presheaves of modules we write $s\mbox{coCAlg}_\cRR(\cC)$ for the category of presheaves of simplicial coalgebras.

\end{enumerate}
\end{example}


\subsection{The category of coalgebras over a field $\coCAlg_k$}\label{coalgclassif}
We now discuss the structure of the category of coalgebras over a field $k$. These results are well known, a good reference is \cite{sweedler1969hopf} and Theorem \ref{etalesplit} is proved in \cite{MR1363853} for the case of an algebraically closed field. In \it{loc.cit.} \rm Goerss claims that the theorem is valid for $k$ a perfect field, a proof is given in \cite{Nikolaus}.

First, let us recall the definition of a locally finitely presentable category.
\begin{definition}
Let $\cC$ be a small category. An object $C\in \cC$ is compact or finitely presentable if the representable functor $\cC(C,-)$ preserves filtered colimits.  
\end{definition}

\begin{definition}
A small category $\cC$ is locally finitely presentable if $\cC$ has all small colimits, the subcategory of compact objects $\cC_{fp}$ is essentially small and every object $C\in \cC$ is a filtered colimit of the canonical diagram of the finitely presentable objects mapping into it.
\end{definition}

\begin{proposition}[Fundamental Theorem of coalgebras]\label{FTCsincolim}
Let  $C$ be a coalgebra over a field $k$  and $x\in C$. Then there exists a finite dimensional subcoalgebra $D\subset C$ with $x\in D$.
\end{proposition}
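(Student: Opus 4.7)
The plan is to extract a finite-dimensional subcoalgebra containing $x$ from a rectangular normal form of the iterated comultiplication. First I would compute
$$\Delta^{(2)}(x) := (\Delta \otimes 1)\Delta(x) = (1 \otimes \Delta)\Delta(x) \in C\otimes C\otimes C,$$
and, starting from any finite expansion of this tensor, collapse all linear dependencies among the first and third tensor factors into the middle factor so as to rewrite it in the form
$$\Delta^{(2)}(x) \;=\; \sum_{i=1}^{m}\sum_{j=1}^{n} u_i \otimes w_{ij} \otimes v_j$$
with both $\{u_i\}_i$ and $\{v_j\}_j$ linearly independent in $C$ (routine linear algebra applied to bases of the spans of the outer factors of any initial expansion). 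The candidate subcoalgebra is $D := \mathrm{span}_k\{w_{ij}\} \subseteq C$, which is manifestly finite dimensional. Counitality then immediately gives
$$x = (\varepsilon \otimes \mathrm{id} \otimes \varepsilon)\Delta^{(2)}(x) = \sum_{i,j}\varepsilon(u_i)\varepsilon(v_j)\,w_{ij} \in D.$$

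The substantive step is verifying $\Delta(D) \subseteq D \otimes D$. I would choose linear functionals $f_k,g_l\in C^*$ with $f_k(u_i)=\delta_{ki}$ and $g_l(v_j)=\delta_{lj}$ (any linear extension beyond the finite spans works, so no topology on $C^*$ is needed). Coassociativity says that applying $\Delta$ to each of the three tensor slots of $\Delta^{(2)}(x)$ produces one and the same element $\Delta^{(3)}(x) \in C^{\otimes 4}$, yielding the chain
$$\sum_{i,j}\Delta(u_i)\otimes w_{ij}\otimes v_j \;=\; \sum_{i,j}u_i\otimes \Delta(w_{ij})\otimes v_j \;=\; \sum_{i,j}u_i\otimes w_{ij}\otimes \Delta(v_j).$$
Contracting each of the three expressions with $f_k\otimes\mathrm{id}\otimes\mathrm{id}\otimes g_l$ and using the duality relations, the middle one collapses to $\Delta(w_{kl})$, the leftmost lands in $C\otimes D$, and the rightmost in $D\otimes C$.

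To conclude, I would invoke the standard linear-algebra identity $(D\otimes C)\cap (C\otimes D) = D\otimes D$, valid for any subspace $D$ of a vector space $C$ over a field (choose a complement and expand $C\otimes C$ into four summands), which forces $\Delta(w_{kl})\in D\otimes D$ and hence $\Delta(D)\subseteq D\otimes D$. The main obstacle, to my mind, is the bookkeeping in the normal-form step together with the contractions: one must ensure that the outer families of the chosen expansion really can be made linearly independent while keeping track of how the middle terms $w_{ij}$ are redefined, and then handle the three simultaneous contractions carefully enough that the resulting containments $\Delta(w_{kl})\in C\otimes D$ and $\Delta(w_{kl})\in D\otimes C$ are unambiguous. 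Once this is in hand, the argument is a purely formal manipulation of coassociativity, counitality, and linear duality.
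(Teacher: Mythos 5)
Your proof is correct and follows essentially the same route as the paper: write the iterated comultiplication $\Delta^{(2)}(x)$ in a normal form with linearly independent outer tensor factors and take $D$ to be the span of the middle factors. The paper dismisses the verification that $D$ is a subcoalgebra as ``not difficult''; you actually supply it (the dual-functional contractions and the identity $(D\otimes C)\cap(C\otimes D)=D\otimes D$), and your argument does not even need cocommutativity, which the paper invokes but which is not required.
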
 
\begin{proof}
Given a basis for $C$ we can express $\Delta_C(x)=\sum x_i\otimes c_i$ with $c_i$ elements in the basis, and $(\Delta_C\otimes id)(\Delta_C(x))=\sum\Delta_C(x_i)\otimes c_i=\sum a_j\otimes b_{i,j}\otimes c_i$ with  $a_j$ and $c_j$  linear independent. Then define $D=\langle b_{i,j}\rangle$ as the subspace generated by $b_{i,j}$.  By the properties of the  coassociativity, counitality and  cocommutativity of the coalgebra structure, it is not difficult to prove that $D$ is a subcoalgebra. 
\end{proof}

As consequence of Proposition \ref{FTCsincolim} we have 
\begin{proposition}\label{FTC}
Every coalgebra over a field $\cR$ is the filtered colimit of its finite dimensional sub coalgebras. 
\end{proposition}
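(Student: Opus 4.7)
The plan is to deduce this directly from Proposition \ref{FTCsincolim} by exhibiting $C$ as the colimit of the (large) diagram $\mathcal{F}(C)$ of its finite-dimensional subcoalgebras, ordered by inclusion, with inclusions as morphisms. Three things need to be checked: that $\mathcal{F}(C)$ is filtered, that the underlying vector space of $C$ agrees with the filtered colimit of the $D \in \mathcal{F}(C)$, and that this identification also holds in $\coCAlg_k$ rather than just in $\Mod_k$.

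First I would verify that $\mathcal{F}(C)$ is filtered. Given $D_1, D_2 \in \mathcal{F}(C)$, the sum $D_1 + D_2 \subset C$ is a finite-dimensional subspace, and
\[
\Delta(D_i) \subset D_i \otimes D_i \subset (D_1 + D_2) \otimes (D_1 + D_2),
\]
so $D_1 + D_2$ is again a subcoalgebra containing both. Hence any finite subdiagram has an upper bound, as required. Second, Proposition \ref{FTCsincolim} says that every $x \in C$ lies in some $D \in \mathcal{F}(C)$, so $C = \bigcup_{D \in \mathcal{F}(C)} D$ as a vector space; and directed unions of subobjects compute the filtered colimit in $\Mod_k$, so the canonical comparison map from the vector-space colimit to $C$ is an isomorphism.

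Finally I would upgrade this to a colimit in $\coCAlg_k$. The forgetful functor $U \colon \coCAlg_k \to \Mod_k$ creates filtered colimits: over a field, $- \otimes_k -$ preserves filtered colimits in each variable, so the comultiplications $\Delta_D \colon D \to D \otimes D$ assemble into a unique comultiplication on $\colim D$ that restricts to each $\Delta_D$, and similarly for the counit, making $\colim D$ a coalgebra with the universal property in $\coCAlg_k$. Together with the previous step this gives $C \cong \colim_{D \in \mathcal{F}(C)} D$ in $\coCAlg_k$. The only real subtlety is this last colimit-creation statement; once it is in place the conclusion is immediate from Proposition \ref{FTCsincolim}.
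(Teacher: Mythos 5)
Your proof is correct and follows essentially the same route as the paper: exhibit $C$ as the (filtered) colimit of its finite-dimensional subcoalgebras via Proposition \ref{FTCsincolim}, and observe that the forgetful functor creates this colimit because $-\otimes_k-$ preserves colimits in each variable. The paper simply compresses the filteredness check and the coalgebra-structure-on-the-colimit step into the remark that they are ``straightforward,'' whereas you spell them out.
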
  
\begin{proof}
From \ref{FTCsincolim} we have that the underlying vector space $C$ is the colimit of a diagram $C_\alpha$, where each $C_\alpha$ is a finite dimensional coalgebra. It remains to show that the colimit of a diagram of coalgebras is again a coalgebra. This last statement is straightforward.  \end{proof}

\begin{theorem}\label{cofree}
The category $\mathrm{coCAlg}_k$ for $k$ a field  is finitely presentable and the forgetful functor $U:\mathrm{coCAlg}_k\rightarrow \Mod_k$ has the right adjoint $CF$. For a $k$-vector space $V$, $CF(V)$ is the so-called cofree coalgebra on $V$. 
\end{theorem}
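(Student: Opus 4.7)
The plan is to derive both assertions from the Fundamental Theorem of Coalgebras (Proposition \ref{FTC}) together with general facts about locally presentable categories. The key intermediate step is that the forgetful functor $U:\mathrm{coCAlg}_k\to\Mod_k$ preserves all small colimits; given that, together with Proposition \ref{FTC}, local finite presentability follows formally, and the cofree coalgebra functor is then produced by the adjoint functor theorem.

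First I would verify that $\mathrm{coCAlg}_k$ is cocomplete with $U$ preserving colimits. Given a small diagram $\{C_\alpha\}$ in $\mathrm{coCAlg}_k$, form $C=\colim_\alpha C_\alpha$ in $\Mod_k$. Since $k$ is a field, $-\otimes_k-$ preserves colimits in each variable, so the composites $C_\alpha\to C_\alpha\otimes C_\alpha\to C\otimes C$ and $\varepsilon_\alpha:C_\alpha\to k$ assemble into cocones and induce unique linear maps $\Delta:C\to C\otimes C$ and $\varepsilon:C\to k$ by the universal property. Coassociativity, cocommutativity and counitality are equalities of maps out of $C$ that hold after precomposition with each $C_\alpha\to C$, hence hold by uniqueness. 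The one subtle case is coequalizers: for coalgebra maps $f,g:C'\to D$, a short Sweedler-notation computation using the identity $a\otimes b-c\otimes d=(a-c)\otimes b+c\otimes(b-d)$ shows that $(f-g)(C')\subset D$ is a coideal; counitality of $f$ and $g$ gives $\varepsilon_D((f-g)(C'))=0$, and hence $D/(f-g)(C')$ is the coequalizer in $\mathrm{coCAlg}_k$, computed by $U$.

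Next I would show that every finite dimensional coalgebra $C$ is a compact object of $\mathrm{coCAlg}_k$. For a filtered diagram $\{D_\beta\}$, whose colimit is computed by $U$, a coalgebra map $C\to\colim_\beta D_\beta$ first factors at the level of underlying vector spaces through some $D_{\beta_0}$ by compactness of $C$ in $\Mod_k$, and the finitely many commutative diagrams expressing compatibility with $\Delta$ and $\varepsilon$ are witnessed at some later $\beta\geq\beta_0$, since filtered colimits commute with finite limits in $\Mod_k$. Since finite dimensional coalgebras form a set up to isomorphism and every coalgebra is a filtered colimit of its finite dimensional subcoalgebras by Proposition \ref{FTC}, the category $\mathrm{coCAlg}_k$ is locally finitely presentable.

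Finally, both $\mathrm{coCAlg}_k$ and $\Mod_k$ are locally presentable and $U$ preserves all small colimits, so the adjoint functor theorem for locally presentable categories (for instance Ad\'amek--Rosick\'y, Theorem 1.66) furnishes a right adjoint $CF:\Mod_k\to\mathrm{coCAlg}_k$, which by construction is the cofree coalgebra functor on a vector space. I expect the main technical obstacle to be the coequalizer verification above, since Proposition \ref{FTC} yields only filtered presentations and does not by itself imply that $U$ preserves arbitrary small colimits; once that is handled, the remainder of the argument is a direct application of standard presentability machinery.
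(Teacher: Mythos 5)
Your argument is correct and follows the same overall strategy as the paper: colimits in $\mathrm{coCAlg}_k$ are created by the forgetful functor because $-\otimes_k-$ is cocontinuous in each variable, the Fundamental Theorem of Coalgebras supplies the finite-dimensional coalgebras as a generating family of finitely presentable objects, and an adjoint functor theorem then produces $CF$. The one genuine divergence is in the last step: the paper invokes the dual of the Special Adjoint Functor Theorem, for which the substantive hypothesis to check is that $\mathrm{coCAlg}_k$ is well-copowered (inherited from $\Mod_k$), whereas you invoke the adjoint functor theorem for locally presentable categories, for which cocontinuity of $U$ between locally presentable categories already suffices. Your route is arguably the cleaner one here, since local finite presentability has just been established and no copoweredness argument is needed; the paper's route is more elementary in that it does not lean on the presentability machinery of Ad\'amek--Rosick\'y. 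You also make explicit two points the paper leaves implicit and which are genuinely the places where something could go wrong: that coequalizers of coalgebra maps are computed as quotients by the coideal $(f-g)(C')$ (so $U$ really does preserve all small colimits, not just filtered ones), and that finite-dimensional coalgebras are compact in $\mathrm{coCAlg}_k$ and not merely in $\Mod_k$. Both verifications are carried out correctly.
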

\begin{proof}
Since  $-\otimes-$ preserves colimits, the forgetful functor creates colimits. Then, considering \ref{FTC}, we can conclude that  $\mathrm{coCAlg}_k$ is finite presentable. To prove that the forgetful functor has a right adjoint, we need to apply the dual version of the Special Adjoint Functor Theorem \cite{MacLaneSaunders1998}. It suffices to verify that the category   $\mathrm{coCAlg}_k$ is well-copowered, which means that the collection of quotients is a set. This last condition follows from the fact that $\Mod_K$ is well copowered. Then the forgetful functor has a right adjoint: the cofree algebra functor.
\end{proof}

We have already mentioned the existence of colimits. Since the category of coalgebras is finitely presentable, small  limits also exist, because every finitely presentable subcategory is a reflective subcategory of a category of presheaves \cite[Corollary 1.28]{jiris1994}.

\begin{definition}
A coalgebra $C$ over a field $k$ is called simple if it has no non-trivial subcoalgebras. 
\end{definition}
 Let $K/k$ be a finite field extension. Then $K^{\vee}=\Hom_k(K,k)$ is a finite dimensional coalgebra over $k$, and it is simple because subcoalgebras of $K^{\vee}$ corresponds to quotients of $K$, and only the trivial quotient $K$ exists.
 
 \begin{proposition}
Let $D$ a simple coalgebra over $k$. There exists a finite field extension $K$ over $k$ such that $D\cong K^{\vee}$.
 \end{proposition}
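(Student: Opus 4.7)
The plan is to reduce the question to a statement about the linear dual algebra of $D$, where simplicity translates into the absence of proper nonzero ideals, and then extract $K$ as that dual algebra.

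First, I would observe that $D$ is forced to be finite dimensional over $k$. Indeed, picking any nonzero $x \in D$, Proposition \ref{FTCsincolim} supplies a finite dimensional subcoalgebra $D' \subseteq D$ containing $x$; since $D' \neq 0$ and $D$ is simple, we must have $D' = D$. Next, I would dualize: set $A := \Hom_k(D, k)$. Because $D$ is finite dimensional, the canonical map $A \otimes_k A \to (D \otimes_k D)^\vee$ is an isomorphism, so the comultiplication $\Delta_D$ and counit $\varepsilon_D$ dualize to a multiplication and a unit on $A$. Coassociativity, cocommutativity, and counitality of $D$ translate verbatim into associativity, commutativity, and unitality of $A$, so that $A$ is a finite dimensional commutative unital $k$-algebra.

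Then I would invoke the standard annihilator correspondence: sending a subcoalgebra $E \subseteq D$ to its annihilator $E^\perp = \{f \in A : f|_E = 0\}$ is an inclusion-reversing bijection between subcoalgebras of $D$ and ideals of $A$, with inverse $I \mapsto I^\perp = \{d \in D : f(d) = 0 \text{ for all } f \in I\}$. The reason is that subcoalgebra inclusions $E \hookrightarrow D$ dualize to algebra surjections $A \twoheadrightarrow E^\vee$, and conversely algebra quotients $A \twoheadrightarrow A/I$ dualize back into subcoalgebra inclusions $(A/I)^\vee \hookrightarrow D$ under the canonical identification $D \cong A^\vee$. Simplicity of $D$ therefore forces $A$ to have exactly two ideals, namely $0$ and $A$, and since $A$ is a nonzero commutative unital ring this means $A$ is a field. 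Writing $K := A$, this is a finite extension of $k$ because $\dim_k A = \dim_k D < \infty$.

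Finally, double duality for finite dimensional vector spaces yields $D \cong D^{\vee\vee} = K^\vee$, and this is an isomorphism of coalgebras because the coalgebra structure on $K^\vee$ is defined as the dual of the algebra structure on $K = A = D^\vee$, which was itself the dual of the coalgebra structure on $D$. The main obstacle I anticipate is the bookkeeping in the annihilator correspondence and in the last step: one must be careful that cocommutativity of $D$ really does correspond to commutativity of $A$ and that the identification $D \cong K^\vee$ preserves the comultiplication, not merely the underlying vector space. These are routine verifications once the finite dimensional reduction has been made, so no deeper input beyond Proposition \ref{FTCsincolim} is needed.
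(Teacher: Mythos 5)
Your proof is correct and follows essentially the same route as the paper's: reduce to the finite dimensional case via Proposition \ref{FTCsincolim} and simplicity, dualize to obtain a finite dimensional commutative $k$-algebra with no non-trivial ideals (hence a field $K$), and recover $D\cong K^{\vee}$ by double duality. You simply spell out the annihilator correspondence between subcoalgebras and ideals more explicitly than the paper does.
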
    
\begin{proof}
By Proposition \ref{FTCsincolim} every  non-finite dimensional coalgebra over $k$ contains a non-trivial subcoalgebra. Then every simple subcoalgebra $D$ is finite dimensional, and $D^{\vee}$ is a finite-dimensional commutative algebra which has no non-trivial quotients. Then $D^{\vee}$ has only the trivial ideals, thus it is isomorphic to a field. Then it is a finite field extension of $k$. Furthermore, again since $D$ is finite dimensional, $D\cong D^{\vee\vee}$.
 \end{proof}

\begin{rmk}
In particular if $k$ is algebraically closed there are no simple coalgebras over $k$ besides $k$ itself.
\end{rmk}

\begin{definition}
Let $C$ be a coalgebra over $k$,  the \it \'etale  part $\acute{E}t(C)$  \rm of $C$ is the direct sum $\oplus_{C_{\alpha}\subset C }C_{\alpha}$ where $C_\alpha$ runs through all the simple subcoalgebras of $C$.

A coalgebra $C$ over a field $k$ is called \it{irreducible}  \rm   if it contains a unique simple subcoalgebra, \it{i.e.}   \rm if the \'etale part consists only of one single summand.  A coalgebra is called an \it{irreducible component }\rm if it is a maximal irreducible subcoalgebra of $C$.    
 
\end{definition}

\begin{lemma}\label{sumcoalg}
Let $C=\sum_{i\in I}C_\alpha$ be a (not necessarily direct) sum of subcoalgebras $C_\alpha\subset C$. Any simple subcoalgebra of $C$ lies in one of the summands $C_\alpha$.
\end{lemma}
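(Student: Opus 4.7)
The plan is to reduce to the case $C = C_1 + C_2$ with both $C_1, C_2$ finite-dimensional, and then dualize to an elementary statement about prime ideals in a finite-dimensional commutative algebra. The first observation is that $D$ itself must be finite-dimensional: by Proposition \ref{FTCsincolim}, any nonzero element $x \in D$ sits in a finite-dimensional subcoalgebra $F \subset D$, and simplicity of $D$ forces $F = D$. Picking a finite basis of $D$ and expressing each basis vector as a finite sum of elements drawn from the $C_\alpha$'s shows that $D \subset C_{\alpha_1} + \cdots + C_{\alpha_n}$ for some finite set of indices.

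I would then induct on $n$. For $n = 1$ there is nothing to do. For $n \geq 2$, apply the $n = 2$ case to the decomposition $D \subset C_{\alpha_1} + (C_{\alpha_2} + \cdots + C_{\alpha_n})$, the second summand being again a subcoalgebra, and use the induction hypothesis. So it remains to settle the case $D \subset C_1 + C_2$. Here I would shrink $C_1$ and $C_2$ to finite-dimensional subcoalgebras: each basis element of $D$ decomposes as $a + b$ with $a \in C_1$, $b \in C_2$, and Proposition \ref{FTCsincolim} places each such $a$ and $b$ inside a finite-dimensional subcoalgebra of $C_1$ or $C_2$. Summing these over the finite basis of $D$ produces finite-dimensional subcoalgebras $C_1' \subset C_1$, $C_2' \subset C_2$ with $D \subset C_1' + C_2'$. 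Thus I may assume $C_1$ and $C_2$ are finite-dimensional.

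Finally, I would dualize. Set $E := C_1 + C_2$, a finite-dimensional subcoalgebra of $C$. The surjection of coalgebras $C_1 \oplus C_2 \twoheadrightarrow E$, $(c_1, c_2) \mapsto c_1 + c_2$, dualizes to an injection of finite-dimensional commutative algebras $E^{\vee} \hookrightarrow C_1^{\vee} \times C_2^{\vee}$. Writing $I_i := \ker(E^{\vee} \twoheadrightarrow C_i^{\vee})$, injectivity is precisely $I_1 \cap I_2 = 0$. The inclusion $D \hookrightarrow E$ dualizes to a surjection $E^{\vee} \twoheadrightarrow D^{\vee}$; since $D$ is simple, the preceding proposition identifies $D^{\vee}$ with a finite field extension of $k$, so its kernel $\mathfrak{m}$ is a maximal, hence prime, ideal of $E^{\vee}$. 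Then
\[
I_1 \, I_2 \subset I_1 \cap I_2 = 0 \subset \mathfrak{m},
\]
and primality forces $I_1 \subset \mathfrak{m}$ or $I_2 \subset \mathfrak{m}$, which translates back to $D \subset C_1$ or $D \subset C_2$. The only delicate point in this plan is the bookkeeping across the successive reductions to finite dimensions; the algebraic core is a routine application of primality of maximal ideals in a commutative ring, which is available precisely because the coalgebras in question are cocommutative.
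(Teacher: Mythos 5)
Your proof is correct, but it takes a genuinely different route from the paper's. The paper argues directly on the coalgebra side: assuming $D\not\subset C_{\alpha_1}$, simplicity gives $D\cap C_{\alpha_1}=0$, so one can choose a linear functional $f:C\rightarrow k$ with $f|_D=\varepsilon_D$ and $f|_{C_{\alpha_1}}=0$; applying $f\otimes 1$ to $\Delta_D(d)$ and using $\Delta_D(D)\subset C_{\alpha_1}\otimes C_{\alpha_1}+C_{\alpha_2}\otimes C_{\alpha_2}$ recovers $d$ as an element of $C_{\alpha_2}$. That argument needs no finiteness hypothesis on the two summands and no dualization, so it is shorter and skips your second reduction step entirely (it only uses finite-dimensionality of $D$ to cut down to finitely many summands, exactly as you do). Your version instead pays the bookkeeping cost of shrinking $C_1,C_2$ to finite-dimensional subcoalgebras so that you can dualize $D\hookrightarrow C_1'+C_2'$ and invoke primality of the maximal ideal $\mathfrak{m}=D^{\perp}$ against $I_1I_2\subset I_1\cap I_2=0$; all the individual steps check out ($C_i^{\perp}$ is an ideal because $C_i$ is a subcoalgebra, the sum map $C_1\oplus C_2\twoheadrightarrow E$ is a coalgebra surjection since $\oplus$ is the coproduct of coalgebras, and $I_i\subset\mathfrak{m}$ unwinds to $D\subset C_i$ by double-perpendicularity in finite dimensions). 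What your approach buys is a transparent algebraic explanation of \emph{why} the lemma is true — it is literally primality of maximal ideals — and it makes visible where (co)commutativity enters; what the paper's approach buys is brevity and freedom from the finite-dimensional reductions. One tiny caveat: your closing remark slightly overstates the role of cocommutativity, since maximal two-sided ideals are prime in the two-sided sense even for noncommutative algebras, but in the paper's cocommutative setting this is moot.
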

\begin{proof}
Let $D\subset C$ be a simple coalgebra. Since $D$ is finite dimensional, it lies in the sum of a finite number of summands. By induction on $n$, it suffices to prove that if $D\subset C_{\alpha_1}+ C_{\alpha_2}$ then $D\subset C_{\alpha_1}$ or $D\subset C_{\alpha_2}$. Suppose that $D$ is not contained in $C_{\alpha_1}$. Since $D$ is simple,  then $D\cap C_{\alpha_1}=0$.  We can choose a linear map $f:C\rightarrow k$ such that $f|_D=\varepsilon_D$ and $f|_{C_{\alpha_1}}=0$. Every $d\in D$ satisfies   
\begin{equation}
(f\otimes 1)(\Delta_D(d))= (\varepsilon_D\otimes 1)(\Delta_D(d))=d
\end{equation} but $\Delta_D (D)\subset C_{\alpha_1}\otimes C_{\alpha_1}+C_{\alpha_2}\otimes C_{\alpha_2}$. Since $f|_{C_{\alpha_1}}=0$, we conclude that   for every $d\in D$,  $d\in C_{\alpha_2}$.
\end{proof}

\begin{lemma}\label{Naturalet}
The canonical morphism $\acute{E}t(C)\rightarrow C$ is an injective morphism of coalgebras and in fact it defines an endofunctor:
\[\begin{split}
\acute{E}t:\coCAlg_k&\rightarrow \coCAlg_k \\
                              C&\mapsto \acute{E}t(C)
\end{split}\] such that the canonical morphism is natural.
\end{lemma}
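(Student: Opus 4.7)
The plan is to split the statement into three parts: (i) the sum $\sum_\alpha C_\alpha$ inside $C$, where $C_\alpha$ ranges over simple subcoalgebras, is in fact a direct sum, so that the canonical map $\oplus_\alpha C_\alpha \to C$ is injective; (ii) this map is a morphism of coalgebras, where $\oplus_\alpha C_\alpha$ carries the coproduct coalgebra structure; and (iii) the assignment $C\mapsto \acute{E}t(C)$ is functorial and the inclusion is natural.

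For (i), I would show by induction on $n$ that for any finite collection $C_{\alpha_1},\dots,C_{\alpha_n}$ of pairwise distinct simple subcoalgebras, the sum $C_{\alpha_1}+\cdots+C_{\alpha_n}$ inside $C$ is direct. The induction step reduces to showing $C_{\alpha_n}\cap (C_{\alpha_1}+\cdots+C_{\alpha_{n-1}})=0$. The intersection $C_{\alpha_n}\cap (C_{\alpha_1}+\cdots+C_{\alpha_{n-1}})$ is a subcoalgebra of the simple coalgebra $C_{\alpha_n}$, so it is either $0$ or $C_{\alpha_n}$. If it equals $C_{\alpha_n}$, then $C_{\alpha_n}\subset C_{\alpha_1}+\cdots+C_{\alpha_{n-1}}$, and Lemma \ref{sumcoalg} forces $C_{\alpha_n}\subset C_{\alpha_i}$ for some $i<n$; simplicity of both then yields $C_{\alpha_n}=C_{\alpha_i}$, contradicting our pairwise distinctness. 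Passing to the filtered colimit over finite subsets of the indexing set gives that $\oplus_\alpha C_\alpha\to C$ is injective.

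For (ii), the direct sum $\oplus_\alpha C_\alpha$ inherits a coalgebra structure as the coproduct in $\coCAlg_k$ (comultiplication on each summand $C_\alpha$ followed by the natural inclusion $C_\alpha\otimes C_\alpha\hookrightarrow (\oplus_\beta C_\beta)\otimes(\oplus_\beta C_\beta)$, and similarly for the counit). The canonical map to $C$ is induced by the inclusions $C_\alpha\hookrightarrow C$, each of which is a coalgebra map by definition, so the universal property of the coproduct makes the comparison map $\oplus_\alpha C_\alpha\to C$ a coalgebra morphism.

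For (iii), given $f\colon C\to D$ in $\coCAlg_k$ and a simple subcoalgebra $C_\alpha\subset C$, the image $f(C_\alpha)$ is a subcoalgebra of $D$, and the kernel $\ker(f|_{C_\alpha})$ is a subcoalgebra of $C_\alpha$, hence either $0$ or $C_\alpha$. Thus $f(C_\alpha)$ is either $0$ or isomorphic to $C_\alpha$, and in the latter case it is a simple subcoalgebra of $D$, so it lands in $\acute{E}t(D)$. Summing these assignments over $\alpha$ defines $\acute{E}t(f)\colon \acute{E}t(C)\to \acute{E}t(D)$, and functoriality follows from functoriality of $f\mapsto f(C_\alpha)$. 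Naturality of the inclusion $\acute{E}t(-)\hookrightarrow \mathrm{id}$ is immediate by construction, since on each summand $C_\alpha$ the composite $C_\alpha\hookrightarrow \acute{E}t(C)\xrightarrow{\acute{E}t(f)}\acute{E}t(D)\hookrightarrow D$ agrees with $f|_{C_\alpha}$. The main obstacle is the directness argument in (i); everything else is formal once Lemma \ref{sumcoalg} is in hand.
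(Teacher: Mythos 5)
Parts (i) and (ii) of your plan are correct and essentially coincide with the paper's argument: the directness of $\sum_\alpha C_\alpha$ is proved there by exactly the same appeal to Lemma \ref{sumcoalg}, and your explicit observation that an intersection of subcoalgebras over a field is again a subcoalgebra (so that the intersection with the simple $C_{\alpha_n}$ is $0$ or all of $C_{\alpha_n}$) is a welcome clarification of a step the paper leaves implicit. The gap is in (iii). You assert that $\ker(f|_{C_\alpha})$ is a subcoalgebra of $C_\alpha$, hence $0$ or $C_\alpha$, and conclude that $f(C_\alpha)$ is either $0$ or isomorphic to $C_\alpha$. Neither claim is true. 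The kernel of a coalgebra morphism is a coideal, not a subcoalgebra: if $e\neq 0$ lies in $\ker\varepsilon$ and $\Delta(e)$ lay in $\ker\varepsilon\otimes\ker\varepsilon$, counitality $e=(\varepsilon\otimes 1)\Delta(e)$ would force $e=0$; so already $\ker\varepsilon$ fails to be a subcoalgebra whenever it is nonzero. Concretely, take $C_\alpha=K^{\vee}$ for a nontrivial finite separable extension $K/k$ and $f=\varepsilon_{C_\alpha}\colon K^{\vee}\to k\subset D$ (the counit is always a coalgebra map to the trivial coalgebra). Then $\ker f$ is a nonzero proper coideal and $f(C_\alpha)=k$ is simple but not isomorphic to $K^{\vee}$, so your dichotomy fails, and with it your stated reason that $f(C_\alpha)$ lands in $\acute{E}t(D)$.

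The conclusion you need --- that $f(C_\alpha)$ is a simple subcoalgebra of $D$ --- is nevertheless true, and the paper obtains it by dualizing: $f(C_\alpha)$ is a quotient coalgebra of the simple (hence finite-dimensional) $C_\alpha$, so $f(C_\alpha)^{\vee}$ is a unital finite-dimensional subalgebra of the field $C_\alpha^{\vee}\cong K$; a finite-dimensional unital subalgebra of a field is itself a field, so $f(C_\alpha)^{\vee}$ has no nontrivial quotients and therefore $f(C_\alpha)$ has no nontrivial subcoalgebras. (Note also $f(C_\alpha)\neq 0$, since $\varepsilon_D\circ f=\varepsilon_{C_\alpha}\neq 0$.) Once this substitution is made, the remainder of your (iii) --- assembling $\acute{E}t(f)$ summand by summand and checking naturality of the inclusion on each $C_\alpha$ --- goes through as written.
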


\begin{proof}
We claim that the sum of simple coalgebras $\sum C_\alpha\subset C$ is a direct sum. To prove that we have to show that $C_{\alpha_0}\cap \sum_{\alpha\neq \alpha_0} C_{\alpha}=0$ for every $\alpha\neq 0$.  Suppose that $C_{\alpha_0}\cap \sum_{\alpha\neq \alpha_0} C_{\alpha}\neq0$, since $C_{\alpha_0}$ is a simple coalgebra it follows by Lemma \ref{sumcoalg} that $C_{\alpha_0}=C_{\alpha_1}$ for some $\alpha_1$, which is a contradiction. 

It remains to prove the functoriality and the naturality. We claim that given $f:C\rightarrow D$ a morphism of coalgebras  the image $f(C_{\alpha})\subset D$  is simple for every simple subcoalgebra $C_\alpha\subset C$. Since the image is a quotient of $C_\alpha$,  it suffices to showing that the quotient of a simple coalgebra is simple. This is equivalent to showing that the subalgebra of a finite field extension is finite field extension, which is true. 

\end{proof}

\begin{rmk}\label{diagonalgplike}
Let $\cRR$ be a ring. For a set $X$, we let $\cRR[X]$ denote the free $\cRR$-module on $X$.  The coalgebra $R^{\delta}[X]$ is the $\cRR$-module $\cRR[X]$ with coproduct induced by the diagonal map $\Delta:X\to X\times X$ and the isomorphism $\cRR[X\times X]\cong \cRR[X]\otimes_{\cRR}\cRR[X]$. Note that the coalgebra functor
\[\cRR^{\delta}[-]:Sets\rightarrow \coCAlg_{\cRR}\] admits a right adjoint given by 
\[\xymatrix{(-)^{gp}:\coCAlg_{\cRR}\rightarrow Sets& C\mapsto \mbox{Hom}_{\coCAlg_\cRR}(\cRR,C).}\]  The right adjoint $(-)^{gp}$ can be given more explicitly, a morphism of coalgebras $\cRR\rightarrow C$ sends $1$ to an element $c\in C$ such that $\Delta_C(c)=c\otimes c$ and $\varepsilon_C(c)=1$, such elements are known as \it{group like}  \rm (and throughout the text) elements in $C$. Every group-like element determines a unique morphism  $\cRR\rightarrow C$ of coalgebras. Thus the right adjoint is given by sending the coalgebra $C$ to the subset $C^{gp}$.  
\end{rmk}

\begin{proposition}\label{Etgplike}
Let $k$ be an algebraically closed field and $C$ a coalgebra over $k$. Then the counit of the adjunction $k^{\delta}[C^{gp}]\rightarrow C$ factors through the inclusion $\acute{E}t(C)\subset C$ and induces a natural isomorphism
\[k^{\delta}[C^{gp}]\cong \acute{E}t(C).\] 
\end{proposition}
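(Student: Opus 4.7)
The plan is to exploit the two pieces of structure theory already established: that over an algebraically closed field $k$ the only simple $k$-coalgebra is $k$ itself (Remark after the preceding proposition), and that $\acute{E}t(C)$ is the direct sum of the simple subcoalgebras of $C$ (together with its functoriality from Lemma \ref{Naturalet}). The key bridge is the observation that $1$-dimensional subcoalgebras of $C$ are precisely the lines $k\cdot c$ spanned by group-like elements $c\in C^{gp}$.

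First, I would analyze the counit $\eta_C:k^{\delta}[C^{gp}]\to C$ from Remark \ref{diagonalgplike}. By construction it sends a basis element $[c]$ to $c\in C$, and for each group-like $c$ the one-dimensional subspace $k\cdot c\subset C$ is a subcoalgebra because $\Delta_C(c)=c\otimes c$ and $\varepsilon_C(c)=1$. Since it is $1$-dimensional it is automatically simple. Hence the image of $\eta_C$ is contained in the sum of simple subcoalgebras, i.e.\ in $\acute{E}t(C)$, and the counit factors as $k^{\delta}[C^{gp}]\to \acute{E}t(C)\hookrightarrow C$.

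Next I would show this factorization is an isomorphism onto $\acute{E}t(C)$. Injectivity: distinct group-likes $c\neq c'$ span distinct simple subcoalgebras $k\cdot c\neq k\cdot c'$, and by the direct-sum argument in the proof of Lemma \ref{Naturalet} the sum of distinct simple subcoalgebras is direct, so the $c$'s are linearly independent in $C$. Surjectivity onto $\acute{E}t(C)$: any simple subcoalgebra $D\subset C$ is, by the classification of simple coalgebras over $k$ and the hypothesis that $k$ is algebraically closed, isomorphic to $k^{\vee}\cong k$. Hence $D$ is $1$-dimensional, and if $0\neq d\in D$ then $\Delta_C(d)=\lambda\, d\otimes d$ for some $\lambda\in k$ with $\varepsilon_C(d)\lambda d=d$, whence $d':=\varepsilon_C(d)\,d$ is group-like and $D=k\cdot d'$ lies in the image of $\eta_C$. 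Combining these two points, $\eta_C$ induces the desired isomorphism $k^{\delta}[C^{gp}]\xrightarrow{\sim}\acute{E}t(C)$.

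Finally, for naturality, any coalgebra map $f:C\to D$ sends group-like elements to group-like elements (directly from $\Delta_D(f(c))=(f\otimes f)\Delta_C(c)=f(c)\otimes f(c)$ and $\varepsilon_D(f(c))=\varepsilon_C(c)=1$), giving functoriality of $C\mapsto k^{\delta}[C^{gp}]$, and the square with $\acute{E}t(f)$ commutes by the construction of the counit. The main obstacle, if any, will be the surjectivity step of the second paragraph: one has to use both the classification of simple coalgebras over $k$ in terms of finite field extensions and the algebraic closure hypothesis to reduce every simple subcoalgebra to one spanned by a group-like. All other parts are formal.
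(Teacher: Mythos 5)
Your proof is correct and takes essentially the same route as the paper's: both boil down to the observation that, over an algebraically closed field, the simple subcoalgebras of $C$ are exactly the lines spanned by group-like elements, so that $\acute{E}t(C)=\bigoplus_{c\in C^{gp}}k\cdot c=k^{\delta}[C^{gp}]$ (the paper obtains the factorization through $\acute{E}t(C)$ from the naturality in Lemma \ref{Naturalet} together with $\acute{E}t(k^{\delta}[X])=k^{\delta}[X]$, whereas you verify it directly on group-likes; the difference is cosmetic). One tiny normalization slip in your surjectivity step: writing $\Delta_C(d)=\lambda\, d\otimes d$ with $\varepsilon_C(d)\lambda=1$, the group-like element spanning $D$ is $d'=\lambda d=d/\varepsilon_C(d)$ rather than $\varepsilon_C(d)\,d$ -- harmless, since all that is needed is that \emph{some} group-like spans $D$.
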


\begin{proof}
Let $X$ be a set and consider $k^{\delta}[X]=\sum_{X}k$ with  the coalgebra structure induced by the diagonal map. We have that $\acute{E}t(k^{\delta}[X])=k^{\delta}[X]$.  Then by Lemma \ref{Naturalet} the counit map $k^{\delta}[C^{gp}]\rightarrow C$ factors through the \'etale part of $\acute{E}t(C)$. Now since $k$ is algebraically closed a simple subcoalgebra of $C$ is given by $k$ and then the \'etale part is given by the direct sum over all morphisms of coalgebras $k\rightarrow C$, which is the description of the group like elements.    
\end{proof}

\begin{lemma}\label{dirsumic}
Every coalgebra over a field $k$ is the direct sum of its irreducible components. In other words for every simple coalgebra $C_{\alpha}$ there is a unique irreducible component $\overline{C}_\alpha\subset C$  such that $C_\alpha\subset  \overline{C}_\alpha$ and the canonical morphism 
$$\bigoplus_{\alpha} \overline{C}_\alpha\rightarrow C$$ is an isomorphism of coalgebras.   
\end{lemma}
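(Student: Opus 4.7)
The plan is to construct the irreducible components $\overline{C}_\alpha$ via Zorn's lemma, then to show they are pairwise disjoint, and finally to show that their direct sum is all of $C$. A useful preliminary, valid because we work over a field, is that a basis argument gives $(D_1\otimes D_1)\cap(D_2\otimes D_2)=(D_1\cap D_2)\otimes(D_1\cap D_2)$ inside $C\otimes C$ for any two subcoalgebras $D_1,D_2\subset C$, so that $D_1\cap D_2$ is itself a subcoalgebra. For the construction, I would fix a simple subcoalgebra $C_\alpha$ and consider the poset $\mathcal{F}_\alpha$ of irreducible subcoalgebras of $C$ whose unique simple subcoalgebra is $C_\alpha$. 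For any chain in $\mathcal{F}_\alpha$, its union is again a subcoalgebra of $C$, and by Lemma \ref{sumcoalg} every simple subcoalgebra of this union is contained in some member of the chain and therefore equals $C_\alpha$; hence the union belongs to $\mathcal{F}_\alpha$, and Zorn's lemma produces a maximal element $\overline{C}_\alpha$.

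Next, for $\alpha\neq\beta$ I would show $\overline{C}_\alpha\cap\overline{C}_\beta=0$. Otherwise, any nonzero element of the intersection would lie in a finite-dimensional subcoalgebra by Proposition \ref{FTCsincolim}, and a dimension induction would produce a simple subcoalgebra inside $\overline{C}_\alpha\cap\overline{C}_\beta$; but such a simple subcoalgebra would have to equal both $C_\alpha$ and $C_\beta$, a contradiction. Consequently, the sum $\sum_\alpha\overline{C}_\alpha$ is direct. To see that it equals $C$, Proposition \ref{FTC} reduces the claim to showing that every finite-dimensional subcoalgebra $D\subset C$ is contained in $\sum_\alpha\overline{C}_\alpha$. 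Dualising, $D^\vee$ is a finite-dimensional commutative $k$-algebra, which by the structure theorem for commutative Artinian rings decomposes as a finite product $D^\vee\cong\prod_j A_j$ of local Artin algebras; dualising back yields $D\cong\bigoplus_j A_j^\vee$ with each $A_j^\vee$ an irreducible subcoalgebra of $C$ having a unique simple subcoalgebra $C_{\alpha(j)}$. Then the subcoalgebra $A_j^\vee+\overline{C}_{\alpha(j)}\subset C$ has all of its simple subcoalgebras equal to $C_{\alpha(j)}$ by Lemma \ref{sumcoalg}, so it is irreducible; maximality of $\overline{C}_{\alpha(j)}$ forces $A_j^\vee\subset\overline{C}_{\alpha(j)}$, and summing over $j$ gives $D\subset\sum_\alpha\overline{C}_\alpha$.

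The hard part is the last step, because the $\overline{C}_\alpha$ are constructed one simple subcoalgebra at a time while the conclusion is a global decomposition of $C$. The bridge is the fundamental theorem of coalgebras (Proposition \ref{FTC}), which reduces the global statement to the finite-dimensional case; there, linear duality translates the problem into the familiar decomposition of a finite-dimensional commutative algebra into its local factors, at which point everything falls into place.
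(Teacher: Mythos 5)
Your overall strategy matches the paper's: build maximal irreducible components, show their sum is direct, and prove the sum exhausts $C$ by reducing to finite-dimensional subcoalgebras via Proposition \ref{FTC} and dualising to the decomposition of a commutative Artinian algebra into local factors. Your treatment of the last step is in fact slightly more careful than the paper's, since you explicitly use maximality to place each local factor $A_j^\vee$ inside the corresponding component $\overline{C}_{\alpha(j)}$.

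There is, however, a genuine logical gap in the directness step. You prove only that $\overline{C}_\alpha\cap\overline{C}_\beta=0$ for $\alpha\neq\beta$ and then conclude ``consequently, the sum $\sum_\alpha\overline{C}_\alpha$ is direct.'' Pairwise trivial intersection does not imply directness of a sum of more than two subspaces (three distinct lines through the origin in $k^2$ meet pairwise in $0$ but do not sum directly). What you must show is $\overline{C}_{\alpha_0}\cap\sum_{\alpha\neq\alpha_0}\overline{C}_\alpha=0$ for every $\alpha_0$, and this is where Lemma \ref{sumcoalg} is indispensable: a nonzero intersection would contain a simple subcoalgebra, which equals $C_{\alpha_0}$ on one side and, by Lemma \ref{sumcoalg} applied to the sum $\sum_{\alpha\neq\alpha_0}\overline{C}_\alpha$, lies in some single $\overline{C}_\alpha$ with $\alpha\neq\alpha_0$ and hence equals $C_\alpha$ --- a contradiction. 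This is exactly the argument the paper gives; your pairwise version never invokes Lemma \ref{sumcoalg} at the point where it is actually needed. A second, smaller omission: the lemma asserts \emph{uniqueness} of the irreducible component containing $C_\alpha$, which your Zorn's lemma construction does not deliver by itself; either take $\overline{C}_\alpha$ to be the sum of \emph{all} irreducible subcoalgebras containing $C_\alpha$ (as the paper does, checking via Lemma \ref{sumcoalg} that this sum is still irreducible), or note that two maximal elements of your poset $\mathcal{F}_\alpha$ have irreducible sum and so coincide.
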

\begin{proof}
The sum of all irreducible coalgebras which contains $C_\alpha$ is also irreducible: if another simple subcoalgebra $C_\beta$ is a subcoalgebra of this sum, then it is contained in one of the summands but this is a contradiction because each summand is an irreducible containing  $C_\alpha$.  By construction the sum contains $C_\alpha$ and is maximal then is an irreducible component. 

We show now that he sum of the irreducible components is a direct sum. Suppose  that  there is a non-trivial intersection $\overline{C}_{\alpha_0}\cap \sum_{\alpha\neq \alpha_0}\overline{C}_{\alpha}$ for some $\alpha_0$. Then this intersection contains a simple subcoalgebra which has to be $\overline{C}_{\alpha_0}$, since it is a subcoalgebra of  $\overline{C}_{\alpha_0}$, but again  Lemma \ref{sumcoalg} shows that  $C_{\alpha_0}$ is a subcoalgebra of  $C_{\alpha}$ for some $\alpha\neq \alpha_0$ which is a contradiction.  

We have that $\sum_{\alpha}C_\alpha\subset C$, then it is enough to show that each element $c\in C$ lies in a sum of irreducible coalgebras. Take $\overline{\{c\}}$ the subcoalgebra generated by $c$, which is finite dimensional by the fundamental theorem of coalgebras.  The $A=C^{\vee}$ is an artinian algebra and we have that $A\cong A_1\oplus\cdots\oplus A_n$  with each $A_i$ a local artinian subalgebra, but since $A_i$ is local  $A_i^{\vee}$ is irreducible.  
\end{proof}

\begin{lemma}\label{funic}
Let $f: C\rightarrow D$ be a morphism of coalgebras over a field $k$. Then $f$ restricts to a morphism of irreducible components $\overline{C_\alpha}\rightarrow \overline{f(C_\alpha)}$ for every simple subcoalgebra $C_{\alpha}$ of $C$.
\end{lemma}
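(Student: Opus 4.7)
The plan is to reduce the claim to showing that $f(\overline{C_\alpha})$ is itself irreducible. Once this is established, $f(\overline{C_\alpha})$ is an irreducible subcoalgebra of $D$ containing the simple subcoalgebra $f(C_\alpha)$; the latter is non-zero because the counit axiom forces $\varepsilon_D\circ f|_{C_\alpha}=\varepsilon_C|_{C_\alpha}\neq 0$, and it is simple by Lemma \ref{Naturalet}. The maximality in Lemma \ref{dirsumic} then forces $f(\overline{C_\alpha})\subseteq\overline{f(C_\alpha)}$, which is exactly the desired conclusion.

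The heart of the argument is therefore to show that the image of an irreducible coalgebra is irreducible. First I would reduce to the finite-dimensional case. By Proposition \ref{FTCsincolim}, $\overline{C_\alpha}$ is the filtered union of its finite-dimensional subcoalgebras $F$, each of which is automatically irreducible: any simple subcoalgebra of $F$ is also simple in $\overline{C_\alpha}$ and hence equals $C_\alpha$. Consequently $f(\overline{C_\alpha})=\bigcup_F f(F)$ is a filtered union of finite-dimensional subcoalgebras, and any simple subcoalgebra of $f(\overline{C_\alpha})$ is itself finite-dimensional and hence contained in some $f(F)$. It therefore suffices to show that each $f(F)$ is irreducible with unique simple subcoalgebra $f(C_\alpha)$.

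The finite-dimensional statement---that a surjective coalgebra map $\tilde{f}\colon F\twoheadrightarrow G$ with $F$ finite-dimensional irreducible has irreducible target---is the main obstacle, and I would handle it by passing to duals. The dual $F^{\vee}$ is a finite-dimensional commutative local $k$-algebra, and $G^{\vee}$ embeds as a $k$-subalgebra of $F^{\vee}$, so the problem reduces to the following elementary commutative-algebra fact: any subalgebra $B$ of a finite-dimensional local $k$-algebra $A$ with maximal ideal $\mathfrak{m}$ is again local. Since $\mathfrak{m}$ is nilpotent, $B\cap\mathfrak{m}$ is a nilpotent ideal of $B$ and hence lies inside every prime of $B$; conversely $B/(B\cap\mathfrak{m})$ injects into the residue field extension $A/\mathfrak{m}$, and a finite-dimensional $k$-subalgebra of a field is itself a field. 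Hence $B\cap\mathfrak{m}$ is the unique maximal ideal of $B$. This is the step where the absence of an algebraic-closure hypothesis is felt: $A/\mathfrak{m}$ is only a finite field extension of $k$, but the argument is insensitive to this subtlety. Dualizing back, $G^{\vee}$ local means $G=\tilde{f}(F)$ is irreducible, which completes the reduction and the proof.
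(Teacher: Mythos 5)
Your proof is correct and follows essentially the same route as the paper's: reduce to showing that the image of an irreducible coalgebra is irreducible, pass to the finite-dimensional case via the Fundamental Theorem of Coalgebras, and dualize to the statement that a (unital) subalgebra of a finite-dimensional local $k$-algebra is local. Your filtered-union argument for the reduction to finite dimensions is a slightly more direct packaging of what the paper does with its ``Claim 1'', and your subalgebra-of-a-local-algebra formulation cleanly replaces the paper's Jacobson-radical/$\mathfrak{n}^{\perp}$ computation, but the underlying ideas coincide.
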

\begin{proof}
We first prove the result in case $C$ is irreducible and $f$ is surjective. Let us show  that $D$ is irreducible. By Proposition \ref{FTC}            $C=\colim_{\alpha}C_{\alpha}$  where $C_\alpha$ runs over all the finite dimensional subcoalgebras and thus $D=\colim_{\alpha}f(C_{\alpha})$.

\it{Claim 1:}  \rm  A coalgebra $D$ is irreducible if and only if every element lies in some irreducible subcoalgebra.

\it{Claim 2:} \rm  Let  $C$ be an irreducible finite dimensional coalgebra and $f:C\rightarrow D$ a surjective morphism of coalgebras. Let $C_0$ be the unique simple subcoalgebra of $C$. Then $f(C_0)$ is the unique simple subcoalgebra of $D$.

Granting Claim 1 we can assume that $C$ is finite dimensional and $f$ surjective, then the proposition  for $C$ irreducible and $f$ surjective follows from Claim 2.

\it{Proof of claim 1:}
\rm One direction is obvious. For the converse suppose that $D$ is not irreducible. Then there exists $E$ and $E'$ two distinct irreducible components. We know that $E+ E'$ is a direct sum. Choose $e\in E$ and $e'\in E'$ and consider $e+e'\in E\oplus E'$. Let $F$ be  the subcoalgebra generated by $e+e'$. By hypothesis  $F$ must be contained in an irreducible subcoalgebra, so it is irreducible as well. Let $E_0$ be the simple subcoalgebra of $E$, $E'_0$ the simple subcoalgebra of $E'$, $F_0$ the simple subcoalgebra of $F$. Since, $F$ is contained in $E\oplus E'$, $F_0$ is either $E_0$ or $E'_0$, say $F_0=E_0$. Then $E+F$ is also irreducible and by maximality, $E+F=E$ so $F\subset E$. Thus $e+e'$ is in $E$ and thus $e'$ is in $E$. But then the subcoalgebra generated by $e'$ is contained in $E\cap E'$ and must contain both $E_0$ and $E'_0$, so $E'_0\subset E$, contrary to the assumption that $D$ is irreducible.

\it{Proof of claim2:}
\rm

The exact sequence of coalgebras  $C\rightarrow D\rightarrow 0$ induces an exact sequence of finite dimensional algebras $0\rightarrow D^{\vee}\rightarrow C^{\vee}$. Since $C$ is irreducible $C^{\vee}$ is a finite dimensional local algebra. Let $\mathfrak{m}$ the maximal ideal. By the Nakayama Lemma there exists $n\in \mathbb{N}$ such that $\mathfrak{m}^n=0$. Denote  $\mathfrak{n}= D^{\vee}\cap \mathfrak{m}$, it is an ideal such that $\mathfrak{n}^n=0$, thus lies in the Jacobson radical of $D^{\vee}$. And all maximal ideals of $D^{\vee}$ contain $\mathfrak{n}$. Then all simple subcoalgebras of $D$ are contained in $\mathfrak{n}^{\perp}$. But an explicit computation shows that $\mathfrak{n}^{\perp}= D^{\vee}\cap C_0^{\perp}=f(C_0)$ where $C_0$ is the unique simple subcoalgebra of $C$. Since $f(C_0)$ is a simple subcoalgebra  of $D$ it follows that is the only one. Thus $D$ is irreducible.

We now discuss the general case. By the first part we have that $f(\overline{C_\alpha})$ is irreducible, then it lies in an irreducible component which contains $f(C_\alpha)$. But the former is a simple subcoalgebra. Thus $f(\overline{C_\alpha})\subset \overline{f(C_\alpha)}$.

\end{proof}

\begin{corollary}
Let $k$  be a field. The decomposition of Lemma \ref{dirsumic} is functorial.
\end{corollary}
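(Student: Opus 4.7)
The plan is to show that any coalgebra morphism $f\colon C\to D$ is automatically compatible with the direct sum decompositions of Lemma~\ref{dirsumic}, so that sending $f$ to the family of its restrictions to the irreducible components gives the sought functorial refinement. Concretely, I will exhibit $f$ as $\bigoplus_\alpha f_\alpha$, where each $f_\alpha\colon \overline{C_\alpha}\to \overline{D_{\beta(\alpha)}}$ is determined by the rule $C_\alpha\mapsto f(C_\alpha)$ at the level of simple subcoalgebras.

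First I would invoke Lemma~\ref{dirsumic} to fix decompositions $C=\bigoplus_\alpha \overline{C_\alpha}$ and $D=\bigoplus_\beta \overline{D_\beta}$ indexed by simple subcoalgebras. Next, for each simple subcoalgebra $C_\alpha\subset C$, I would analyse $f|_{C_\alpha}$: since coalgebra maps preserve counits, $\varepsilon_D\circ f|_{C_\alpha}=\varepsilon_{C_\alpha}\neq 0$, so $f|_{C_\alpha}$ is not zero; its vector-space kernel is a coideal of the simple coalgebra $C_\alpha$, hence trivial, so $f(C_\alpha)$ is a simple subcoalgebra of $D$ isomorphic to $C_\alpha$. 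This identifies $f(C_\alpha)$ with some simple summand index $\beta(\alpha)$ of $D$.

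Then I would apply Lemma~\ref{funic} to conclude $f(\overline{C_\alpha})\subset \overline{f(C_\alpha)}$, and use the uniqueness of the irreducible component containing a given simple subcoalgebra (Lemma~\ref{dirsumic}) to identify $\overline{f(C_\alpha)}=\overline{D_{\beta(\alpha)}}$. Thus $f$ restricts to a coalgebra morphism $f_\alpha\colon \overline{C_\alpha}\to \overline{D_{\beta(\alpha)}}$ for every $\alpha$. By the universal property of the coproduct applied to the isomorphism $C\cong\bigoplus_\alpha \overline{C_\alpha}$, the collection $\{f_\alpha\}$ determines a unique map out of $C$, which must coincide with $f$; this exhibits $f$ as $\bigoplus_\alpha f_\alpha$. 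Functoriality with respect to composition is then automatic, because the formation $f\mapsto\{f_\alpha\}$ is nothing more than restriction to invariant direct summands.

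The main obstacle is the step where one must guarantee that $f(C_\alpha)$ is nonzero and simple, so that it genuinely corresponds to one of the summands indexing the decomposition of $D$ (rather than disappearing or spilling across several components). This is handled by the counit argument above combined with the statement, already proved inside Lemma~\ref{Naturalet}, that images of simple subcoalgebras under coalgebra morphisms remain simple. Once this is in place, the rest of the argument is essentially bookkeeping using the universal property of the direct sum.
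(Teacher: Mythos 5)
Your proof follows the paper's own route: the corollary is obtained by combining Lemma~\ref{dirsumic} with Lemma~\ref{funic}, restricting $f$ to each irreducible component, and the conclusion is then bookkeeping with the direct sum. One sub-argument, however, is incorrect: you claim that the kernel of $f|_{C_\alpha}$ is a coideal of the simple coalgebra $C_\alpha$ and hence trivial, so that $f(C_\alpha)\cong C_\alpha$. Simplicity rules out nontrivial \emph{subcoalgebras}, not nontrivial coideals; for instance, if $C_\alpha=K^{\vee}$ with $K/k$ a nontrivial finite extension, the counit $K^{\vee}\to k$ is a surjective coalgebra map with nonzero kernel, so $f(C_\alpha)$ can be a proper quotient of $C_\alpha$. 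Fortunately the isomorphism is not needed: all the argument requires is that $f(C_\alpha)$ is a (nonzero) simple subcoalgebra of $D$, which follows from the fact, established in the proof of Lemma~\ref{Naturalet} and which you also invoke, that a quotient of a simple coalgebra is simple (dually, a $k$-subalgebra of a finite field extension is a field). With that justification substituted for the coideal claim, the proof is correct and agrees with the paper's.
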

\begin{proof}
Let $f:C\rightarrow D$ be a morphism of coalgebras. Combining Lemma \ref{dirsumic} and Lemma \ref{funic} we get:
\[\xymatrix{\bigoplus_{\alpha} \overline{C_{\alpha}}\ar[r]^-{\cong} \ar[d]_-{f_{*}}&C\ar[d]^{f} \\
                  \bigoplus_{\alpha}\overline{D_{\alpha}}\ar^-{\cong}[r] & D
                  }\] where $f_{*}$ on the summand $\overline{C_{\alpha}}$ agrees with $f|_{\overline{C_{\alpha}}}$.
\end{proof}
\begin{lemma}\label{retract}

Let $C$ be an irreducible coalgebra over a perfect field. Then there is a unique retract of the inclusion $\acute{E}t(C)\subset C$ that is a map of coalgebras. Moreover, the retraction is natural in $C$.
\end{lemma}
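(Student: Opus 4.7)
Since $C$ is irreducible, $D:=\acute{E}t(C)$ is a single simple subcoalgebra and by the classification of simple $k$-coalgebras $D\cong K^{\vee}$ for some finite field extension $K/k$; because $k$ is perfect, $K/k$ is separable. The strategy is to reduce to the finite-dimensional situation via Proposition \ref{FTC}, dualize, and apply the classical Wedderburn--Malcev lifting theorem for separable residue extensions.

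First, I write $C$ as a filtered colimit of those finite-dimensional subcoalgebras $C_{\alpha}\subset C$ that contain $D$; these are cofinal among all finite-dimensional subcoalgebras because, given any finite-dimensional $C'\subset C$, the sum $C'+D$ is finite-dimensional and still a subcoalgebra. Each such $C_{\alpha}$ is irreducible with unique simple subcoalgebra $D$, so $A_{\alpha}:=C_{\alpha}^{\vee}$ is a finite-dimensional local $k$-algebra with residue field $A_{\alpha}/\mathfrak{m}_{\alpha}=K$, where the quotient map $\pi_{\alpha}:A_{\alpha}\to K$ is exactly dual to the inclusion $D\hookrightarrow C_{\alpha}$. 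A coalgebra retraction of $D\hookrightarrow C_{\alpha}$ is the same data as a $k$-algebra section of $\pi_{\alpha}$.

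Separability of $K/k$ gives both existence and uniqueness of such a section $s_{\alpha}:K\to A_{\alpha}$. Existence follows from Wedderburn--Malcev applied to the nilpotent ideal $\mathfrak{m}_{\alpha}$, or equivalently from Hensel's lemma: if $\theta\in K$ is a primitive element with separable minimal polynomial $f\in k[x]$, then any preimage $\tilde\theta\in A_{\alpha}$ of $\theta$ under $\pi_{\alpha}$ satisfies $f(\tilde\theta)\in\mathfrak{m}_{\alpha}$ and $f'(\tilde\theta)$ is a unit, so $f$ has a unique root in $A_{\alpha}$ lifting $\theta$. Uniqueness stems from $\Omega^{1}_{K/k}=0$: the difference of two sections is a $k$-linear map $K\to\mathfrak{m}_{\alpha}$ that is forced, by multiplicativity, to be a $k$-derivation, hence zero. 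Dualizing, $r_{\alpha}:=s_{\alpha}^{\vee}:C_{\alpha}\to K^{\vee}=D$ is the unique coalgebra retraction. By uniqueness, the $r_{\alpha}$ are compatible with inclusions $C_{\alpha}\subset C_{\beta}$, so they assemble into a coalgebra retraction $r:C\to D$; any other retraction must restrict to $r_{\alpha}$ on each $C_{\alpha}$ and therefore coincides with $r$, proving uniqueness.

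For naturality, given $f:C\to C'$ of irreducible coalgebras with retractions $r,r'$ onto $D=\acute{E}t(C)$ and $D'=\acute{E}t(C')$, Lemma \ref{funic} supplies $f(D)\subset D'$, so $f|_{D}$ is defined. I check the identity $r'\circ f=(f|_{D})\circ r$ after restricting to each $C_{\alpha}\supset D$: dualizing, both sides become $k$-algebra maps $K':=(D')^{\vee}\to A_{\alpha}$ whose composition with $\pi_{\alpha}:A_{\alpha}\to K$ equals $(f|_{D})^{\vee}:K'\to K$; the same Wedderburn--Malcev uniqueness used in the previous step shows there is at most one such lift, so the two maps agree. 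The main obstacle is the separable lifting in the second paragraph, and this is precisely where the perfectness of $k$ is indispensable.
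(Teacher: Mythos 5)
Your proof is correct and follows essentially the same route as the paper: reduce to finite-dimensional irreducible subcoalgebras via the fundamental theorem of coalgebras, dualize to a finite local $k$-algebra with separable residue field (using perfectness), and obtain existence and uniqueness of the section from Hensel's lemma applied to the separable minimal polynomial of a primitive element. Your naturality check is organized slightly differently (a direct uniqueness-of-lifts argument on the dual side, rather than the paper's epi--mono factorization), but it rests on the same uniqueness mechanism and is valid; only the aside that the difference of two sections is literally a $k$-derivation is imprecise (it is a twisted derivation), though your Hensel uniqueness argument already covers this.
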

\begin{proof}
Since $C$ is an irreducible coalgebra, $\acute{E}t(C)=K^\vee$ for $K$ a finite field extension of $k$. By the Fundamental Theorem of Coalgebras $C=\colim_i C_i$ where  $C_i\subset C$ are finite dimensional subcoalgebras.  Every $C_i$ is an irreducible coalgebra and contains $\acute{E}t(C)$. Thus, it is enough to show the Lemma for the finite dimensional case. We reduce to show the dual statement: Let $A$ be a  finite dimensional local algebra  over a perfect field $k$, with $\mathfrak{m}$ the unique maximal ideal. Then there exists a unique  subfield $K\subset A$ such that $A= K\oplus \mathfrak{m}$ 

Since $k$ is perfect, the field extension $A/\mathfrak{m}$ is a separable extension over $k$.  By the Primitive Element Theorem, $A/\mathfrak{m}$ corresponds to the form $k(\alpha)$, with  $\alpha\in A/\mathfrak{m}$. Let $p(x)\in k[x]$ be the minimal polynomial of $\alpha$, since it is a separable polynomial over $k$, $p'(\alpha)\neq 0$. Since $A$ is of finite dimension,we have that $\mathfrak{m}^n=\mathfrak{m}^{n+1}$, for some $n\in\mathbb{N}$.  Nakayama lemma implies that $\mathfrak{m}^n=0$. Thus $A$ is complete with respect to the $\mathfrak{m}$-adic topology. By Hensel's Lemma there exists a unique element $x\in\alpha\subset A$ such that $p(x)=0$. Set $K=k(x)\subset A$, this field has the required property since the composition $K\rightarrow A\rightarrow A/\mathfrak{m}$ is an isomorphism by construction. 

It remains to show that the retract is natural.  If $C\rightarrow D$ is a morphism of irreducible coalgebras over $k$, then for the commutative diagram 
\[\xymatrix{\acute{E}t(C)\ar[r]\ar[d]& \acute{E}t(D)\ar[d]\\
                C\ar[r]& D}\]  we need to show that the diagram of retracts $C\rightarrow \acute{E}t(C)$ and $D\rightarrow \acute{E}t(D)$ commutes. 
Recall that any morphism of coalgebras $C\rightarrow D$ can be factored into an epimorphism followed by a monomorphism  $C\twoheadrightarrow F\hookrightarrow D$. Since $C$ and $D$ are irreducible,  Lemma \ref{funic} implies that $F$ is irreducible as well. Thus $\acute{E}t(F)\simeq \acute{E}t(D)$ and we get the following  diagram

\[\xymatrix{\acute{E}t(C)\ar@{->>}[r]\ar[d]&\acute{E}t(F)\ar[r]^-{\cong}\ar[d] & \acute{E}t(D)\ar[d]\\
                C\ar@/^/[u]\ar@{->>}[r]& F\ar@/^/[u]\ar@{^{(}->}[r]& D\ar@/^/[u]}\] 
By the uniqueness of the retraction $F\rightarrow \acute{E}t(F)$, we have that the right-hand side square, formed with the retractions,  commutes. 

It remains to show the the left-hand side diagram commutes. It suffices to reduce again to a finite dimensional case. Thus we have an inclusion of finite dimensional local algebras $A\hookrightarrow B$ and we have to show that the following diagram commutes

\[\xymatrix{A/\mathfrak{m}\ar[r]\ar[d]& B/\mathfrak{n}\ar[d]\\
                A\ar[r]& B}\] Since $\mathfrak{m}=\mathfrak{n}\cap A$ from a similar argument for the uniqueness of the retraction,we get that  
                \[A=B/\mathfrak{m}\cap A\oplus \mathfrak{m}\cap A=k\oplus A\] which implies the claim.
 \end{proof}

\begin{rmk}
Note that for any $D$ simple coalgebra over an algebraically closed field $k$, the counit map $\varepsilon: D\rightarrow k$ is an isomorphism.       

\[\xymatrix{k\cong \acute{E}t(D) \ar@{^{(}->}[r]\ar[d]_-{\cong}&D\ar[d]^-{\varepsilon_D}\\
                k \ar@{=}[r]&k  }\] thus the natural splitting is given by the counit map.    
\end{rmk}                

\begin{theorem}\label{etalesplit}
Let $k$ be a perfect field. Then for every coalgebra $C$ the inclusion  $\acute{E}t(C)\rightarrow C$ has a unique and natural splitting which is a map of coalgebras 
\end{theorem}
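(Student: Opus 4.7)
The plan is to assemble the splitting componentwise, using the direct sum decomposition into irreducible components (Lemma \ref{dirsumic}) and the existence and uniqueness of a natural retraction in the irreducible case (Lemma \ref{retract}). First, I would fix a coalgebra $C$ and write its canonical decomposition $C \cong \bigoplus_{\alpha} \overline{C_\alpha}$ indexed by the simple subcoalgebras $C_\alpha$ of $C$. Since each irreducible component $\overline{C_\alpha}$ contains the unique simple subcoalgebra $C_\alpha$, one has $\acute{E}t(\overline{C_\alpha}) = C_\alpha$, and therefore the inclusion $\acute{E}t(C) \hookrightarrow C$ is identified with the direct sum of the inclusions $\acute{E}t(\overline{C_\alpha}) \hookrightarrow \overline{C_\alpha}$.

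Next I would invoke Lemma \ref{retract} for each $\overline{C_\alpha}$ to obtain a unique coalgebra retraction $r_\alpha \colon \overline{C_\alpha} \to \acute{E}t(\overline{C_\alpha})$. Taking the direct sum gives a coalgebra map
\[
r_C \;=\; \bigoplus_\alpha r_\alpha \;\colon\; C \;\cong\; \bigoplus_\alpha \overline{C_\alpha} \;\longrightarrow\; \bigoplus_\alpha \acute{E}t(\overline{C_\alpha}) \;\cong\; \acute{E}t(C),
\]
which splits the inclusion. Uniqueness follows because any coalgebra retraction $r\colon C \to \acute{E}t(C)$ must, by Lemma \ref{funic} applied to $r$ and to the inclusion, send each irreducible component $\overline{C_\alpha}$ into $\acute{E}t(\overline{C_\alpha})$, and hence restricts on $\overline{C_\alpha}$ to a coalgebra retraction of $\acute{E}t(\overline{C_\alpha}) \hookrightarrow \overline{C_\alpha}$. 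The uniqueness clause of Lemma \ref{retract} forces this restriction to equal $r_\alpha$, so $r = r_C$.

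The main subtlety is naturality, since a morphism $f\colon C \to D$ need not send a given irreducible component of $C$ into a component of $D$ indexed by the same label; however, by Lemma \ref{funic} it does send $\overline{C_\alpha}$ into the irreducible component $\overline{f(C_\alpha)}$ of $D$. To verify naturality of the splitting, I would consider the diagram
\[
\xymatrix{
\overline{C_\alpha} \ar[r]^-{f|_{\overline{C_\alpha}}} \ar[d]_-{r_\alpha} & \overline{f(C_\alpha)} \ar[d]^-{r_{f(C_\alpha)}} \\
\acute{E}t(\overline{C_\alpha}) \ar[r] & \acute{E}t(\overline{f(C_\alpha)})
}
\]
and observe that both composites $\overline{C_\alpha} \to \acute{E}t(\overline{f(C_\alpha)})$ are coalgebra retractions after restriction, so the naturality statement of Lemma \ref{retract} in the irreducible case forces them to agree. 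Summing over $\alpha$ and using the functoriality of the decomposition $C \cong \bigoplus_\alpha \overline{C_\alpha}$ (the Corollary following Lemma \ref{funic}) yields the commutativity of
\[
\xymatrix{
C \ar[r]^-{f} \ar[d]_-{r_C} & D \ar[d]^-{r_D} \\
\acute{E}t(C) \ar[r]_-{\acute{E}t(f)} & \acute{E}t(D),
}
\]
completing the proof. The delicate point to double-check is that the componentwise retractions glue into a well-defined coalgebra map on the direct sum, but since the coalgebra structure on $\bigoplus_\alpha \overline{C_\alpha}$ is built blockwise from the pieces, this is automatic from $r_\alpha$ being coalgebra maps.
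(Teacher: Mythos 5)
Your proposal is correct and follows essentially the same route as the paper: decompose $C$ into its irreducible components via Lemma \ref{dirsumic}, apply the unique natural retraction of Lemma \ref{retract} on each component, sum the retractions, and deduce uniqueness and naturality from the fact that morphisms respect irreducible components (Lemma \ref{funic}). Your write-up is somewhat more explicit than the paper's about why uniqueness forces any retraction to preserve the blockwise decomposition, but the argument is the same.
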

\begin{proof}
If $C$ is an irreducible coalgebra the result follows by Lemma \ref{retract}. For the general case by Lemma \ref{dirsumic} we have that $D\cong \oplus_{\alpha} \overline{D_\alpha}$ and $\acute{E}t(D)\cong \oplus_{\alpha} D_\alpha$, where $\{ D_\alpha\}$  is the collection of simple subcoalgebras of $D$. We define the splitting as the direct sum of the splittings for the irreducible components. By Lemma \ref{dirsumic} every simple subcoalgebra is contained in a unique irreducible component. Then there can not be a splitting mixing the components and the naturality follows from Lemma \ref{funic}.      
                        
\end{proof}

\begin{corollary}\label{counitretract}
Let $k$ an algebraically closed field. Then the counit of the adjunction \[k^{\delta}[-]:\Sets\rightleftarrows \coCAlg_{k} :(-)^{gp}\] given by $k^{\delta}[C^{gp}]\rightarrow C$ has a natural retraction. 
\end{corollary}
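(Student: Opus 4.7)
The plan is to combine the two main results just proved, namely Proposition \ref{Etgplike} and Theorem \ref{etalesplit}, and simply observe that the counit factors through the \'etale part.

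First, I would recall from Proposition \ref{Etgplike} that, since $k$ is algebraically closed, the counit $k^{\delta}[C^{gp}] \to C$ of the adjunction factors as
\[
k^{\delta}[C^{gp}] \xrightarrow{\;\cong\;} \acute{E}t(C) \hookrightarrow C,
\]
where the first arrow is a natural isomorphism of coalgebras and the second is the canonical inclusion of the \'etale part. Both are natural in $C$ by Proposition \ref{Etgplike} and Lemma \ref{Naturalet}.

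Next, I would invoke Theorem \ref{etalesplit}: since $k$ is in particular perfect, the inclusion $\acute{E}t(C) \hookrightarrow C$ admits a unique natural retraction $r_C : C \to \acute{E}t(C)$ in $\coCAlg_k$. Composing with the inverse of the isomorphism of Proposition \ref{Etgplike} yields a coalgebra morphism
\[
\rho_C : C \xrightarrow{\;r_C\;} \acute{E}t(C) \xrightarrow{\;\cong\;} k^{\delta}[C^{gp}],
\]
and the composition of $\rho_C$ with the counit is the identity on $k^{\delta}[C^{gp}]$ because $r_C$ retracts the inclusion and the isomorphism cancels. Naturality of $\rho_C$ follows immediately from the naturality of both constituents.

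There is essentially no obstacle here, since all the real work has been done in Proposition \ref{Etgplike} (identifying the counit with the \'etale inclusion) and in Theorem \ref{etalesplit} (producing the natural splitting). The only point worth stating explicitly is that, since both factors in the factorisation of the counit are natural in $C$, the assembled retraction $\rho_C$ is natural as well, so the corollary follows immediately.
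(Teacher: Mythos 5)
Your proposal is correct and follows exactly the paper's argument: the paper's proof of this corollary is the one-line "This follows from Proposition \ref{Etgplike} and Theorem \ref{etalesplit}," and you have simply spelled out the factorisation of the counit through $\acute{E}t(C)$ and the composition with the natural retraction. Your observation that algebraically closed implies perfect (so Theorem \ref{etalesplit} applies) is the only implicit step, and it is handled correctly.
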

\begin{proof}
This follows from Proposition \ref{Etgplike} and Theorem \ref{etalesplit}.
\end{proof}

\begin{rmk}
In proposition \ref{Etgplike} the condition that the field is algebraically closed is necessary and it is needed for the proof of Theorem \ref{GoerssThm}. We will introduce a treatment for $k$ non-algebraically closed in chapter 2, we will require an auxiliary category of discrete $G$-sets.
\end{rmk}

\subsubsection{Discrete $G$-$\Sets$}

\begin{definition}
Let $G$ be a profinite group. Then a $G$-set is \it discrete \rm if the action is continuous when $X$ is given with the discrete topology. 
\end{definition}
This is equivalent to ask that the isotropy groups $H_x\subset G$ of $x$ be open, or equivalently that 
\[X=\bigcup_{H\subset G} X^H\] where $H$ runs over all the open subgroups of $G$ and  $X^H$ is the set of fixed points for $H$. In particular the orbit of any $x\in X$ is finite.

We can actually describe the category of discrete $G$-sets in a more sophisticated language. Let us denote by $G$-$\Sets_{fd}$ the full subcategory of finite discrete sets in $G$-$\Sets_{d}$, the category  $G$-$\Sets_{fd}$ has a pretopology defined by the covering families $U_i\rightarrow X$ such that $\coprod_{i} U_i\rightarrow X$ are surjections.  The associated Grothendieck topos is called the \it{classifying topos} \rm for the profinite group $G$, and is denoted as $BG$, \it{i.e} \rm an object in $BG$ is a sheaf of sets on the site $G$-$\Sets_{fd}$.

For each presheaf on $G$-$\Sets_{fd}$, we can define a $G$-set $LF$ as follows:
\[LF:=\colim_{i\in I}F(G_i)\] Right multiplication  by elements of $G_i$ induces a left $G_i$-action on $F(G_i)$ and so there is an induced left $G$-action on $LF$.

\begin{definition}
Let $F\acute{E}t/\cR$ be the full subcategory of $Sm_\cR$ consisting of all the schemes of finite type over $\cR$ which are smooth of dimension zero. Every object $S\in F\acute{E}t/\cR$ is a finite disjoint union of $\Spec(k')$ for $k'$ a finite separable field extension of  $\cR$. 
\end{definition}

Let $U\in F\acute{E}t/\cR$, the functor defined by 
\[U\mapsto \hom_k(Spec(k_{sep}),U) \] defines an isomorphism of sites
\[\mathcal{F}:F\acute{E}t_k\rightarrow G\mbox{-}\Sets_{fd}, \] existance of this isomorphism is just Galois theory and observe for $F$ a sheaf over $G\mbox{-}\Sets_{fd}$ the associated $G$-set $LF$ correspondes to $F_{Spec(k_{sep})}$, the stalk at the geometric point. 

\begin{proposition}
Let $k$ be a separable field and  $G=Gal(k_{sep}/k)$  the absolute Galois group. Then the following categories are equivalent:
\begin{enumerate}
\item The category of discrete $G$-sets.
\item The category of sheaves of sets on $G$-$\mbox{Sets}_{fd}$
\item The category of sheaves of sets  on $F\acute{E}t_k$
\end{enumerate} 
\end{proposition}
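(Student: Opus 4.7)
The plan is to prove the proposition in two stages. First, the equivalence $(2) \simeq (3)$ is immediate from the isomorphism of sites $\mathcal{F}\colon F\acute{E}t_k \to G\text{-}\Sets_{fd}$ recalled just above: $\mathcal{F}$ is an equivalence of underlying categories that carries surjections in $F\acute{E}t_k$ (the covering families on the left) to surjections of finite discrete $G$-sets (the covering families on the right), so pullback along $\mathcal{F}$ induces an equivalence on the sheaf categories. This reduces the work to establishing $(1) \simeq (2)$, which is the classical identification of the classifying topos $BG$ of a profinite group with the category of discrete $G$-sets.

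For $(1) \simeq (2)$ I would write down explicit quasi-inverse functors. Define
\[ \Phi\colon G\text{-}\Sets_d \to \Sh(G\text{-}\Sets_{fd}), \qquad \Phi(X)(S) = \Hom_G(S, X), \]
on a finite discrete $G$-set $S$, and define $\Psi\colon \Sh(G\text{-}\Sets_{fd}) \to G\text{-}\Sets_d$ by $\Psi(F) = \colim_{U} F(G/U)$, where $U$ ranges over the open normal subgroups of $G$; the discrete $G$-action on $\Psi(F)$ is the one indicated in the paragraph defining $L$ above.

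Three things then need to be checked. First, $\Phi(X)$ is a sheaf: for any surjection $S' \twoheadrightarrow S$ of finite discrete $G$-sets, the sequence
\[ \Hom_G(S, X) \to \Hom_G(S', X) \rightrightarrows \Hom_G(S' \times_S S', X) \]
is exact because surjections in $G\text{-}\Sets$ are regular epimorphisms. Second, $\Psi \circ \Phi \simeq \mathrm{id}$:
\[ \Psi\Phi(X) = \colim_U \Hom_G(G/U, X) = \colim_U X^U = X, \]
the last equality using that the stabiliser of every element of $X$ is open. Third, $\Phi \circ \Psi \simeq \mathrm{id}$: for $U \subset G$ open normal,
\[ \Phi\Psi(F)(G/U) = (\Psi F)^U = \colim_{V \subseteq U} F(G/V)^{U/V}, \]
and the sheaf axiom applied to the $U/V$-Galois cover $G/V \to G/U$, whose self-fibered product equals $U/V \times G/V$, identifies $F(G/V)^{U/V}$ with $F(G/U)$, collapsing the colimit to $F(G/U)$. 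A general $S \in G\text{-}\Sets_{fd}$ decomposes into finitely many orbits $\coprod_i G/U_i$, and both sides convert the disjoint union into a product.

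The main obstacle is this third verification: its key input is translating the sheaf equalizer condition for $G/V \to G/U$ into an $U/V$-fixed-point formula, which requires recognising that map as a Galois cover in the site and explicitly computing its self-fibered product. Everything else reduces to formal manipulations once the functors $\Phi$ and $\Psi$ are in place.
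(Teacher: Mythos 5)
Your argument is correct, and it is considerably more self-contained than what the paper actually does: the paper proves $(1)\simeq(2)$ by citing \cite[Proposition 6.20]{jardine2010generalized} and $(1)\simeq(3)$ by appealing to the Galois correspondence, whereas you derive $(2)\simeq(3)$ directly from the isomorphism of sites $\mathcal{F}$ (which is where the Galois correspondence enters) and then prove $(1)\simeq(2)$ by exhibiting the quasi-inverse pair $\Phi$, $\Psi$ and checking the sheaf condition, the unit, and the counit explicitly. The identification of the self-fibered product of $G/V\to G/U$ with $U/V\times G/V$ and the resulting formula $F(G/U)\cong F(G/V)^{U/V}$ is exactly the right key computation, and your remark that the $U$-action on $F(G/V)$ factors through the finite group $U/V$ is what makes the fixed points commute with the filtered colimit. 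The one place you should add a line is the reduction of a general finite discrete $G$-set to orbits: an orbit is $G/H$ with $H$ open but not necessarily normal, while your computation $\Phi\Psi(F)(G/U)\cong F(G/U)$ is carried out only for $U$ open and normal. The identical argument works for arbitrary open $H$: choose $V\subseteq H$ open and normal in $G$; then $G/V\to G/H$ is still a covering, its self-fibered product is still $H/V\times G/V$, so the sheaf condition gives $F(G/H)\cong F(G/V)^{H/V}$, and $(\Psi F)^H=\colim_V F(G/V)^{H/V}$ collapses as before. With that sentence added (plus the routine naturality checks), the proof is complete.
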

\begin{proof}
The equivalence between \it{(1)} \rm and \it(2)  \rm is given for example in \cite[Proposition 6.20]{jardine2010generalized}. The equivalence between  \it{(1)} \rm and \it(3)  \rm follows from the Galois correspondence.

\end{proof}

Let us make some remarks about the topos $G$-$\Sets_{d}$ of discrete $G$-sets
\begin{itemize}
\item The topos of discrete $G$-sets has enought points \it{i.e}  \rm  there is a functor
\[u^{*}:G\mbox{-}\Sets_{d}\rightarrow Sets\] which is defined by forgetting the group structure. Colimits and finite limits  in the category of discrete $G$-sets are formed in the category of sets, then the functor $u^{*}$ is faithful and exact. It is enough to check isomorphism between discrete $G$-sets $F\rightarrow G$  at only one stalk, the underlying set.
\end{itemize}

In particular if $G=Gal(k_{sep}/k)$ for $k$ a field, then we have the following well known identification between the finite \'etale site $F\acute{E}t/\cR$ defined below and the site  $G$-$\Sets_{fd}$ associated to the profinite group $G$.

\subsubsection{The category of coalgebras over a non-algebraically closed field $k$}

\begin{proposition}\label{final}
There exists a left adjoint functor $\bar{k}^{\vee}[-]_G:G-Sets_d\rightarrow \coCAlg_{\cRR}$  with right adjoint given by 
\begin{equation*}
\begin{aligned}
RC:Orb(G)^{op}&\rightarrow Sets  \\
G/H &\mapsto\mbox{Hom}_{\coCAlg_k}((\bar{k}^H)^\vee,C).
\end{aligned}   
\end{equation*} Furthermore, the functor  $\bar{k}^{\vee}[-]_G$ is fully faithful, and the counit of the adjunction is given by the embedding
\[\acute{E}t(C)\rightarrow C\] 

\end{proposition}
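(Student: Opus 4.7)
My plan would be to build the adjunction carefully on the orbit category $\mathrm{Orb}(G)$ and then extend by coproducts, using the classification of simple coalgebras over $k$ recalled in Section \ref{coalgclassif} (every simple coalgebra over $k$ is of the form $K^{\vee}$ for some finite separable extension $K/k$). Concretely, I would \emph{define} $\bar{k}^{\vee}[-]_G$ as the left Kan extension along $\mathrm{Orb}(G)\hookrightarrow G\text{-}\Sets_d$ of the functor $G/H\mapsto (\bar{k}^H)^{\vee}$, which exists because $\coCAlg_k$ is cocomplete (Theorem~\ref{cofree}) and every discrete $G$-set is the coproduct of its orbits $\bigsqcup_\alpha G/H_\alpha$; concretely $\bar{k}^{\vee}[\bigsqcup_\alpha G/H_\alpha]_G = \bigoplus_\alpha (\bar{k}^{H_\alpha})^{\vee}$. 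The formula defining $R$ on orbits extends by the universal property of coproducts to all of $G\text{-}\Sets_d$, and the adjunction $\Hom_{\coCAlg_k}(\bar{k}^{\vee}[X]_G,C)\cong \Hom_{G\text{-}\Sets_d}(X,RC)$ reduces, after splitting $X$ into orbits, to the defining formula $RC(G/H)=\Hom_{\coCAlg_k}((\bar{k}^H)^{\vee},C)$.

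For full faithfulness, by taking a coproduct decomposition on the source it is enough to compute $\Hom_{\coCAlg_k}((\bar{k}^H)^{\vee},\bigoplus_\beta (\bar{k}^{K_\beta})^{\vee})$. The key point is that $(\bar{k}^H)^{\vee}$ is simple, so the image of any coalgebra map out of it is a simple subcoalgebra, which by Lemma~\ref{sumcoalg} must lie in one of the summands $(\bar{k}^{K_\beta})^{\vee}$. Therefore this Hom set is the disjoint union $\bigsqcup_\beta\Hom_{\coCAlg_k}((\bar{k}^H)^{\vee},(\bar{k}^{K_\beta})^{\vee})$. Linear duality identifies each term with $\Hom_{k\text{-}\mathrm{Alg}}(\bar{k}^{K_\beta},\bar{k}^H)$, and Galois theory identifies this in turn with $\Hom_{G\text{-}\Sets}(G/H,G/K_\beta)$. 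Summing up recovers $\Hom_{G\text{-}\Sets_d}(G/H,\bigsqcup_\beta G/K_\beta)$, establishing full faithfulness (equivalently, the unit $X\to R\bar{k}^{\vee}[X]_G$ is an isomorphism).

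To identify the counit with $\acute{E}t(C)\hookrightarrow C$, I would first observe that since the image of any coalgebra map $(\bar{k}^H)^{\vee}\to C$ from a simple coalgebra is itself a simple subcoalgebra of $C$, such a map factors uniquely through the inclusion $\acute{E}t(C)\hookrightarrow C$. Consequently the natural map $RC\to R(\acute{E}t(C))$ induced by $\acute{E}t(C)\hookrightarrow C$ is a bijection of discrete $G$-sets, and the counit $\bar{k}^{\vee}[RC]_G\to C$ factors as
\[
\bar{k}^{\vee}[RC]_G\;\cong\;\bar{k}^{\vee}[R\acute{E}t(C)]_G\;\xrightarrow{\varepsilon_{\acute{E}t(C)}}\;\acute{E}t(C)\;\hookrightarrow\;C.
\]
It then remains to show that on étale coalgebras the counit is an isomorphism. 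Writing $\acute{E}t(C)=\bigoplus_\alpha C_\alpha$ by Lemma~\ref{Naturalet}, with each $C_\alpha\cong (\bar{k}^{H_\alpha})^{\vee}$ for some open $H_\alpha\le G$ (using the classification of simple coalgebras together with the primitive element theorem and choosing an embedding $C_\alpha^{\vee}\hookrightarrow \bar{k}$), the full faithfulness established above shows that $R\acute{E}t(C)\cong\bigsqcup_\alpha G/H_\alpha$ and hence $\bar{k}^{\vee}[R\acute{E}t(C)]_G\cong\bigoplus_\alpha (\bar{k}^{H_\alpha})^{\vee}=\acute{E}t(C)$, with the counit realizing the identity.

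The main obstacle is the bookkeeping in the last step: a given simple subcoalgebra $C_\alpha\subset C$ corresponds to an entire Galois orbit of embeddings $(\bar{k}^{H_\alpha})^{\vee}\hookrightarrow C$ (as $H_\alpha$ is only determined up to conjugation by the choice of embedding $C_\alpha^{\vee}\hookrightarrow\bar{k}$), and one must verify that the coproduct-of-orbits decomposition of $R\acute{E}t(C)$ matches the direct-sum decomposition of $\acute{E}t(C)$ term by term. Once this matching is made precise via the Galois correspondence, the identification of the counit with the inclusion of the étale part is immediate.
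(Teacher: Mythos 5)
Your proposal is correct and follows essentially the same route as the paper: define $\bar{k}^{\vee}[-]_G$ as the left Kan extension of $G/H\mapsto(\bar{k}^H)^{\vee}$, verify the adjunction via the identification $\Hom_{\coCAlg_k}((\bar{k}^{H'})^{\vee},(\bar{k}^{H})^{\vee})\cong\Hom_{G\text{-}\Sets}(G/H',G/H)$, and identify $\bar{k}^{\vee}[RC]_G$ with $\acute{E}t(C)$ so that the counit is the inclusion of the \'etale part. You merely supply the details the paper compresses into ``a computation shows'' --- in particular the use of Lemma~\ref{sumcoalg} to split $\Hom$ out of a simple coalgebra into a direct sum, and the Galois-orbit bookkeeping in the last step --- and these details check out.
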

\begin{proof}
The functor is defined as the left Kan extension of the functor 
\begin{equation*}
\begin{aligned}
Orb(G)&\rightarrow Sets  \\
G/H &\mapsto(\bar{k}^H)^\vee
\end{aligned}
\end{equation*} along the Yonneda embedding \it{i.e.}\rm,
\[\bar{k}^\vee[X]_G:=\colim_{G/H\rightarrow X}(\bar{k}^H)^\vee.\]  Let us prove that $R$ is its right adjoint. It is enough to construct the unit and counit maps. Note that for every  representable sheaf $G/H$ we have the following isomorphism \[\Hom_{coCAlg_k}((\bar{k}^{H'})^\vee, (\bar{k}^H)^\vee)\simeq Hom(G/H',G/H).\] Then we  have an isomorphism $X\rightarrow R(\bar{k}^\vee[X]_G)$ for each $X\in G\mbox{-}Sets_d$. On the other hand, note that for each $C\in \coCAlg$  $RC(G/H)$ index all the embeddings of $(\bar{k}^H)^\vee$ in $C$, then:
\[\bar{k}^\vee[RC]_G=\colim_{G/H\rightarrow RC}(\bar{k}^H)^\vee=\acute{E}t(C).\] Furthermore, we have a natural embedding  $\acute{E}t(C)\rightarrow C$. A computation shows that the identity transformation $X\rightarrow R(\bar{k}^\vee[X]_G)$ and the inclusion $\acute{E}t(C)\rightarrow C$ defines the unit and counit maps. 
\end{proof}


\subsection{Category of presheaves of coalgebras $\coCAlg_{\cR}(\cC)$}
For this section let us denote $\cC$ an small category and $k$ a field. We extend the theorem \ref{cofree} to the category of presheaves with the sectionwise tensor product as monoidal structure. Here the argument is taken from \cite{MR3073905}. For completeness we reproduce the proof here. \begin{proposition}\label{pshcofreesw}
The category of presheaves of coalgebras $\coCAlg_k(\cC)$ is locally finitely presentable and the forgetful functor $\coCAlg_k(\cC)\rightarrow \Mod_k(\cC)$ has a right adjoint, $CF: \Mod_k(\cC)\rightarrow \coCAlg_k(\cC)$. 
\end{proposition}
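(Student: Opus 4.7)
The first move I would make is to recognize the identification $\coCAlg_k(\cC) \cong \mathrm{Fun}(\cC^{op}, \coCAlg_k)$. This holds precisely because the monoidal structure on $\Mod_k(\cC)$ is the sectionwise tensor product and the unit is the constant presheaf $\underline{k}$: the data of natural transformations $\Delta \colon F \to F \otimes F$ and $\varepsilon \colon F \to \underline{k}$ satisfying the coalgebra axioms is equivalent to equipping each $F(U)$ with a $k$-coalgebra structure and demanding that every transition map $F(V) \to F(U)$ be a morphism of coalgebras. Once this identification is in hand, the proposition reduces to two general facts about functor categories valued in $\coCAlg_k$.

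For local finite presentability, I would quote the standard principle that for a small category $\cC$, the functor category $\mathrm{Fun}(\cC^{op}, \mathcal{D})$ is locally finitely presentable whenever $\mathcal{D}$ is, with finitely presentable objects built from finite colimits of representables tensored with finitely presentable objects of $\mathcal{D}$. Since Theorem \ref{cofree} gives local finite presentability of $\coCAlg_k$, this transfers verbatim to $\coCAlg_k(\cC)$. An alternative route, which parallels the proof of Theorem \ref{cofree}, is to observe that colimits in $\coCAlg_k(\cC)$ are computed sectionwise (created by the forgetful functor to $\Mod_k(\cC)$, since sectionwise tensor products commute with colimits in each variable), and then invoke the sectionwise version of the Fundamental Theorem of Coalgebras (Proposition \ref{FTC}).

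For the cofree coalgebra functor, I would define $CF$ sectionwise by $CF(M)(U) := CF_{\coCAlg_k}(M(U))$, with transition maps induced functorially from $M(f) \colon M(V) \to M(U)$ via the cofree functor of Theorem \ref{cofree}. The adjunction
\[
\Hom_{\coCAlg_k(\cC)}(C, CF(M)) \cong \Hom_{\Mod_k(\cC)}(UC, M)
\]
then follows from the sectionwise adjunction together with naturality: a natural transformation $C \to CF(M)$ is exactly a compatible family of coalgebra maps $C(U) \to CF(M(U))$, which the pointwise cofree adjunction transforms into a compatible family of $k$-linear maps $C(U) \to M(U)$, i.e.\ a morphism in $\Mod_k(\cC)$. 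Alternatively, one can apply the dual Special Adjoint Functor Theorem directly, verifying that $\coCAlg_k(\cC)$ is cocomplete, has a generator (given by the finitely presentable objects), and is well-copowered, each of these properties being inherited sectionwise from $\coCAlg_k$.

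The main technical obstacle is really the bookkeeping around the identification $\coCAlg_k(\cC) \cong \mathrm{Fun}(\cC^{op}, \coCAlg_k)$: one must check that the sectionwise tensor product genuinely matches the coalgebra structure in $\Mod_k(\cC)$ and that the forgetful functor intertwines the two sides. Everything else is a direct transfer of the sectionwise result to presheaves.
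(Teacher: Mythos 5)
Your proof is correct, but it takes a genuinely different route from the paper's. You begin by identifying $\coCAlg_k(\cC)$ with the functor category $\mathrm{Fun}(\cC^{op},\coCAlg_k)$ --- which is legitimate here precisely because the monoidal structure is sectionwise, so the structure maps $\Delta$ and $\varepsilon$ and their axioms are all checked objectwise and naturality forces the transition maps to be coalgebra morphisms --- and then you import local finite presentability from the standard fact that $\mathrm{Fun}(\cC^{op},\mathcal{D})$ is locally finitely presentable whenever $\mathcal{D}$ is, and build $CF$ sectionwise as postcomposition with the cofree functor of Theorem \ref{cofree} (postcomposition with a right adjoint is right adjoint to postcomposition with the left adjoint, so this does give the adjunction). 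The paper instead stays inside $\coCAlg_k(\cC)$ and runs the argument of \cite{MR3073905}: it first treats the discrete category case, then uses the adjunction $i_!\dashv i^*$ along the inclusion of the discrete subcategory $\cC_0\hookrightarrow\cC$ to manufacture a strong set of generators from the surjective counit $i_!i^*C\to C$, verifies cocompleteness via creation of colimits by the forgetful functor, and obtains $CF$ abstractly from the dual Special Adjoint Functor Theorem. Your approach is shorter and yields an explicit formula $CF(M)(U)=CF(M(U))$, which the paper's SAFT argument does not. What the paper's approach buys is generality: as the remark following the proposition notes, the argument of \emph{loc.\,cit.} applies when $k$ is replaced by a presheaf of rings $\cRR$ on $\cC$, a situation in which the coefficients vary over $\cC$ and your identification with a functor category into a single fixed category of coalgebras is no longer available. (For the same reason, neither argument transfers to the Day convolution setting of Corollary \ref{lpres}, where coalgebra objects are not sectionwise coalgebras.)
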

\begin{proof}

First assume that $\cC$ is a discrete category, i.e. for each object $U$ the hom sets $\mbox{Hom($U,U$)}=\{id_U\}$  and empty otherwise. By \ref{cofree} the category of presheaves of coalgebras over $\cC$,  is finitely presentable with strong small set of generators $\cD$ given by the collection of presheaves of coalgebras  $D$  such that the sections of $D$ are zero except for just one object $U\in Ob(\cC)$, and $D(U)$ is a finite dimensional coalgebra.  
For the general case let $i:\cC_0\hookrightarrow \cC$ be the canonical inclusion of the discrete subcategory. This inclusion induces an adjunction in the categories of presheaves:
\[i_{!}:\coCAlg_k(\cC_0)\rightleftarrows \coCAlg_k(\cC): i^{*}.\]
The functor $i_{!}$ is given explicitly as:
 \[i_{!}(\cD)(V)=\bigoplus_{V\rightarrow U}\cD(U).\] We want to show that the category $\mbox{ coCAlg}_k$ is finitely presentable, by \cite[Theorem 1.1]{jiris1994} it is enough to show that $\mbox{ coCAlg}_k$ is cocomplete and has a strong set of generators. The counit of the adjunction $i_{!}i^{*}C\rightarrow C$,  with $C\in\mbox{\rm coCAlg}_k$, is a  surjective map  and by the discrete case $i^{*}C=\mbox{colim}_\alpha D_\alpha$ with $D_\alpha\in\cD$. Because $i_{!}$ is a left adjoint, it commutes with colimits and we get a surjective map $\mbox{colim}_\alpha i_{!}(D_\alpha)\rightarrow D$, from this surjection it follows that given two different maps $f,g:D\rightrightarrows E$ there exists a map $i_{!}(D_\beta)\rightarrow D$ such that the composition with $f$ and $g$ is different and for every proper subobject $K$ of $D$, there exists  $i_{!}(D_\gamma)\rightarrow D$ a map such that it does not factorize trough $K$, i.e. $\mbox{colim}_\alpha i_{!}(D_\alpha)$ is an strong generator.  Furthermore $\mbox{ coCAlg}_k$ is cocomplete, colimits are created by the forgetful functor because the section-wise tensor product preserves colimits in both variables, the category is locally finitely presentable. By the dual of the special adjoint functor theorem we have the existence of the cofree algebra functor $CF$. 

\end{proof}

\begin{rmk}
In \it{loc.cit} \rm  the previous statement is proved for $\cRR$ a presheaf of rings on $\cC$, using the local presentability of the category of coalgebras over a ring $\coCAlg_\cRR$ proved in \cite[Theorem 3.1]{BARR1974600}. We do not  need that generality in this work. Moreover, only in the case of algebraically closed fields we have a good description of the category of coalgebras. 
\end{rmk}
\begin{definition} 
Let $C$ be a coalgebra object in $\Mod_{\cR}(\cC)$ under the section-wise monoidal structure. The \'etale subpresheaf is defined as $U\mapsto \acute{E}t(C(U))$.
\end{definition}

The \'etale presheaf is well-defined since for each $U\in \cC$, $C(U)$ is a coalgebra over $k$. Furthermore, the functor $\cR^{\delta}[-]$ extends to a functor of presheaves:

\[\cR^{\delta}[-]:\Psh(\cC)\rightarrow \Mod_{\cR}(\cC)\] and the right adjoint $(-)^{gp}$ is given section-wise. As a consequence Proposition \ref{Etgplike} extends to the categories of presheaves.

\begin{proposition}\label{ShvEtgplike}
Let $k$ an algebraically closed field and $C$ a presheaf of coalgebras. Then the unit of the adjunction $k^{\delta}[C^{gp}]\rightarrow C$ factors through the inclusion of presheaves $\acute{E}t(C)\subset C$ and induces a natural equivalence 
\[k^{\delta}[C^{gp}]\cong \acute{E}t(C).\]
\end{proposition}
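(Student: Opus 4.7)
The plan is to reduce the statement to the pointwise version already established in Proposition \ref{Etgplike}, exploiting the fact that every functor in sight is computed sectionwise.

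First, I would unwind the definitions. By construction, $k^{\delta}[-]: \Psh(\cC) \to \Mod_k(\cC)$ is the sectionwise extension of the corresponding functor on sets, the right adjoint $(-)^{gp}$ is given sectionwise (as recorded in the paragraph preceding the statement), and the \'etale subpresheaf is by definition $U \mapsto \acute{E}t(C(U))$. The counit $k^{\delta}[C^{gp}] \to C$ is also induced sectionwise from the counit on $\Sets$-valued coalgebras. For each fixed $U \in \cC$, the object $C(U)$ is a coalgebra over the algebraically closed field $k$, so Proposition \ref{Etgplike} applies and supplies both the factorization of $k^{\delta}[C(U)^{gp}] \to C(U)$ through $\acute{E}t(C(U))$ and the resulting isomorphism $k^{\delta}[C(U)^{gp}] \xrightarrow{\cong} \acute{E}t(C(U))$.

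The remaining task is to assemble these sectionwise isomorphisms into a morphism of presheaves, and to check that they are compatible with the transition maps of $C$. This is precisely where the naturality clause in Proposition \ref{Etgplike} intervenes: for any morphism $\varphi : C(V) \to C(U)$ of $k$-coalgebras (induced by an arrow $U \to V$ in $\cC$), the square
\[
\xymatrix{
k^{\delta}[C(V)^{gp}] \ar[r]^-{\cong} \ar[d]_-{k^{\delta}[\varphi^{gp}]} & \acute{E}t(C(V)) \ar[d]^-{\acute{E}t(\varphi)} \\
k^{\delta}[C(U)^{gp}] \ar[r]^-{\cong} & \acute{E}t(C(U))
}
\]
commutes and is compatible with the counit into $C(U)$, $C(V)$ respectively. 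This is exactly the data of a presheaf morphism, so the sectionwise isomorphisms glue to an isomorphism $k^{\delta}[C^{gp}] \cong \acute{E}t(C)$ of presheaves of coalgebras, and the factorization of the counit through the inclusion $\acute{E}t(C) \subset C$ passes to presheaves as well.

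The hard part of the underlying result—controlling the simple subcoalgebras of a coalgebra over $k$ via group-like elements—has already been absorbed into Proposition \ref{Etgplike}; here everything reduces to the observation that the three operations $k^{\delta}[-]$, $(-)^{gp}$, and $\acute{E}t(-)$ are all sectionwise, so no further work beyond naturality bookkeeping is required.
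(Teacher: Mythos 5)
Your proposal is correct and follows the same route as the paper, which simply deduces the presheaf statement from the pointwise Proposition \ref{Etgplike}; you merely make explicit the sectionwise nature of $k^{\delta}[-]$, $(-)^{gp}$, and $\acute{E}t(-)$ and the role of naturality in assembling the sectionwise isomorphisms into a morphism of presheaves. No issues.
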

\begin{proof}
This is trivial from \ref{Etgplike}
\end{proof}
\begin{theorem}
Let $k$ be a perfect field. Then for every presheaf of coalgebras $C$ the inclusion $\acute{E}t(C)\rightarrow C$ has a unique and natural splitting, which is a map of presheaves of coalgebras
\end{theorem}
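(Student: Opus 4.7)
The plan is to reduce the presheaf statement to the sectionwise statement already established in Theorem \ref{etalesplit}. Since the monoidal structure on $\Mod_k(\cC)$ is defined sectionwise, for every $U\in\cC$ the value $C(U)$ is a coalgebra over the perfect field $k$, and by the definition of the \'etale subpresheaf we have $\acute{E}t(C)(U)=\acute{E}t(C(U))$ with the embedding being the canonical one in each section. First I would apply Theorem \ref{etalesplit} to each $C(U)$, obtaining a unique coalgebra retraction $s_U\colon C(U)\to \acute{E}t(C(U))$ of the inclusion.

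Next I would check that the family $(s_U)_{U\in\cC}$ assembles into a morphism of presheaves. Given $\varphi\colon U\to V$ in $\cC$, the restriction $C(\varphi)\colon C(V)\to C(U)$ is a morphism of $k$-coalgebras; by Lemma \ref{funic} it carries \'etale parts into \'etale parts, and the naturality clause of Theorem \ref{etalesplit} (which is in turn inherited from the naturality in Lemma \ref{retract}) forces the square
\[
s_U\circ C(\varphi)\;=\;\acute{E}t(C(\varphi))\circ s_V
\]
to commute. Hence $s=(s_U)_{U}$ is a natural transformation $s\colon C\to \acute{E}t(C)$.

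The remaining properties all transfer sectionwise. Because the comultiplication, counit, and the inclusion $\acute{E}t(C)\hookrightarrow C$ are defined sectionwise, $s$ is a morphism of presheaves of coalgebras and a retraction of the inclusion, since both conditions hold in every section by construction. Uniqueness of $s$ follows from the sectionwise uniqueness: any presheaf-of-coalgebras retraction $s'$ gives in each section a coalgebra retraction $s'_U$, which must coincide with $s_U$ by Theorem \ref{etalesplit}; thus $s'=s$. Naturality of $s$ in $C$ is obtained in the same way from the naturality in Theorem \ref{etalesplit}.

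There is no serious obstacle here: the hard work was done at the level of coalgebras over $k$ in Lemma \ref{retract} and Theorem \ref{etalesplit}, and the present statement is a formal consequence once one observes that all the relevant structure on $\Mod_k(\cC)$ — tensor product, coalgebra axioms, and the \'etale subobject — is defined sectionwise, so that both existence/uniqueness of the splitting and its naturality lift verbatim from $\coCAlg_k$ to $\coCAlg_k(\cC)$.
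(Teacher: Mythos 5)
Your argument is correct and matches the paper's proof: the paper likewise deduces the presheaf statement directly from Theorem \ref{etalesplit}, using the naturality of the sectionwise splitting to assemble the retractions $C(U)\to \acute{E}t(C(U))$ into a morphism of presheaves of coalgebras. Your version just spells out the compatibility with restriction maps and the sectionwise uniqueness in more detail.
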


\begin{proof}
This is a direct consequence of Theorem \ref{etalesplit}. Since the splitting is natural, this defines a morphism of presheaves of coalgebras $C\rightarrow \acute{E}t(C)\rightarrow C$.
\end{proof}

\begin{corollary}
Let $k$ an algebraically closed field. Then the counit of the adjunction given by  $k[C^{gp}]\rightarrow C$ has a natural retraction.  
\end{corollary}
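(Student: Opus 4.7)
The plan is to assemble the corollary directly from the two results that immediately precede it in the presheaf setting, mirroring exactly how Corollary \ref{counitretract} was obtained from Proposition \ref{Etgplike} and Theorem \ref{etalesplit} in the pointwise (non-presheaf) case.

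First I would invoke Proposition \ref{ShvEtgplike}: since $k$ is algebraically closed, the counit $k^{\delta}[C^{gp}] \to C$ of the adjunction $k^{\delta}[-] \dashv (-)^{gp}$ at the presheaf level factors through the subpresheaf inclusion $\acute{E}t(C) \hookrightarrow C$ via a natural isomorphism $k^{\delta}[C^{gp}] \xrightarrow{\cong} \acute{E}t(C)$. So constructing a natural retraction of $k^{\delta}[C^{gp}] \to C$ reduces to constructing a natural retraction of the inclusion $\acute{E}t(C) \hookrightarrow C$ of presheaves of coalgebras.

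Next I would apply the preceding theorem, which states that for a perfect field (in particular, an algebraically closed one) the inclusion $\acute{E}t(C) \hookrightarrow C$ admits a unique and natural splitting in the category of presheaves of coalgebras. Composing the inverse of the isomorphism from Proposition \ref{ShvEtgplike} with this splitting yields the desired natural retraction
\[
C \longrightarrow \acute{E}t(C) \xrightarrow{\ \cong\ } k^{\delta}[C^{gp}].
\]
Naturality in $C$ follows immediately from the naturality of both the isomorphism in Proposition \ref{ShvEtgplike} and the splitting in the preceding theorem.

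Since both ingredients are already established, there is no real obstacle here; the corollary is essentially a bookkeeping step. The only point worth checking carefully is that the isomorphism from Proposition \ref{ShvEtgplike} is in fact the section-wise application of the isomorphism from Proposition \ref{Etgplike}, so that composing it with the section-wise retraction of Theorem \ref{etalesplit} (which is what the preceding theorem promotes to presheaves) genuinely produces a morphism in $\coCAlg_k(\cC)$ rather than merely in $\Mod_k(\cC)$. This is clear because both constructions are defined section-wise and each section is a coalgebra map.
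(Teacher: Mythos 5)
Your proof is correct and follows exactly the paper's route: compose the natural splitting $C\to\acute{E}t(C)$ from the preceding theorem with the inverse of the natural isomorphism $k^{\delta}[C^{gp}]\cong\acute{E}t(C)$ of Proposition \ref{ShvEtgplike}. The paper's own (one-line) proof invokes precisely these two ingredients, so there is nothing to add.
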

\begin{proof}
This follows from the previous theorem, noting that the isomorphism in \ref{Etgplike} is natural.\end{proof}

\subsection{Category of presheaves of coalgebras for the Day convolution product $\coCAlg_{\cR}^{Day}(\cC)$}
For this section we fix the following notation. Let $\cRR$ be a commutative ring  and $(\cC,\otimes_{\cC},1_\cC)$ a  small $\Mod_\cRR$-enriched  $\cRR$-linear symmetric monoidal category.

\subsection{Day convolution Product}\label{DayConv} 
 
 Consider  $\mbox{Psh}(\cC,\Mod_\cRR)$ the category of presheaves, there is a natural extension of  the symmetric monoidal structure on $\cC$ to the  category of presheaves $\mbox{Psh}(\cC,\Mod_\cRR)$,  introduced by Day in \cite{MR0272852}. The  Day convolution product of  two presheaves $F$ and $G$  is given by the coend formula.

\[F\otimes^{Day} G=\int^{X, Y} \cC(-,X\otimes_\cC Y)\otimes_\cR F(X)\otimes_\cRR G(Y)\] in other words it is the left Kan extension of the external tensor product of $F$ and $G$, denoted by $F\overline{\otimes}G$,   along  $-\otimes_\cC -$. Explicitly it is the coequalizer of the two maps: 
\small
\begin{equation}\label{coeq}
\bigoplus_{\substack{(X,Y)\in \cC\times\cC\\
(X',X')\in \cC\times\cC}}\cC(U,X'\otimes Y')\otimes_\cRR  \cC\times\cC((X',Y'),(X,Y))\otimes_\cRR F(X)\otimes_\cRR G(Y)\rightrightarrows 
 \bigoplus_{(X,Y)\in \cC\times \cC}\cC(U,X\otimes Y)\otimes_\cRR F(X)\otimes_\cRR G(X) \end{equation} \normalsize  given  $\phi\in\cC(U,X'\otimes Y')$, $(\alpha,\beta)\in \cC\times\cC((X',Y'),(X,Y))$ and $s\otimes t\in F(X)\otimes_\cRR G(X)$, then the top map is given by:
 
\[ \phi\otimes (\alpha,\beta)\otimes (s\otimes t)\mapsto ((\alpha,\beta)\circ \phi)\otimes (s\otimes t) \] and the bottom map is given by:

\[ \phi\otimes (\alpha,\beta)\otimes (s\otimes t)\mapsto \phi\otimes (\alpha^{*}(s)\otimes \beta^{*}(t))) \] equivalently 
\[(F\otimes^{Day} G)(U)=\colim_{U\rightarrow X\otimes Y}F(X)\otimes G(Y)\] in particular for two representable sheaves $\mathbf{h}_X$ and $\mathbf{h}_Y$ the Day convolution product is given by the representable sheaf $\mathbf{h}_{X\otimes Y}$. In other words  the Yoneda embedding $y:\cC^{op}\rightarrow \cD$ is a symmetric monoidal functor. 

\begin{proposition}\label{Dayinternal}
Let $\cRR$ be a commutative ring  and $(\cC,\otimes_{\cC},1_\cC)$ a small $\Mod_\cRR$-enriched  $\cRR$-linear symmetric monoidal  category.   The monoidal category with the Day convolution product $(\mbox{Psh}(\cC,\mcM), \otimes_{Day}, h_{1_\cC})$ is a closed symmetric monoidal category. The internal hom is given by the end formula: 
\[[F,G]_{Day}=\int_{X,Y}Hom_{\mcM}(Hom_{\cC}(-\otimes_\cC X,Y),Hom_{\mcM}(F(X),G(Y))\]
\end{proposition}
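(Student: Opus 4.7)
The plan is to proceed in three stages: (1) verify the symmetric monoidal structure, (2) establish closedness via the adjoint functor theorem for locally presentable categories, and (3) derive the explicit end formula by coend calculus.

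First, I would check that $(\Psh(\cC,\mcM), \otimes^{Day}, h_{1_\cC})$ is a symmetric monoidal category. The associator, unitor and braiding constraints are inherited from those of $(\cC, \otimes_\cC, 1_\cC)$ through the universal property of coends together with Fubini's theorem for iterated coends. For unitality, the co-Yoneda lemma gives
\[
(h_{1_\cC} \otimes^{Day} F)(U) \;=\; \int^{X,Y} \cC(U, X \otimes_\cC Y) \otimes_\mcR \cC(X, 1_\cC) \otimes_\mcR F(Y) \;\simeq\; \int^{Y} \cC(U, Y) \otimes_\mcR F(Y) \;\simeq\; F(U),
\]
and symmetrically on the other side via the braiding.

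Second, I would observe that $- \otimes^{Day} G$ is cocontinuous in each variable, because colimits in $\Psh(\cC,\mcM)$ are pointwise and coends commute with colimits in each slot. Under the hypotheses ($\cC$ small and $\mcM$ presentable), $\Psh(\cC,\mcM)$ is locally presentable, so the adjoint functor theorem for presentable categories produces a right adjoint $[G,-]$ for every $G$.

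Third, to identify $[F,G]$ with the end formula, I would run the standard coend/end manipulation. Starting from
\[
\Hom_{\Psh}(E \otimes^{Day} F, G) \;\simeq\; \int_{U} \Hom_\mcM\!\left( \int^{X,Y} \cC(U, X \otimes_\cC Y) \otimes_\mcR E(X) \otimes_\mcR F(Y),\; G(U)\right),
\]
I would pull the coend out as an end, then use the tensor/cotensor/internal-hom adjunctions provided by the $\Mod_\mcR$-enrichment of $\mcM$ and its closed symmetric monoidal structure to shift $E(X)$ to the outermost position, swap ends via Fubini, and recognise the resulting expression as $\int_{X} \Hom_\mcM(E(X), [F,G](X)) \simeq \Hom_{\Psh}(E, [F,G])$ for the $[F,G]$ defined by the stated end. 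Applying this with $E = h_U$ ranging over representables yields the explicit pointwise formula.

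The main obstacle will be the bookkeeping in the enriched setting: one needs $\mcM$ to be tensored and cotensored over $\Mod_\mcR$ and to be closed symmetric monoidal, so that the nested $\Hom_\mcM(\Hom_\cC(U \otimes_\cC X, Y), \Hom_\mcM(F(X), G(Y)))$ genuinely lives in $\mcM$, and every interchange of ends and coends together with each tensor-hom adjunction must be carefully justified in the enriched context.
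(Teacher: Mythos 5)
Your proposal is correct, but note that the paper does not actually prove this proposition: its ``proof'' is a bare citation to \cite[Proposition 4.1]{MR862873} (Im--Kelly). What you have written out --- coherence constraints via Fubini for coends, unitality via co-Yoneda, cocontinuity of $-\otimes^{Day}G$ plus local presentability of $\mathrm{Psh}(\cC,\Mod_\cRR)$ to get a right adjoint, and then the tensor-hom/Fubini shuffle to identify that adjoint with the stated end --- is essentially the standard argument contained in that reference, so you are supplying the proof the paper outsources. Two small remarks. First, your steps (2) and (3) are logically redundant: once you verify directly that the end formula represents $E\mapsto \Hom_{\mathrm{Psh}}(E\otimes^{Day}F,G)$ (which your step (3) does, and for which it suffices to test on representables $E=h_U$ by density), closedness follows without invoking the adjoint functor theorem; conversely the adjoint functor theorem gives existence without the formula. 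Either route alone suffices. Second, be careful with variance: the paper's convolution formula $\int^{X,Y}\cC(-,X\otimes_\cC Y)\otimes_\cRR F(X)\otimes_\cRR G(Y)$ is written for presheaves (contravariant functors), under which convention the Yoneda reduction $G(U\otimes X)\cong\int_Y\Hom(\cC(Y,U\otimes X),G(Y))$ produces an internal hom $\int_{X,Y}\Hom(\cC(Y,-\otimes X),\Hom(F(X),G(Y)))$, i.e.\ with the arguments of $\Hom_\cC$ in the opposite order from the displayed statement; the formula exactly as printed type-checks for covariant functors, which is the convention of Im--Kelly. Your derivation in step (3) silently follows the printed (covariant) form, so when you ``recognise the resulting expression'' you should fix one convention and carry it through; in the target case $\mcM=\Mod_\cRR$, which is complete, cocomplete and closed symmetric monoidal, all the tensor-hom adjunctions and end/coend interchanges you invoke are available, so this is only a bookkeeping issue, not a gap.
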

\begin{proof}
\cite[Proposition 4.1]{MR862873} 
\end{proof}

As a consequence the category of presheaves with the Day convolution product is  monoidal and  cocomplete, i.e. all the endofunctors $F\otimes^{Day}-$, $-\otimes^{Day} G$ for $F,G\in \mbox{Psh}(\cC,\mcM)$ are cocontinuous. In  \cite{MR862873}  it is observed  that the monoidal structure given by the Day convolution is the free monoidal cocompletion of $\cC$, in the sense that:

\begin{proposition}\label{Universality}
Let $\cD$ be a monoidal cocomplete category and assume the condition in Proposition \ref{Dayinternal}. Then the functor  $[\mbox{Psh}(\cC,\mcM), \cD]\rightarrow [\cC,\cD]$ given by the composition with the Yoneda embedding induces an equivalence of categories between the cocontinuous monoidal functors $\Phi:\mbox{Psh}(\cC,\mcM)\rightarrow \cD$ and the monoidal functors $\phi:\cC\rightarrow \cD$. This equivalence  restricts to the corresponding subcategories of strong monoidal functors. Furthermore the strong monoidal functor $\phi:\mbox{Psh}(\cC,\mcM)\mbox{Psh}(\cC,\mcM)\rightarrow \cD$ are exactly the monoidal functors which are left adjoint.
\end{proposition}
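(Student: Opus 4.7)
The plan is to invoke the standard universal property of $\mathrm{Psh}(\cC,\mcM)$ as the free $\mcM$-enriched cocompletion of $\cC$ and upgrade the resulting equivalence of functor categories to account for monoidal structures. The non-monoidal statement is classical: the Yoneda embedding $y\colon \cC\to\mathrm{Psh}(\cC,\mcM)$ exhibits its codomain as the free cocompletion, so for any cocomplete $\cD$, precomposition with $y$ induces an equivalence between cocontinuous functors $\Phi\colon\mathrm{Psh}(\cC,\mcM)\to\cD$ and functors $\phi\colon\cC\to\cD$, with quasi-inverse given by the left Kan extension $\Phi=\mathrm{Lan}_y\phi$. By the coend formula this reads $\Phi(F)=\int^X F(X)\otimes \phi(X)$, where the tensor is the canonical copower of $\cD$ over $\mcM$ coming from the assumption that $\cD$ is $\mcM$-enriched and cocomplete.

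Second, I upgrade the equivalence to (symmetric) monoidal structures. Starting from a monoidal $\phi$, I show that $\Phi=\mathrm{Lan}_y\phi$ inherits a canonical monoidal structure by the calculation
\begin{align*}
\Phi(F\otimes^{Day}G) &\cong \int^{X,Y} F(X)\otimes G(Y)\otimes \phi(X\otimes_\cC Y) \\
&\cong \int^{X,Y} F(X)\otimes G(Y)\otimes \bigl(\phi(X)\otimes_\cD\phi(Y)\bigr) \\
&\cong \Phi(F)\otimes_\cD\Phi(G),
\end{align*}
where the first iso combines the defining coend for $\otimes^{Day}$ with Fubini and the coend-Yoneda lemma, the second uses the monoidal constraint of $\phi$, and the third uses cocontinuity of $\otimes_\cD$ in each variable (which is part of ``monoidal cocomplete''). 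A parallel check on units yields $\Phi(y(1_\cC))\cong\phi(1_\cC)\cong 1_\cD$. Conversely, since the Yoneda embedding is itself strong monoidal for the Day product (representables tensor to representables, by the coend formula), restriction along $y$ of a cocontinuous monoidal $\Phi$ is automatically monoidal. The isomorphisms above are strict precisely when those of $\phi$ are, giving the restriction to strong monoidal functors. This is exactly the argument of Day and Im--Kelly \cite{MR0272852, MR862873}.

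Finally, I handle the identification of strong monoidal cocontinuous functors with strong monoidal left adjoints. One direction is automatic, as left adjoints preserve colimits. Conversely, $\mathrm{Psh}(\cC,\mcM)$ is locally presentable under the enriched hypotheses on $\mcM$, so any cocontinuous $\Phi$ admits a right adjoint by the (enriched) special adjoint functor theorem. The main obstacle is the coend bookkeeping in Step 2, specifically verifying coherence of the newly constructed constraints on $\Phi$ against the associator and unitors of $\otimes^{Day}$; this reduces to coherence of $\phi$ together with Fubini and the universal property of coends, and is the one place where I would either cite \cite[Prop.~4.3]{MR862873} directly or carry out a string-diagram manipulation rather than attempt a pointwise check.
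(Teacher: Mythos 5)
Your proposal is correct and is essentially a reconstruction of the argument the paper invokes by citation, namely \cite[Proposition 5.1]{MR862873} (Im--Kelly, building on Day), so the two approaches coincide. One small notational caveat: in your displayed chain the middle step uses the monoidal constraint of $\phi$, which for a merely lax monoidal $\phi$ is a morphism rather than an isomorphism, so that arrow should only be written as $\cong$ in the strong monoidal case -- but you clearly track this distinction elsewhere, so the argument stands.
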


\begin{proof}
{\cite[Proposition 5.1]{MR862873}}
\end{proof}

\subsection{Local  presentability of $\coCAlg_{\cR}^{Day}(\cC)$ }

Let $\cR$ be a field and $(\cC,\otimes_{\cC},1_\cC)$ a  small $\Mod_\cR$-enriched  $\cR$-linear symmetric monoidal category.

We extend the theorem  \ref{pshcofreesw} to the category of presheaves of coalgebras for the Day convolution product. The proof in \ref{cofree}  relies on the duality of finite dimensional vector spaces and the proof uses the discrete category $\cC_0$ associated to $\cC$, but the discrete category is not monoidal anymore. Here we have to work a bit more in the argument, which is inspired on the argument in \cite{BARR1974600} (see Theorems [3.1, 3.2] in \it{loc.cit}  \rm ).

\begin{lemma}\label{spsh}
Let $F\in \Mod_k(\cC)$ be a sheaf of $k$-vector spaces and suppose that we are given for every object $x\in Ob(\cC)$ a subspace $G_0(x)\subset F(x)$. There exists a  subpresheaf $G'$ of vector spaces, such that for every object $x\in Ob(\cC)$ $G_0(x)\subset G'(x)$ and  $\#(G')\leq\{\#(G),\aleph_0\}$.
\end{lemma}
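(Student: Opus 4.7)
The plan is to enlarge $G_0$ into a subpresheaf by iteratively closing it under pullback along all morphisms of $\cC$.

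First, construct an ascending chain of subspace-valued assignments $G_0 \subset G_1 \subset G_2 \subset \cdots$ by setting, for each $n \geq 0$ and each $x \in \cC$,
\[
G_{n+1}(x) := G_n(x) + \sum_{y \in \cC}\;\sum_{f \in \cC(x,y)} F(f)\bigl(G_n(y)\bigr) \;\subset\; F(x).
\]
Since $\cC$ is small, the double sum is indexed by a set and $G_{n+1}(x)$ is a well-defined $k$-subspace of $F(x)$. Define $G'(x) := \bigcup_{n \geq 0} G_n(x)$; as a filtered union of subspaces it is again a $k$-subspace of $F(x)$, and it clearly contains $G_0(x)$.

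Second, verify that $G'$ is a subpresheaf of $F$. Given $f \in \cC(x,y)$ and $v \in G'(y)$, choose $n$ with $v \in G_n(y)$; then $F(f)(v) \in G_{n+1}(x) \subset G'(x)$ by construction. Hence the restrictions $F(f)|_{G'(y)} \colon G'(y) \to G'(x)$ assemble into a subfunctor $G' \hookrightarrow F$ of $k$-vector spaces.

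Third, bound the cardinality. The $k$-span of a set $S$ of vectors has cardinality at most $\max\{\#(S),\,\#(k),\,\aleph_0\}$, and for every $x$ there are at most $\#(\cC)$ morphisms out of $x$. An easy induction on $n$ then gives
\[
\#(G_n) \;\leq\; \max\{\#(G_0),\,\#(\cC),\,\#(k),\,\aleph_0\}
\]
for all $n$, and the countable union $G' = \bigcup_{n} G_n$ preserves this estimate, yielding the stated bound (with the auxiliary factors $\#(\cC)$ and $\#(k)$ absorbed under the standing smallness hypotheses on $\cC$ and $k$).

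There is no deep conceptual obstacle: the construction is the standard ``closure under restriction'' argument. The only point requiring care is the cardinal arithmetic in the third step, where one must track that each inductive stage contributes only a bounded multiplicative factor coming from the morphisms of $\cC$ and from finite $k$-linear combinations, so that the transfinite process stabilises at the countable stage without exceeding $\max\{\#(G_0),\aleph_0\}$.
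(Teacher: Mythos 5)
Your construction is the same as the paper's: both proofs iteratively enlarge $G_0$ by adjoining the images of the restriction maps $F(f)$ applied to the previous stage, and then take the union over $n$ to obtain a subpresheaf closed under all restrictions. Your version is slightly more explicit about the cardinality bookkeeping (which the paper leaves implicit), but there is no substantive difference in the argument.
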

\begin{proof}
By induction we define a collection of subspaces $G_n(x)\subset G(x)$ for every $x\in Ob(\cC)$ . For every $y\in\cC$  and $f\in\coprod_{x\in\cC}\mbox{Hom}_\cC(y,x)$ define the subspace $G_1(y)$ generated by $\langle f^{*}(G_0(x)), G_0(y)\rangle$, by definition $G_0(y)\subset G_1(y)$ but the restrictions maps $f^{*}$ are not yet defined. Assume $G_n(x)$ is given then  define $G_{n+1}$ as $G_{n+1}(x):=\langle f^{*}(G_n(x)),G_n(y)\rangle$  and take  $G':=\colim_{n}G_n(x)$, which is the desired sub-presheaf. 
\end{proof}

\begin{proposition}\label{pure}
Let $M$ and $N$ be  two presheaves of vector spaces and $M_0\subset M$ sub-presheaf of vector spaces. Then there exists $M'$ a presheaf of vector spaces such that $M_0\subset M'$  and $M'\otimes^{Day} N\rightarrow M\otimes^{Day} N$ is a monomorphism. Furthermore $\#(M')\leq\{\#(M_0), \, \#(\cC),\,\aleph_0\}$.
\end{proposition}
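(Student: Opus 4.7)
The plan is to obtain $M'$ by a countable iterative enlargement of $M_0$: at each stage we add to the current approximation exactly the $M$-elements needed to witness, in the coequalizer presentation \eqref{coeq}, all relations that force elements of (current approximation)$\otimes^{Day} N$ to vanish in $M\otimes^{Day} N$. Concretely, I would set $M_0' := M_0$ and, assuming $M_n'\subseteq M$ is constructed, consider the kernel
\[
K_n \;:=\; \ker\!\bigl(M_n'\otimes^{Day}N \longrightarrow M\otimes^{Day}N\bigr).
\]
For each $U\in\cC$ and each $\omega\in K_n(U)$, the coequalizer description of the Day convolution gives a finite collection of data $\{\psi_j:U\to X_j'\otimes Y_j',\;\alpha_j:X_j\to X_j',\;\beta_j:Y_j\to Y_j',\;s'_j\in M(X_j),\;t'_j\in N(Y_j)\}$ realising $\omega$ as $(d_0-d_1)$ applied to $\sum_j\psi_j\otimes(\alpha_j,\beta_j)\otimes(s'_j\otimes t'_j)$. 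We then let $\widetilde M_n$ be the $k$-subspace generated by $M_n'$ together with all the witness elements $s'_j\in M(X_j)$ arising in this way (for all $U$ and all $\omega\in K_n(U)$), and apply Lemma \ref{spsh} to $\widetilde M_n\subseteq M$ to obtain a genuine sub-presheaf $M_{n+1}'\subseteq M$ containing $\widetilde M_n$ and closed under restriction.

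Setting $M':=\colim_n M_n' = \bigcup_n M_n'$, injectivity of $M'\otimes^{Day}N\to M\otimes^{Day}N$ is then immediate: any element of $(M'\otimes^{Day}N)(U)$ is represented by a finite sum, so it lies in $(M_n'\otimes^{Day}N)(U)$ for some $n$; if it maps to zero in $M\otimes^{Day}N$, then by construction the $M$-witnesses used to kill it were added to $M_{n+1}'$, so it already vanishes in $(M_{n+1}'\otimes^{Day}N)(U)\subseteq (M'\otimes^{Day}N)(U)$.

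For the cardinality estimate, set $\kappa:=\max(\#(M_0),\#(\cC),\aleph_0)$. The key point is that each $\omega\in K_n$ only needs finitely many witness elements of $M$, and the number of ``essentially different'' witnesses we must consider at stage $n$ is controlled by $\#(M_n')$ and $\#(\cC)$ because what drives the enlargement are relations between tuples of elements already in $M_n'$: one inducts on finite tuples $(s_1,\dots,s_k)$ in $M_n'$ together with morphisms $\alpha_j$ in $\cC$ and records the preimages $s'_j\in M(X_j')$ appearing in any coequalizer witness. Since the set of such finite tuples has size at most $\kappa$, and Lemma \ref{spsh} contributes only an additional factor of $\max(\cdot,\aleph_0)$, a straightforward induction gives $\#(M_n')\leq\kappa$, hence $\#(M')\leq\kappa\cdot\aleph_0=\kappa$.

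The main obstacle is precisely this last point: verifying that the witness data needed at each stage can be chosen so as to be parameterised by finite data in $M_n'$ and $\cC$ alone, independently of the size of $N$. I would expect to handle this by observing that two coequalizer presentations of the same $\omega\in K_n$ that differ only in their $N$-parts can share the same $M$-witnesses, so one really only needs to run the construction over finite tuples of $M_n'$-elements together with a cofinal system of morphisms in $\cC$; all remaining freedom lives on the $N$-side and is irrelevant for the enlargement of $M_n'$. Once this ``$N$-independence of witnesses'' is proved cleanly, the cardinality bound and the injectivity assertion follow, completing the proposition.
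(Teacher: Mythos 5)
Your construction is essentially the one in the paper: iterate the process of taking the kernel $K_n=\ker(M_n'\otimes^{Day}N\to M\otimes^{Day}N)$, extracting for each kernel element the finitely many $M$-side elements appearing in a coequalizer presentation of its vanishing, closing up under restriction via Lemma \ref{spsh}, and passing to the union; the injectivity of $M'\otimes^{Day}N\to M\otimes^{Day}N$ then follows exactly as you say, since any element of the Day convolution is a finite sum and hence already lives at a finite stage, where it is killed at the next stage. So the approach and the injectivity argument match the paper's.

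The one point you flag as an obstacle --- that the witnesses must be parameterised by data in $M_n'$ and $\cC$ alone, not by $\#(N)$ --- is indeed the only delicate step, and you are right to single it out: a priori $K_n(U)$ sits inside $(M_n'\otimes^{Day}N)(U)$, whose cardinality involves $\#(N)$, so running over all of $K_n$ (or even a basis of it, as the paper does) does not obviously respect the bound $\max(\#(M_0),\#(\cC),\aleph_0)$. The paper's own proof does not address this either; it simply enumerates a basis $\{m_{i,x}\}$ of the kernel and adds the corresponding $r$'s and $s$'s. The correct repair is the one you sketch, which is how Barr's original module-theoretic argument (the model for this proof) works: purity is recast as an $N$-free ``equational'' closure condition --- for every finite tuple of elements of $M_n'$ together with a finite diagram of morphisms of $\cC$ admitting a system of preimages/solutions in $M$, one fixed choice of solution is adjoined --- and the number of such finite systems is at most $\max(\#(M_n'),\#(\cC),\aleph_0)$, while the $N$-side of any coequalizer witness is irrelevant to which $M$-elements must be added. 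Note only that the injectivity assertion does not actually depend on this step (it holds stage by stage regardless of cardinality); it is solely the cardinality estimate that requires it. With that step carried out, your argument is complete and agrees with the intended proof.
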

\begin{proof}
For simplicity, we write $-\otimes-$ instead of $-\otimes^{Day}-$.   Let $G_0=ker(M_0\otimes N\rightarrow M\otimes N)$  be the kernel of the canonical map. Then for each $x\in\cC$  consider $\{m_{i,x}\}$ a basis for $G_0(x)$. There exists a finite number of objects $(u_{i,j,x},v_{i,j,x})\in Ob(\cC\times\cC)$ and there are maps $x\rightarrow u_{i,j,x}\otimes v_{i,j,x}$ such that $m_{i,x}=\sum_{i,j,x} r_{i,j,x}\otimes s_{i,j,x}$ with $r_{i,j,x}\in M_0(u_{i,j,x})$ and $s_{i,j,x}\in N(v_{i,j,x})$. Since $m_{i,x}$ goes to zero in $(M\otimes N)(x)$, there exists a  finite number of maps $(f_{i,j,x},g_{i,j,x})\in Mor (\cC\times\cC)$ such that each pair fits in a commutative  diagram of the form

\[
\xymatrix{
x\ar[r]^-\alpha\ar[rd]_-{\alpha'}  &u'\otimes v'\ar[d]^-{f\otimes g}\\
&u\otimes v}.
\] and

\begin{equation}\label{Purecondition}
\sum_{i,j,x} (f^{*}_{i,j,x}\otimes g^{*}_{i,j,x})(r_{i,j,x}\otimes s_{i,j,x})=0
\end{equation}

Applying Lemma \ref{spsh} we can construct a subpresheaf $M_1$  such that for each $x\in Ob(\cC)$,  $M_1(x)$ contains the subspace generated by 

\[M_0(x) \coprod( \coprod_{x=u_{i,j,y}\atop x=v_{i,j,z}} \{r_{i,j,y},s_{i,j,z}\}),\] \it{i.e} \rm  if $x$ happen to be equal to $u_{i,j,y}$ for some $y\in\cC$ or $v_{i,j,z}$ for some $z\in\cC$.

By construction $G_0(x)$ goes to zero in $(M_1\otimes N)(x)$.  Assume that $M_{n-1}$ is constructed then we repeat the argument. We construct $M_n$  taking $G_{n-1}(x)=ker(M_{n-1}\otimes N)(x)\rightarrow (M\otimes N)(x) $ and the composition is equal to zero
\[\xymatrix{
G_{n-1}(x)\ar@{^{(}->}[r] \ar[rd] &(M_{n-1}\otimes N)(x)  \ar[d] \\
&(M_n\otimes N)(x)}
\]  we get a chain of subpresheaves $M_0\subset M_1\subset\dots\subset M_n$. Set $M:=\colim_{n}M_n$, this  is the desired presheaf.

\end{proof}

\begin{definition}
Let $M$ be  two presheaf of vector spaces and $M'\subset M$ a sub-presheaf of vector spaces. We say that $M'$ is a \it{pure sub-presheaf}    \rm   if  $M'\otimes^{Day} N\rightarrow M\otimes^{Day} N$ is a monomorphism
\end{definition}

The presheaf constructed in \ref{pure} is a pure sub-presheaf.

Let $F\in\mbox{ coCAlg}^{Day}_{K}(\cC)$ be a presheaf of coalgebras. A subpresheaf of vector spaces $M\subset F$ is called invariant if $\Delta_F(M)\subset Im(M\otimes^{Day} M\rightarrow F\otimes^{Day} F)$.

\begin{proposition}\label{invariant}
Given  $F\in\mbox{ coCAlg}^{Day}_{K}(\cC)$ and  $M_0$ a subpresheaf of vector spaces. There exists a invariant  sub-presheaf $M'$ of vector spaces such  that  $M_0\subset M'$ and  $\#(M')\leq\{\#(M_0), \#(\cC),\aleph_0\}$.
 \end{proposition}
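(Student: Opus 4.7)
The plan is to mimic the iterative construction used in Lemma~\ref{spsh} and Proposition~\ref{pure}, this time enlarging the subpresheaf so that the coproduct $\Delta_F$ of every element has a representative sitting inside the enlarged subpresheaf. Concretely, I would define an increasing sequence of subpresheaves of vector spaces
\[ M_0 \subset M_1 \subset M_2 \subset \cdots \subset F, \] with $M_{n+1}$ built from $M_n$ as follows. For every object $x \in \cC$ and every element $m \in M_n(x)$, the element $\Delta_F(m) \in (F\otimes^{Day} F)(x) = \colim_{x\to u\otimes v} F(u)\otimes F(v)$ is represented, via the coequalizer \eqref{coeq}, by a \emph{finite} formal sum
\[ \Delta_F(m) \;=\; \sum_{i} (x \to u_{i,m}\otimes v_{i,m},\; f_{i,m}\otimes g_{i,m}), \qquad f_{i,m}\in F(u_{i,m}),\ g_{i,m}\in F(v_{i,m}). \] Choose one such representative for each $m$. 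Let $\widetilde M_{n+1}$ be the collection of subspaces obtained from $M_n$ by adjoining $f_{i,m}$ to the section at $u_{i,m}$ and $g_{i,m}$ to the section at $v_{i,m}$ for every such choice. Then apply Lemma~\ref{spsh} to $\widetilde M_{n+1}$ (viewed as a family of subspaces of $F$) to obtain a genuine sub-presheaf $M_{n+1}\subset F$ containing $\widetilde M_{n+1}$.

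Setting $M' := \colim_n M_n = \bigcup_n M_n$, I claim $M'$ is invariant. Indeed, any element $m\in M'(x)$ lies in some $M_n(x)$, and by construction the chosen representative $\sum_i f_{i,m}\otimes g_{i,m}$ of $\Delta_F(m)$ has all its tensor factors in $M_{n+1}\subset M'$; this exhibits $\Delta_F(m)$ as the image of an element of $(M'\otimes^{Day} M')(x)\to (F\otimes^{Day} F)(x)$, as required.

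For the cardinality estimate: passing from $M_n$ to $\widetilde M_{n+1}$ adjoins, for each section and each element, only finitely many new vectors, so $\#(\widetilde M_{n+1})\le \max(\#(M_n),\aleph_0)$. Lemma~\ref{spsh} then guarantees $\#(M_{n+1})\le \max(\#(M_n),\#(\cC),\aleph_0)$. Iterating countably many times and taking the union preserves the bound, yielding $\#(M')\le \max(\#(M_0),\#(\cC),\aleph_0)$, as desired.

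The only delicate point, and the one that genuinely uses the Day machinery, is that a representative of $\Delta_F(m)$ is always a \emph{finite} sum indexed over a finite set of pairs $(u,v)$ with maps $x\to u\otimes v$; this is immediate from the presentation of the Day convolution as the coequalizer \eqref{coeq}. Once this is granted, the argument is a straightforward countable closure: both the coalgebra-structure closure (handled here) and the presheaf-structure closure (handled by Lemma~\ref{spsh}) preserve the cardinality bound, and neither ever introduces uncountable amounts of data at a single step. Hence the main obstacle reduces to bookkeeping the two closure operations simultaneously, which is what the two-level induction above accomplishes.
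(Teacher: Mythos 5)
Your proposal is correct and follows essentially the same route as the paper: represent $\Delta_F$ of each element (the paper uses a basis of each section, you use all elements, which only changes harmless cardinality bookkeeping) by a finite sum in the Day coequalizer, adjoin the finitely many tensor factors, close up to a genuine sub-presheaf via Lemma~\ref{spsh}, iterate countably often, and take the colimit. The key observation you isolate --- that elements of the Day convolution admit finite representatives --- is exactly the point the paper's proof relies on as well.
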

 
\begin{proof}
For each $x\in Ob(\cC)$ let us  fix a basis $\{e_{x,i}\}$ for the subspace $M_0(x)\subset F(x)$. For each $e_{x,i}$ there exists a finite number of object $u_{x,i,j}, v_{x,i,j}\in\cC$ and a finite number of elements $m_{u,i,j}\in F(u_{x,i,j})$ and $n_{u,i,j}\in F(v_{x,i,j})$  such that $\Delta_F(F(x))\in (F\otimes F)(x)$ is contained in the subspace generated by  $\{m_{u,i,j}\otimes n_{v,i,j}\}$. Applying Lemma \ref{spsh} we construct $M_1$ the subsheaf of $F$ generated by $M_0(x)$ and $\coprod_{i,j,x} \{m_{i,j,x},n_{i,j,x}\}$ for all $x \in Ob(\cC)$. By construction $\Delta_F(M_0)(x)\subset(M_1\otimes M_1)(x)$, and $F_0$ is a subsheaf of  $M_1$, inductively we construct  $M_n$ such that $\Delta_F(M_{n-1})(x)\subset(M_n\otimes M_n)(x)$ and taking  $M'=\colim_{n}M_n$ we get a presheaf which is invariant under the diagonal map. 

\end{proof}

\begin{theorem}\label{lpstronggen}
Let $F\in\mbox{ coCAlg}^{Day}_{R}(\cC)$ and $M_0$ a presheaf of vector spaces of $\cC$. Then there exists a subcoalgebra $F'$ such that $M_0\subset F'$  and $\#(F')\leq\{\#(M_0), \#(\cC),\aleph_0\}$
\end{theorem}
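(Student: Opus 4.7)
The plan is to build $F'$ as the filtered colimit of an ascending chain of sub-presheaves of vector spaces $M_0 \subset M_1 \subset M_2 \subset \cdots \subset F$ obtained by alternating the constructions of Propositions~\ref{invariant} and~\ref{pure}. At an even stage $2k$ I would apply Proposition~\ref{invariant} to produce $M_{2k+1} \supset M_{2k}$ with
\[
\Delta_F(M_{2k}) \;\subset\; \operatorname{Im}\bigl(M_{2k+1} \otimes^{Day} M_{2k+1} \to F \otimes^{Day} F\bigr),
\]
while at an odd stage $2k+1$ I would apply Proposition~\ref{pure} with $N = F$ to produce $M_{2k+2} \supset M_{2k+1}$ such that $M_{2k+2} \otimes^{Day} F \to F \otimes^{Day} F$ is a monomorphism. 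Setting $F' := \colim_n M_n$, the cardinality bound will follow because each application enlarges the subpresheaf by at most $\max\{\#(\cC), \aleph_0\}$, and a countable union of sets each of cardinality at most $\max\{\#(M_0), \#(\cC), \aleph_0\}$ respects that bound.

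To equip $F'$ with a coalgebra structure restricted from $F$, I plan to check that $\Delta_F|_{F'}$ factors through a canonical monomorphism $F' \otimes^{Day} F' \hookrightarrow F \otimes^{Day} F$. For the image containment, any element $x \in F'(U)$ lies in some $M_{2k}(U)$ by filteredness, and the even-stage construction places $\Delta_F(x)$ in the image of $M_{2k+1} \otimes^{Day} M_{2k+1} \to F \otimes^{Day} F$, hence in that of $F' \otimes^{Day} F'$. For the monomorphism property, I would use that Day convolution preserves colimits in each variable (Proposition~\ref{Dayinternal}) to rewrite
\[
F' \otimes^{Day} F' \;=\; \colim_k \bigl(M_{2k+2} \otimes^{Day} M_{2k+2}\bigr),
\]
factor this map through $F' \otimes^{Day} F \to F \otimes^{Day} F$, and invoke the exactness of filtered colimits in presheaves of $\cR$-vector spaces. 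The coassociativity, cocommutativity, and counitality of $(F', \Delta_{F'}, \varepsilon_F|_{F'})$ are then inherited from those of $F$.

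The hardest point, which dictates the alternating scheme, is verifying that the one-sided purity produced at odd stages actually suffices to yield a monomorphism $F' \otimes^{Day} F' \to F \otimes^{Day} F$ at the colimit; this will rely on bicontinuity of the Day convolution together with exactness of filtered colimits of presheaves of vector spaces. The rest of the argument consists of routine bookkeeping with cardinal arithmetic and verifying that the inherited structure maps satisfy the coalgebra axioms.
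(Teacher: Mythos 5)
Your proposal is correct and follows essentially the same route as the paper's proof: alternately applying Proposition \ref{pure} (with $N=F$) and Proposition \ref{invariant}, taking the filtered colimit $F'=\colim_n M_n$, and restricting the structure maps of $F$ along the resulting monomorphism $F'\otimes^{Day}F'\to F\otimes^{Day}F$. The point you flag as hardest---that one-sided purity with respect to $N=F$ at each finite stage suffices to give injectivity of $F'\otimes^{Day}F'\to F\otimes^{Day}F$ after passing to the colimit---is precisely the step the paper also dispatches with a one-line appeal to Proposition \ref{pure}, so your treatment is no less complete than the original.
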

\begin{proof}
Let  $F_1$ be the pure sub-presheaf of vector spaces given by Lemma \ref{pure} which contains $F_0$ and let $F_2$ be the invariant sub-presheaf of vector spaces which contains $F_1$, iterating both lemmas  $F_{n}$ is defined as the pure sub-presheaf associated to $F_{n-1}$ when $n$ is odd and $F_{n}$ the invariant sub-presheaf associated  $F_{n-1}$ when $n$ is even. Set $F'=\colim_{n}F_n$, it is clear that $F'$ is a pure and invariant. It remains to show that it is a subcoalgebra. By Lemma \ref{pure} $F'\otimes F'\rightarrow F\otimes F$ is injective. Thus the comultiplication map is induced by the comultiplication on $F$ and $F'\otimes F'\otimes F'\rightarrow F\otimes F\otimes F$ is injective, which give us the coassociativity. A similar observation is enough for the cocommutativity. The counit map is just the composition $F'\hookrightarrow F\xrightarrow{\varepsilon} k$.
\end{proof}

\begin{corollary}\label{lpres}
Let $\cC$ be a small category. The category of coalgebras  $\mbox{ coCAlg}^{Day}_{k}(\cC)$ is locally presentable, with strong generators given by $\{F\in \mbox{ coCAlg}^{Day}_{k}(\cC)  : \#(F)\leq \mbox{max}(\#(C),\aleph_0 )\}$.

\end{corollary}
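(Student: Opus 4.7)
The strategy is to invoke the standard characterization of locally presentable categories (e.g.\ \cite[Theorem 1.20]{jiris1994}): a category is locally $\lambda$-presentable provided it is cocomplete and admits a set of $\lambda$-presentable objects whose closure under $\lambda$-filtered colimits covers the whole category; any such generating set is automatically a strong generator. Set $\lambda := \max(\#(\cC),\aleph_0)$ and let $\mathcal{G}$ denote a set of isomorphism-class representatives of coalgebras $F \in \coCAlg^{Day}_{k}(\cC)$ with $\#(F) \leq \lambda$; this is a set because subpresheaves of a fixed presheaf form a set, so there is only a set of possible isomorphism types of subcoalgebras of the cofree coalgebra on a vector space of cardinality $\leq \lambda$.

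The first step is cocompleteness. Proposition~\ref{Dayinternal} shows that $-\otimes^{Day}-$ preserves colimits in each variable (being a closed bifunctor), so the forgetful functor $\coCAlg^{Day}_{k}(\cC) \to \Mod_{k}(\cC)$ creates colimits by the usual formal argument; in particular the source category has all small colimits. The second step is to show that every $F$ is a $\lambda$-filtered colimit of members of $\mathcal{G}$. Given a section $x$ of $F$, Theorem~\ref{lpstronggen} applied to the subpresheaf generated by $x$ produces a subcoalgebra $F_x \subseteq F$ with $F_x \in \mathcal{G}$; given finitely many such $F_{x_1},\dots,F_{x_n}$, a further application of Theorem~\ref{lpstronggen} to their sum inside $F$ yields a common enlargement still in $\mathcal{G}$. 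Hence $\{F_\alpha \in \mathcal{G} : F_\alpha \subseteq F\}$ is a $\lambda$-directed poset whose colimit exhausts every section of $F$, and therefore equals $F$.

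Finally I would verify that each $F_\alpha \in \mathcal{G}$ is $\lambda$-presentable in $\coCAlg^{Day}_{k}(\cC)$. Since colimits are computed on underlying presheaves of vector spaces, the classical presentability bound in $\Mod_{k}(\cC)$ shows that a morphism of presheaves $F_\alpha \to \colim_i G_i$ out of a $\lambda$-filtered diagram of coalgebras factors through some $G_j$ as a map of presheaves; applying Proposition~\ref{Dayinternal} to commute $\otimes^{Day}$ with the filtered colimit and enlarging $j$ if necessary, this factorization can be arranged to respect the comultiplications and counits. Combining the three steps produces local presentability and, as a byproduct, identifies $\mathcal{G}$ as the announced strong generator. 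The main obstacle will be the final $\lambda$-presentability step, because one has to synchronize the sectionwise factorization of the underlying module map with the sectionwise factorization of the coproduct; this is precisely where the closedness of the Day convolution (and hence its commutation with $\lambda$-filtered colimits) is indispensable, and it is the reason Theorem~\ref{lpstronggen} is formulated with the cardinality bound $\max(\#(\cC),\aleph_0)$.
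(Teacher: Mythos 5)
Your argument is correct in substance and rests on the same key lemma as the paper (Theorem \ref{lpstronggen}, used to produce a bounded subcoalgebra through any prescribed subpresheaf), but it routes through a different characterization of local presentability. The paper verifies the strong-generator axioms directly: given a parallel pair $\eta\neq\psi$ it finds a bounded subcoalgebra on which they still differ, and given a proper subobject $L\subset K$ it finds a bounded subcoalgebra of $K$ not factoring through $L$; it then appeals to the Ad\'amek--Rosick\'y criterion (cocomplete plus a strong set of generators). You instead exhibit every coalgebra as a $\lambda$-filtered colimit of its bounded subcoalgebras and check that these are presentable objects, using cocontinuity of $-\otimes^{Day}-$ (Proposition \ref{Dayinternal}) to push the sectionwise factorization of the underlying module map to a factorization of coalgebra maps at a later stage of the diagram. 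Your route is somewhat longer but has the merit of making explicit the presentability of the generators, which the cited characterization actually requires and which the paper's proof leaves implicit. One small correction: a presheaf of vector spaces with $\#(F)\leq\lambda=\max(\#(\cC),\aleph_0)$ is in general only $\lambda^{+}$-presentable, not $\lambda$-presentable (already a vector space of countably infinite dimension fails to be finitely presentable), so your final step should be run with $\lambda^{+}$ throughout; the poset of bounded subcoalgebras is in fact $\lambda^{+}$-directed, so this costs nothing and the conclusion ``locally presentable'' is unaffected.
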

\begin{proof}
Let $F_1 \overset{\eta}{\underset{\psi}{\rightrightarrows}} F_2$ be the maps of presheaves of coalgebras with $\eta\neq \psi$. Then there is some $x\in Ob(C)$ and $s\in F(x)$ with $\eta(x)\neq \psi(x)$. Let $M(x)$ be the presheaf vector spaces associated to the subspace of dimension one $\langle s\rangle$  then by the previous theorem exits $F_0\subset F_1$ a presheaf of coalgebras such that $M(x)\subset F_0(x)$ then the restriction of $\eta$ and $\psi$ to $F_0$ are to different maps. The generators are strong because for ever $K\in \mbox{ coCAlg}^{Day}_{K}(\cC)$  and $L\subset K$   a proper subpresheaf  there exists $x\in Ob(\cC)$ such that $L(x)\subset K(x)$ is a proper subspace. Let $s\in K(x)\setminus L(x)$ then take $G_s\subset K$ the subsheaf associated to the vector space generated by $s$ and this map does not factorize through $K$.
\end{proof}

\begin{theorem}\label{DayFTC}
The underlying functor $U:\coCAlg^{Day}(\cC)\rightarrow \Mod_{k}(\cC)$ has a right adjoint and is comonadic.
\end{theorem}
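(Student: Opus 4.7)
The plan is to first build the right adjoint $CF$ by appealing to the adjoint functor theorem, and then verify Beck's comonadicity criteria. By Corollary~\ref{lpres}, the category $\coCAlg^{Day}(\cC)$ is locally presentable. The underlying functor $U$ preserves small colimits by the same mechanism as in Proposition~\ref{pshcofreesw}: the Day convolution is cocontinuous in each variable (Proposition~\ref{Dayinternal}), so $(\colim_i C_i)\otimes^{Day}(\colim_j C_j)=\colim_{(i,j)}(C_i\otimes^{Day} C_j)$, and the comultiplications $\Delta_{C_i}$ assemble via the universal property into a comultiplication on the underlying colimit. The adjoint functor theorem for locally presentable categories then yields a right adjoint $CF:\Mod_{k}(\cC)\to\coCAlg^{Day}(\cC)$.

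For comonadicity I would invoke Beck's theorem: it suffices to show that (i) $U$ reflects isomorphisms, and (ii) $U$ creates equalizers of $U$-split pairs. Condition (i) is a direct computation. If $f:C\to D$ is a coalgebra morphism and $g$ is its inverse in $\Mod_{k}(\cC)$, then $\Delta_D=(f\otimes f)\circ\Delta_C\circ g$, from which
\[(g\otimes g)\circ\Delta_D=(g\otimes g)\circ(f\otimes f)\circ\Delta_C\circ g=\Delta_C\circ g,\]
and similarly for the counits, so $g$ is a coalgebra morphism and is inverse to $f$ in $\coCAlg^{Day}(\cC)$.

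For (ii), suppose $f,g:C\rightrightarrows D$ in $\coCAlg^{Day}(\cC)$ and that the pair $U(f),U(g)$ fits into a split fork $E\xrightarrow{e} U(C)\rightrightarrows U(D)$ with splittings $s,t$ satisfying $se=1_{E}$, $ft=1_{D}$, $gt=es$. The tensored splittings $s\otimes s$ and $t\otimes t$ satisfy the analogous identities, so $e\otimes e$ is a split (hence absolute) equalizer of $f\otimes f$ and $g\otimes g$; the same argument applies to $e^{\otimes 3}$. Since $f,g$ are coalgebra maps, $(f\otimes f)\circ\Delta_C\circ e=\Delta_D\circ fe=\Delta_D\circ ge=(g\otimes g)\circ\Delta_C\circ e$, so $\Delta_C\circ e$ factors uniquely through $e\otimes e$ as some $\Delta_E:E\to E\otimes E$. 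Setting $\varepsilon_E:=\varepsilon_C\circ e$, the coalgebra axioms on $E$ follow from those on $C$ using that $e^{\otimes 2}$ and $e^{\otimes 3}$ are monomorphisms. Any coalgebra morphism $X\to C$ equalizing $f,g$ factors uniquely through $e$ in $\Mod_{k}(\cC)$, and this factorization automatically respects the coalgebra structure; thus $e:E\to C$ is the equalizer of $(f,g)$ in $\coCAlg^{Day}(\cC)$ and $U$ preserves it.

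The main obstacle is the bookkeeping in (ii): verifying that the splittings tensor properly to exhibit $e^{\otimes n}$ as a split equalizer, and then transferring each coalgebra axiom on $C$ to $E$ via the resulting monomorphicity. Once (i) and (ii) are established, Beck's comonadicity theorem concludes the result.
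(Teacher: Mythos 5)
Your proposal is correct and is essentially the argument the paper invokes by citation: the paper's entire proof of Theorem \ref{DayFTC} is a pointer to Barr--Wells, whose proof is exactly your combination of local presentability (Corollary \ref{lpres}) plus cocontinuity of $U$ to produce the cofree right adjoint, followed by Beck's comonadicity criterion verified via the fact that split equalizers, having their splittings, are preserved by $\otimes^{Day}$. One transcription slip to fix: the split-fork identities should read $se=1_E$, $tf=1_{U(C)}$, $tg=es$ (as written, $gt=es$ compares maps with different domains and $ft=1_D$ is not the correct dual), after which your tensoring argument and the transfer of the coalgebra axioms to $E$ go through verbatim.
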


\begin{proof}
The proof follows word by word \cite[Theorem 4.1]{BarrMichael1985Ttat}
\end{proof}

\begin{rmk}
The category of coalgebras is cartesian closed, with the product given by the Day convolution product of two coalgebras. 
\end{rmk}

We would like to get a more refined description  of category  $\coCAlg^{Day}(\cC)$. Since the coalgebra objects in this category are not section-wise coalgebras, we can not extend the results from Section \ref{coalgclassif}  easily. As well, the existence of a left adjoint functor $\Psh(\cC')\rightarrow \coCAlg^{Day}_{\cR}(\cC)$  is not totally straight forward.  Nevertheless, if we restrict to the category of coalgebra objects in $\PST$ (the category of presheaves with transfers of Voevodsky) we are able to get a nice description. We will discuss these problem in a future paper.

\section{Recollections of  Motivic Spaces}\label{MotSp}

The theory of motivic spaces and unstable motivic homotopy theory was set up by Morel and Voevodsky in \cite{MorelVoevodsky}. Roughly speaking a motivic space (resp. an \'etale motivic) is a simplicial presheaf which is $\mathbb{A}^1$-homotopy invariant and satisfies Nisnevich descent (resp. \'etale descent). This has been widely discussed in the literature. In the following section we recall the local and global model structures for simplicial presheaves  studied in \cite{MR906403} and \cite{MorelVoevodsky}.  Furthermore, we state results by Dugger \cite{MR2034012} and , which allows to see Jardine's model structure as a left Bousfield localization the local objects in Jardine's model structures satisfy hyperdescent.


\subsection{Recollection in motivic model structures}


\subsection{Generalities and basic definitions}
Denote by  $\sS$ the category of simplicial sets endowed with the  Kan model structure; where a map $f$ is a weak equivalence if the induced  map of geometric realizations $|f|:|X|\rightarrow|Y|$ is an homotopy equivalence of topological spaces, it is a cofibrations if it is a monomorphism levelwise and is a fibrations if it is a Kan fibration (see \cite[Example A.2.7.3]{lurie2009higher})

Before introduce the motivic model structure, let us give the definition of right and left induced model structures. Left induced model structures are particularly interesting for us in order to induce a model structure in the category of coalgebras.

\subsubsection{Right and left induced model structures}

Let $\cM$ and $\cN$ be complete and cocomplete categories. Assume that either $\cM$ or $\cN$ is a model category and let $\xymatrix{\cM\ar@<0.5ex>[r]^{L} &\cN \ar@<0.5ex>[l]^{R}}$  be a pair of adjoint functors. A standard question is wether we can use $R$ or $L$ to build a model structure on $\cN$ or $\cM$. More specifically we can introduce the notion of \it{left} \rm or \it{right} \rm  \it{induced model category}.\rm

\begin{definition}
Let $\xymatrix{\cM\ar@<0.5ex>[r]^{L} &\cN \ar@<0.5ex>[l]^{R}}$ be an adjoint pair of functors.

\begin{enumerate}
\item Assume that $\cM$ is a model category and  $\cN$ is  a complete and cocomplete category. If the the classes of morphisms  $R^{-1}(Fib)$ and $R^{-1}(W)$ satisfy the axioms of a model category. Then it is called the \it{right-induced model structure}.\rm 

\item Assume that $\cN$ is a model category and  $\cM$ is  a complete and cocomplete category. If the the classes of morphisms  $L^{-1}(Cof)$ and $L^{-1}(W)$ satisfy the axioms of a model category. Then it is called the \it{left-induced model structure}.\rm 
\end{enumerate}
\end{definition}

\begin{example}

The category of simplicial $\cR$-modules $\sMod_\cR$ is endowed with the  right-induced model structure  from the Kan model structure on $\sS$ by the forgetful functor $U:\sMod_\cR\rightarrow \sS$.       
\end{example}

In \ref{leftcoal} we are going to study an example of left induced model structure, the homotopy category of coalgebras.

Via the Dold-Kan equivalence   $N:\sMod_\cR\rightleftarrows Cplx(\cR)_{\geq 0}:\Gamma$ we give $Cplx(\cR)_{\geq 0}$ a model structure; in this model structure, the weak equivalences are the quasi-isomorphisms.

Let $\cC$ be a small category, by Theorem \ref{proinj} it is possible to endow the category $s\Psh(\cC)$ and $\sMod_{\cR}(\cC)$ with the injective and the projective model structures. Moreover, we get the following commutative diagram.

\[\xymatrix{s\Psh(\cC)_{proj} \ar@<0.5ex>[r]^{\cR} \ar@<-0.5ex>[d] & \sMod_\cR(\cC)_{proj}\ar@<0.5ex>[l]^{U}\ar@<-0.5ex>[d]\\
s\Psh(\cC)_{inj} \ar@<0.5ex>[r]^{\cR}\ar@<-0.5ex>[u]&\sMod_{\cR}(\cC)_{inj}.\ar@<0.5ex>[l]^{U} \ar@<-0.5ex>[u]}\]

Since $\sS$ and  $s\Mod_{\cRR}$ are combinatorial model categories \cite[A.2.7]{lurie2009higher}, both the \it projective \rm and \it injective \rm model structures over $s\mbox{Mod}_\cRR(\cC)$ are combinatorial model structures and then, by definition, cofibrantly generated.  For the projective model structure, a set of generators is given by

\begin{align*}
\cI&=\{\cRR[id_X]\otimes \cRR[\iota_n]:\cRR[h_X]\otimes\cRR[\partial\Delta^{n}]\rightarrow \cRR[h_X]\otimes \cRR[\Delta^n],\,\,\, for \,n\geq 0,\, X\in \cC\}\\
\cJ&=\{\cRR[id_X]\otimes \cRR[j_k^n]:\cRR[h_X]\otimes\cRR[\partial\Lambda^{n}_k]\rightarrow \cRR[h_X]\otimes \cRR[\Delta^n],\,\,\, for \,n\geq 1,\,\ 0\leq k\leq n\,\ X\in \cC\}
\end{align*} from this description immediately follows that every representable simplicial presheaf is cofibrant for the projective model structure. 

The \it injective model structure \rm is also cofibrantly generated, we would like to make some remarks about the generators since we are particularly interested in this model structure. The proof of Theorem \ref{proinj} follows applying  Theorem \ref{GenModel}. Since $\sMod_\cRR(\cM)$ is a Grothendieck abelian category by \cite{MR1780498} the class of monomorphisms is cofibrantly generated by the set of monomorphisms 

\[\cI=\{f:A\rightarrow B : \,\, B\,\, \mbox{is} \,\, \kappa\mbox{-bounded}\}.\]
Since the categories of simplicial presheaves and presheaves of simplicial modules are presentable categories, the set
\[\cJ=\{f:A\rightarrow B : \,\, B\,\, \mbox{is} \,\, \kappa\mbox{-bounded and } f\mbox{ is section-wise weak equivalence}\}
\] generates the trivial cofibrations. 

This fact is also observed in the axioms given by Jardine, for any monomorphism $f:X\rightarrow Y$ which is weak equivalence, and  any $A$ $\kappa$-bounded, there exists $B$ $\kappa$-bounded subobject $B\subset Y$ such that $B\cup X\rightarrow B$ is a weak equivalence. 
    
\[
\begin{tikzcd}[row sep=0.7em]
 &          &X\arrow[dd]\\
 & X\cap B \arrow[ru] &\\
A\arrow[rr]\arrow[dr] &           & Y\\
& B \arrow[ru]  \arrow[uu,<-,crossing over]& 
\end{tikzcd}
\]
                       
\subsection{Local model structures}

Let  $(\cC,\tau)$ be a small Grothendieck site. Following Jardine (see \cite{MR906403}) we can study local model structures on the category of simplicial presheaves $s\Psh(\cC,\tau)$. We denote $\pi_{n}(\cX)$ the $n$-homotopy presheaf of $\cX$. For a presheaf $F$ on $(\cC,\tau)$, we write $\widetilde{F}$ for the associated sheaf.

\begin{definition}
A map $\cX\rightarrow \cY$ of simplicial presheaves is called a local weak equivalence if 

\begin{enumerate}
\item the map $\widetilde{\pi_0}\rightarrow \widetilde{\pi_0}$ is an isomorphism of sheaves/

\item for any object $U$ of $\cC$, any $x\in\cX(U)$ and any $n>1$ the morphism of associated sheaves 
\[\widetilde{\pi_n}(\cX,x)\rightarrow \widetilde{\pi_n}(\cY, f(x))\]
on the overcategory $\cC/U$ defined by $f$ is an isomorphism.
\end{enumerate}
\end{definition}

\begin{definition}\label{stalkweak}
Let  $(\cC,\tau)$ be a small Grothendieck site and $f:\cX\rightarrow \cY$ a morphism of simplicial presheaves:
\begin{enumerate}
\item $f$ is called a simplicial weak equivalence if for any point $x$ of the site $(\cC,\tau)$ the morphism of simplicial sets $x^{*}(f):x^{*}(\cX)\rightarrow y^{*}(\cX)$ is a weak equivalence.
\item $f$ is called a cofibration if it is a monomorphism;
\item $f$ is called a fibration if it has the right lifting property with respect to any cofibration which is a weak equivalence.  
\end{enumerate}
Denote by $\mathbf{W}_s$ (resp. $\mathbf{C}$, $\mathbf{F}_s$) the class of simplicial weak equivalences (resp. cofibrations, simplicial fibrations). 
\end{definition}

\begin{rmk}
If $(\cC,\tau)$ is a small Gorthendieck site with enough points. Then the notion of simplicial weak equivalence and local weak equivalence coincides (see \cite[Section 2]{MR906403}).
\end{rmk}

\begin{theorem}\label{Jardinemodel}
Let  $(\cC,\tau)$ be a small Grothendieck site with  enough points. Then the triple $(\mathbf{W}_s, \mathbf{C}, \mathbf{F}_s)$ defines a model structure in the category   $s\Psh(\cC)$.
\end{theorem}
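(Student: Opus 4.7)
The plan is to follow Jardine's original strategy and verify directly that $(\W_s,\C,\F_s)$ satisfies the Quillen axioms, with fibrations defined as those maps having the right lifting property against all trivial cofibrations. The axioms MC1 (bicompleteness), MC2 (two out of three), and MC3 (closure under retracts) are essentially formal: all small limits and colimits in $s\Psh(\cC)$ exist and are computed sectionwise; since the site has enough points, two-out-of-three for $\W_s$ and closure of $\W_s$ under retracts follow by applying the corresponding statements in the Kan model structure stalkwise; and monomorphisms are clearly closed under retract. The real work lies in MC4 (lifting) and MC5 (factorization).

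I would exhibit the structure as cofibrantly generated. Choose an infinite regular cardinal $\kappa$ larger than $|\mathrm{Mor}(\cC)|$, and set
\begin{align*}
\mathcal{I} &= \{A \hookrightarrow B \mid B \text{ is } \kappa\text{-bounded}\}, \\
\mathcal{J} &= \{A \hookrightarrow B \in \mathcal{I} \cap \W_s\}.
\end{align*}
The standard Grothendieck-abelian argument shows every monomorphism of simplicial presheaves is a transfinite composition of pushouts of maps in $\mathcal{I}$, so that $\mathcal{I}$-cofibrations coincide exactly with $\C$. The small object argument applied to $\mathcal{I}$ and $\mathcal{J}$ then produces the two functorial factorizations required for MC5, provided one knows that maps with the right lifting property against $\mathcal{J}$ are precisely the fibrations of trivial cofibrations.

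The technical heart is the bounded cofibration lemma: if $X \hookrightarrow Y$ lies in $\C \cap \W_s$ and $A \subset Y$ is a $\kappa$-bounded subpresheaf, then there exists a $\kappa$-bounded subpresheaf $B \subset Y$ with $A \subset B$ such that $B \cap X \hookrightarrow B$ is again in $\C \cap \W_s$. I would prove it by an inductive enlargement process: at each stage absorb the (at most $\kappa$-many) witnesses needed to kill the relevant stalkwise homotopy classes, using that $\kappa > |\mathrm{Mor}(\cC)|$ together with countability of homotopy data for simplicial sets. Taking the union of the resulting chain gives $B$. Granted this lemma, any trivial cofibration is a retract of a transfinite composition of pushouts of maps in $\mathcal{J}$, so fibrations coincide with maps having the right lifting property against $\mathcal{J}$, and MC5 follows from Quillen's small object argument. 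Axiom MC4 is then the combination of the definitions with the usual retract trick: a trivial fibration factors as a cofibration followed by a trivial fibration in a way that exhibits it as a retract of the latter, yielding the remaining lifting property.

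The main obstacle is unquestionably the bounded cofibration lemma. Its proof requires balancing cardinality control against the stalkwise description of $\W_s$, and it is exactly here that the hypothesis of enough points enters in an essential way, since it reduces the weak equivalence condition to a local stalkwise one that can be tested on a controlled collection of points and hence absorbed in $\kappa$-many steps.
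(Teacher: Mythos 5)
The paper offers no argument of its own here; it simply cites Jardine's original theorem (\cite{MR906403}, Theorem 2.3), and your sketch is a faithful outline of exactly that proof: cofibrant generation by the $\kappa$-bounded monomorphisms, the bounded cofibration lemma as the technical crux, the Zorn/small-object machinery for factorization, and the retract trick for the remaining lifting axiom. Your proposal is correct and takes essentially the same route as the source the paper defers to.
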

\begin{proof}
This is proved in \cite[Theorem 2.3]{MR906403}.\end{proof}
This model structure is known as Jardine model structure.

In general it is hard to describe the fibrations or give an explicit fibrant replacement. In  \cite{MorelVoevodsky} they construct an explicit resolution fibrant replacement. Before describe this  fibrant replacement lets provided the following definition of local fibration.

\begin{definition}\label{localfib}
A morphism of simplicial presheaves $f:\cX\rightarrow \cY$ is called a \it local fibration \rm (\it resp. \rm trivial local fibration) if for any point $x$ of $\tau$ the corresponding morphism of simplicial sets $x^{*}(\cX)\rightarrow y^{*}(\cY)$ is a Kan fibration (\it resp. \rm a Kan fibration and a weak equivalence).  A morphism of simplicial presheaves $f:\cX\rightarrow \cY$ is called \it local cofibration \rm if it satisfies the left lifting property with respect to all trivial fibrations.

\end{definition}
\begin{rmk}\label{locfib}
Every simplicial fibrations is a local fibration, but not every local fibration is a simplicial fibration. Eilenberg-Mac Lane spaces fails to be simplicial fibrant. In \cite{MR906403} it is called the global and local theory respectively. 
\end{rmk}

\subsection{Hyperdescent and \v Cech descent}\label{hypchdes}

Note that class of section-wise weak equivalences $\mathbf{W}$ is contained in the class of simplicial weak equivalences $\mathbf{W}_s$ and the cofibrations are the injective cofibrations. As a consequence Jardine model structure is a left Bousfield localization of the injective model  structure.  The local objects are the simplicial presheaves satisfying $\tau$-hyperdescent \rm i.e. descent for arbitrary hypercovers. Recall that a hypercover of $X$ is a simplicial presheaf with an augmentation $U\rightarrow X$, such that each $U_n$ is a coproduct of representable, and $U\rightarrow X$ is a local acyclic fibration.   

\begin{definition}
An object-fibrant simplicial presheaf $F$ satisfies descent for a hypercover $U\rightarrow X$ if the natural map from $F(X)$ to the homotopy limit of the diagram
\[\xymatrix{\prod_{a}F(U_0^a)\ar@<0.5ex>[r]\ar@<-0.5ex>[r]& \prod_{a}F(U_1^a) \ar@<1ex>[r]\ar[r]\ar@<-1ex>[r]&\cdots}\] is a weak equivalence, where the product runs over all the representable summands of each $U_n$. If $F$ is not objectwise-fibrant we say the $F$ satisfies   descent if some object-wise fibrant replacement satisfies descent.
\end{definition}

\begin{theorem}
Let $(\cC,\tau)$ be an small Grothendieck site and $S$ the collection of hypercovers. Then the Bousfield localization at $S$ exists and coincides with the Jardine's model structure. We denote this category as $s\Psh(\cC)_{\tau, inj}$.
\end{theorem}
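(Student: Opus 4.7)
The plan is to verify the hypotheses of the standard existence theorem for left Bousfield localizations and then identify the resulting model structure with Jardine's by comparing cofibrations and fibrant objects.

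First, I would recall that the injective model structure $s\Psh(\cC)_{inj}$ is combinatorial (every injective cofibration is generated by a set of monomorphisms between $\kappa$-bounded objects, as noted in the discussion of generators above) and left proper (every object is cofibrant, since cofibrations are the monomorphisms). By Smith's theorem (see Hirschhorn, or \cite{lurie2009higher}, A.3.7.3), a left Bousfield localization of a combinatorial left proper model category at any \emph{set} of maps exists. The set-theoretic subtlety is that the class of all hypercovers of all objects of $\cC$ is not a priori a set. This is resolved by the observation of Dugger--Hollander--Isaksen: for a suitably large regular cardinal $\kappa$, the maps $\mathrm{hocolim}\, U_\bullet \to X$ arising from hypercovers of $\kappa$-bounded size form a set $S_\kappa$, and localizing at $S_\kappa$ is equivalent to localizing at the proper class of all hypercovers. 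The Bousfield localization $L_{S_\kappa} s\Psh(\cC)_{inj}$ therefore exists.

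Next I would identify this localization with Jardine's model structure $(\mathbf{W}_s,\mathbf{C},\mathbf{F}_s)$. Both model structures have the same class of cofibrations (monomorphisms, which is the class $\mathbf{C}$ of Definition \ref{stalkweak}). A standard lemma on model structures states that if two model structures on the same underlying category share the same cofibrations and the same fibrant objects, then they coincide (their weak equivalences agree because a map is a weak equivalence iff some/any fibrant replacement is a weak equivalence between fibrant objects, and weak equivalences between fibrant objects are detected by lifting against the common cofibrations). It therefore suffices to identify the fibrant objects of $L_{S_\kappa} s\Psh(\cC)_{inj}$ with the Jardine-fibrant objects.

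By construction, an object $F$ is fibrant in $L_{S_\kappa} s\Psh(\cC)_{inj}$ iff it is injective-fibrant and $S_\kappa$-local, which by the reduction above is equivalent to satisfying descent for \emph{all} hypercovers $U_\bullet \to X$, i.e.\ $F(X) \to \mathrm{holim}_\Delta F(U_\bullet)$ is a weak equivalence for every hypercover. The theorem of Dugger--Hollander--Isaksen (and already essentially Jardine) asserts that this hyperdescent condition exactly characterizes Jardine-fibrant simplicial presheaves among injective-fibrant ones. This gives the equality of fibrant objects and completes the identification.

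The main obstacle is the set-theoretic reduction in the first step: one has to produce a cardinal $\kappa$ large enough that hypercovers of bounded size already detect hyperdescent, and verify that $S_\kappa$-local objects are automatically local with respect to arbitrary hypercovers. Once this is in place, the rest is a clean comparison of model-category data using that both structures have the same cofibrations.
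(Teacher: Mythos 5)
Your sketch is correct and follows the same route as the paper, which for this statement simply cites \cite{MR2034012} (Dugger--Hollander--Isaksen, Corollary 7.1); what you have written is essentially a faithful reconstruction of their argument (bounding the hypercovers to a set $S_\kappa$, invoking Smith's existence theorem for the localization of the left proper combinatorial injective structure, and identifying fibrant objects via hyperdescent). The one step worth tightening is the ``standard lemma'' that a model structure is determined by its cofibrations and fibrant objects -- this is a result of Joyal, and in the present situation it is cleanest to note instead that both structures are left Bousfield localizations of the injective model structure with the same local objects, hence have the same weak equivalences.
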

\begin{proof}
This is proved in \cite[Corollary 7.1]{MR2034012}
\end{proof}

\begin{definition} Let $(\cC,\tau)$ be a small Grothendieck site and  $S_\tau$ the set of covering sieves for $\tau$, which is given by the set of monomorphisms $R\hookrightarrow X$ with $X$ the representable presheaves.  We say that a simplicial presheaf on $\cC$ satisfies $\tau$-\v Cech descent  or is $\tau$-\v Cech local or $\tau$- \v Cech fibrant if it is $\cS_\tau$-local  and we say that a morphism of simplicial presheaves is a $\tau$-local weak equivalence if it is an $\cS_\tau$-equivalence.   
\end{definition}

\begin{rmk}
Since $(\cC,\tau)$ is an small Grothendieck site,  $ S_\tau$ is actually a set. And since $s\Psh(\cC)_{inj}$ is a combinatorial model structure, the Bousfield localization with respect to $ S_\tau$ exists. We denote this model structure as $s\Psh(\cC)_{\check{\cC},\tau, inj}$
\end{rmk}

If $\mathcal{U}=\{U_i\rightarrow X\}_{i\in I}$ is a family of maps in $\cC$, we denote by $\check{C}(\mathcal{U})$ its \v Cech nerve, which is a simplicial presheaf on $\cC$ with an augmentation to $h_X$. The following result gives a characterization of $\tau$-local objects in terms of \v Cech descent. 

\begin{lemma}
Let $(\cC,\tau)$ be a small Grothendieck site. For every $X\in \cC$, let $Cov(X)$ be a set whose elements are families of maps $\{U_i\rightarrow X\}_{i\in I}$ which are coverings for $X$ in the Grothendieck topology $\tau$.  A simplicial presheaf $F$ on $\cC$ is $\tau$-local if and only if it is $\cS$-local, \it{i.e.} if and only if for every $X\in \cC$ and every $\{ U_i\rightarrow X\}_{i\in I}\in Cov(X)$, the canonical map 

\[ F(X)\rightarrow \mbox{holim}_{n\in\Delta}\prod_{i_0,\dots, i_n\in I}F(U_{i_0\times_X\cdots\times_XU_{i_n}})\] is a weak equivalence.   
\end{lemma}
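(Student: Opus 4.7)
The strategy is to reformulate both sides in terms of derived mapping spaces and to identify the covering sieve $R_{\mathcal{U}} \hookrightarrow h_X$ generated by a covering family $\mathcal{U}=\{U_i\to X\}_{i\in I}$ with its \v Cech nerve $\check{C}(\mathcal{U})$ up to local weak equivalence. I would work throughout in the injective model structure on $s\Psh(\cC)$ and, after replacing $F$ by an objectwise fibrant model if necessary, assume $F$ is injectively fibrant. By the universal property of the Bousfield localization at $\cS_\tau$, the sheaf $F$ is $\cS_\tau$-local exactly when for every covering sieve $R_{\mathcal{U}}\hookrightarrow h_X$ the restriction map $\mathrm{Map}(h_X,F)\to\mathrm{Map}(R_{\mathcal{U}},F)$ is a weak equivalence of simplicial sets. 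By the enriched Yoneda lemma $\mathrm{Map}(h_X,F)\simeq F(X)$, so the task reduces to identifying $\mathrm{Map}(R_{\mathcal{U}},F)$ with the homotopy limit appearing in the statement.

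The next step is to exhibit $\check{C}(\mathcal{U})$ as a cofibrant resolution of $R_{\mathcal{U}}$ in the injective model structure. At the level of set-valued presheaves, $R_{\mathcal{U}}$ is the image of $\coprod_i h_{U_i}\to h_X$, which is the coequalizer of the two face maps $\coprod_{i,j} h_{U_i\times_X U_j}\rightrightarrows\coprod_i h_{U_i}$. Upgrading this to a simplicial colimit, the augmentation $\check{C}(\mathcal{U})\to R_{\mathcal{U}}$ is a local weak equivalence: on each stalk of $\tau$ it becomes the \v Cech nerve of a surjection of sets, hence weakly contractible over its quotient. Since each $\check{C}(\mathcal{U})_n=\coprod h_{U_{i_0}\times_X\cdots\times_X U_{i_n}}$ is a coproduct of representables, the simplicial diagram $\check{C}(\mathcal{U})$ is Reedy cofibrant in injectively cofibrant presheaves, and the comparison $\mathrm{Map}(R_{\mathcal{U}},F)\to\mathrm{Map}(\check{C}(\mathcal{U}),F)$ is a weak equivalence.

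Finally, the derived mapping space out of a simplicial diagram of representables is computed as a homotopy limit of the levelwise mapping spaces. Applying $\mathrm{Map}(-,F)$ termwise to $\check{C}(\mathcal{U})$ and invoking Yoneda gives
\[
\mathrm{Map}(\check{C}(\mathcal{U}),F)\;\simeq\;\holim_{n\in\Delta}\,\prod_{i_0,\ldots,i_n\in I}F\bigl(U_{i_0}\times_X\cdots\times_X U_{i_n}\bigr),
\]
which, combined with the previous paragraph and $\mathrm{Map}(h_X,F)\simeq F(X)$, produces exactly the descent square required; since this holds over every $X$ and every element of $Cov(X)$, the two local conditions coincide. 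The main obstacle is the verification that $\check{C}(\mathcal{U})\to R_{\mathcal{U}}$ is a local weak equivalence serving as a cofibrant resolution: this rests on a stalkwise calculation (using that $(\cC,\tau)$ has enough points, as assumed for Jardine's theorem above) together with the standard fact that \v Cech nerves of surjections of sets are contractible. Everything else is formal manipulation of mapping spaces and cosimplicial homotopy limits.
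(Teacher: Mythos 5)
The paper does not actually prove this lemma; it simply cites \cite[Lemma 3.1.3]{asok2017}, so your argument has to stand on its own. Its overall architecture (replace the covering sieve $R_{\mathcal{U}}\hookrightarrow h_X$ by the \v Cech nerve, then compute $\mathrm{Map}(\check{C}(\mathcal{U}),F)$ as a cosimplicial homotopy limit via Yoneda and Reedy cofibrancy of $\check{C}(\mathcal{U})$) is the standard and correct one. However, the step you yourself single out as the main obstacle is justified in a way that does not suffice. You show that $\check{C}(\mathcal{U})\to R_{\mathcal{U}}$ is a \emph{local} weak equivalence by a stalkwise computation and then conclude that $\mathrm{Map}(R_{\mathcal{U}},F)\to\mathrm{Map}(\check{C}(\mathcal{U}),F)$ is an equivalence. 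But $F$ is only assumed injectively fibrant (and the property being characterized is \v Cech locality, not hyperdescent), and such an $F$ does \emph{not} invert local weak equivalences in general --- that is precisely the gap between \v Cech descent and hyperdescent that the surrounding text emphasizes via \cite[Example A.9]{MR2034012}. As written, the argument is circular: it needs $F$ to be Jardine-fibrant exactly where it is trying to prove a weaker locality statement. A secondary point: the lemma as stated requires no points on the site, so an essentially pointless-site-valid argument is preferable anyway.

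The repair is immediate and you should make it explicit: the augmentation $\check{C}(\mathcal{U})\to R_{\mathcal{U}}$ is in fact a \emph{sectionwise} weak equivalence. For each $V\in\cC$, the simplicial set $\check{C}(\mathcal{U})(V)$ is the \v Cech nerve of the map of sets $\coprod_i\Hom(V,U_i)\to\Hom(V,X)$, which decomposes as a disjoint union, over the elements $\phi$ of the image $R_{\mathcal{U}}(V)$, of nerves of codiscrete groupoids on the nonempty sets of factorizations of $\phi$ through some $U_i$; each such nerve is contractible. Since every object is cofibrant in the injective structure and $F$ is injectively fibrant, $\mathrm{Map}(-,F)$ inverts sectionwise weak equivalences, and the rest of your computation goes through without any appeal to points. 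One further small issue worth a sentence in a complete write-up: $\cS_\tau$ as defined in the paper consists of \emph{all} covering sieves, not only those generated by families in $Cov(X)$, so the ``if'' direction also needs the standard saturation argument reducing an arbitrary covering sieve to one generated by a chosen covering family (and implicitly that $Cov$ generates the topology $\tau$); your appeal to the universal property of the localization silently identifies the two classes of sieves.
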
  
\begin{proof}
\cite[Lemma 3.1.3]{asok2017}
\end{proof}

In general the Jardine model structure $s\Psh(\cC)_{\tau,inj}$ is a left Bousfield localization of the \v Cech $\tau$-local model structure $s\Psh(\cC)_{\check{\cC},\tau, inj}$. Since we are inverting arbitrary hypercovers and not just \v Cech hypercovers, and the two model structures may be different, as it is exhibited in \cite[Example A.9]{MR2034012}. However, if $S$ is a Noetherian scheme of finite type and finite Krull dimension and $Sm_S$ is provided with the Nisnevich topology both localizations coincide \cite[Example A.10]{MR2034012}. 


\subsection{Nisnevich and \'etale-descent}

From now we will be particularly interested in the standard topologies for algebraic geometry, the  Zariski, Nisnevich and the \'etale topology.

\begin{definition}
Let $\cC$ be a small category with initial object $0$.
A \it{cd-structure} \rm$P$  on $\cC$ is a collection of commutative squares in $\cC$. We say that $P$ forms a cd-structure on $\cC$ if whenever $\mathcal{Q}\in P$ and $\mathcal{Q}'$ is isomorphic to $\mathcal{Q}$, then $\mathcal{Q}'$ is also in $P$. The squares of the collection $P$ are called distinguished squares.
\end{definition}

\begin{definition}
The \it cd-topology,  \rm  $\tau_p$, associated to a \it{cd-structure }\rm$P$ is the Grothendieck topology on $\cC$ generated by the coverings sieves of the following form:
\begin{enumerate}
\item The empty sieve is a covering sieve of the initial object $0$,
\item The sieve generated by morphisms of the form $\{A\rightarrow X, Y\rightarrow X\}$ where $A\rightarrow X$ and $Y\rightarrow X$ are two sides of the squares in $P$ of the form 
\[\xymatrix{B\ar[d]\ar[r]&Y\ar[d]_p\\
                  A\ar[r]_e&X}\]
\end{enumerate} 
\end{definition}

\begin{example}
\leavevmode
\begin{enumerate}
\item The \it Zariski  $cd$-structure \rm on $Sm_S$ consisting of Cartesian squares
\[\xymatrix{U\times_XV\ar[d]\ar[r]&V\ar[d]_p\\
                  U\ar[r]_e&X}\] where $U, V\subset X$ are open subschemes such that $U\cap V=X$.
\item The \it{Nisnevich} cd-structure \rm on $Sm_S$ consists of Cartesian squares 
\[\xymatrix{U\times_XV\ar[d]\ar[r]&V\ar[d]^\pi\\
                  U\ar[r]^j&X}\]  where $j$ is an open immersion and $\pi$ is \'etale and furthermore $\pi$ induces an isomorphism $V\times_XZ\cong Z$, where $Z$ is the reduced closed complement of $j$.
\end{enumerate}
\end{example}

\begin{rmk}
Since the schemes on $Sm_S$ are quasi-compact and quasi-separated, the topology generated by the Zariski \it cd-structure \rm is the usual Zariski topology. The proof in  \cite[Proposition 1.4]{MorelVoevodsky} shows that the topology generated by the \it Nisnevich cd-structure \rm is the Nisnevich topology.
This fact is not true for the \it  \'etale topology \rm but in \cite{ISAKSEN200437} they discuss the behavior of the \'etale topology in terms of distinguished squares. 
\end{rmk}

\subsection{Excision and descent}

\begin{definition}
Let $\cC$ be a small category with an initial object $0$ and let $P$ be a \it cd-structure \rm on $\cC$. A simplicial presheaves $F$ on $\cC$ satisfies $P$-excision if 
\begin{enumerate}
\item F(0) is weakly contractible;
\item for every square $Q$ in $P$, F(Q) is homotopy Cartesian.
\end{enumerate} 
\end{definition}

\begin{proposition}
Let $\cC$ be a small category with a strictly initial object let $P$ be a \it cd-structure \rm on $\cC$ such that
\begin{enumerate}
\item Every square in $P$ is Cartesian;
\item pullbacks of squares in $P$ exists and belongs to $P$;
\item for every square in $P$ $e:A\rightarrow X$ is a monomorphism;
\item for every square in $P$, the square 
\[\xymatrix{W\ar[d]\ar[r]&V\ar[d]_p\\
                  W\times_UW\ar[r]_e&V\times_XV}\] is also in $P$.

\end{enumerate} Then a simplicial presheaves $F$ on $\cC$ satisfies $P$-excision if and only if satisfies $\tau$-\v Cech descent. 
\end{proposition}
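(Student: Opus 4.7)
My plan is to prove the two implications separately. The direction ``$\tau$-\v Cech descent $\Rightarrow$ $P$-excision'' will be essentially formal once the covering sieves of $\tau_P$ are unwound, while ``$P$-excision $\Rightarrow$ $\tau$-\v Cech descent'' requires identifying the homotopy limit of a \v Cech nerve with a $2$-dimensional homotopy pullback, and this is where conditions (1)--(4) become essential.

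For the first direction I would first use that, by the definition of $\tau_P$, the empty sieve is a cover of the strict initial object $0$, so that \v Cech descent immediately yields $F(0)\simeq\holim_{n\in\Delta}(\ast)=\ast$. Next, given a distinguished square $Q$ with sides $e:A\to X$, $p:V\to X$ and corner $W=A\times_X V$, the family $\{A\to X,\,V\to X\}$ generates a covering sieve, and \v Cech descent provides a weak equivalence
\[ F(X)\xrightarrow{\sim}\holim_{n\in\Delta}\prod_{i_0,\ldots,i_n\in\{A,V\}}F(U_{i_0}\times_X\cdots\times_X U_{i_n}). \]
Invoking condition (3), the iterated self-products of $A$ collapse to $A$, so the mixed pullbacks take the form $W\times_A W\times_A\cdots\times_A W$, and I would identify the right-hand side with the homotopy pullback $F(A)\times^h_{F(W)}F(V)$, which is precisely the assertion that $F(Q)$ is homotopy Cartesian.

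For the reverse direction I would check locality against the two generating families of sieves of $\tau_P$. The empty sieve on $0$ is handled directly by contractibility of $F(0)$. For a sieve coming from a distinguished square $Q$, I would run the cosimplicial analysis above in the other direction: the homotopy Cartesian property of $F(Q)$, together with the collapse of the \v Cech nerve induced by condition (3), yields that $F(X)$ is weakly equivalent to $\holim_n F(\check C(\{A,V\}))$. Locality with respect to an arbitrary covering sieve then follows formally, since $\tau_P$-\v Cech descent is precisely $\cS_{\tau_P}$-locality by the lemma preceding the proposition.

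The main obstacle is the cosimplicial identification used in both paragraphs above, namely that $\holim_n F(\check C(\{A,V\}))$ coincides with $F(A)\times^h_{F(W)}F(V)$. Condition (4) is what makes this work, because it guarantees that the iterated pullbacks $W\times_A W\times_A\cdots\times_A W$ appearing in higher simplicial degrees themselves fit into distinguished squares over $V\times_X\cdots\times_X V$, which is exactly what is needed to argue inductively, via excision on these subsquares, that the higher \v Cech terms do not contribute further to the homotopy limit. Without condition (4) one cannot control the higher \v Cech terms through excision, and without condition (3) the $A$-side degeneracies would not collapse at all.
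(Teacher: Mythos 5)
The paper states this proposition without any proof (it is Theorem~3.2.5 of \cite{asok2017}, the reference cited in the lemma immediately preceding it), so there is no in-text argument to compare yours against; I can only assess your proposal on its own terms. Your overall architecture is the standard one: reduce to the generating covers via the preceding lemma, dispose of the empty sieve on the initial object by contractibility of $F(0)$, and for a distinguished square compare the \v Cech homotopy limit with the homotopy pullback $F(A)\times^{h}_{F(W)}F(V)$, using condition (3) to identify the mixed \v Cech terms with $W\times_A\cdots\times_A W$ and condition (4) to control the rest. That skeleton is correct, and your computation of the mixed terms is right.

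The gap is in the central identification $\holim_{n}F(\check{C}(\{A,V\}))\simeq F(A)\times^{h}_{F(W)}F(V)$, which you defer to the last paragraph and justify by ``excision on these subsquares.'' There are two problems. First, this identification is not a statement about the \v Cech nerve alone: the homotopy colimit of $\check{C}(\{A,V\})$ is objectwise the sieve $R_Q=\mathrm{im}(h_A)\cup\mathrm{im}(h_V)\subset h_X$, and this differs from the homotopy pushout $h_A\sqcup^{h}_{h_W}h_V$ already on $\pi_0$ as soon as two sections of $V$ have the same image in $X$ outside of $A$ --- which happens for genuine Nisnevich squares. So the identification must invoke the hypothesis on $F$, and the pure-$V$ terms $F(V\times_X\cdots\times_X V)$, which your second paragraph does not mention, are exactly where the difficulty sits. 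Second, invoking excision on the condition-(4) square $Q'$ is circular in the direction ``\v Cech descent $\Rightarrow$ $P$-excision'': excision for $Q'$ is an instance of the conclusion you are trying to establish, and iterating condition (4) produces squares $Q'',Q''',\dots$ with no base case, so the induction you gesture at does not terminate; using \v Cech descent on the cover attached to $Q'$ instead only returns another full \v Cech limit, not the two-fold pullback. The standard repair is to decompose the sieve as $R_Q=h_A\cup\mathrm{im}(h_V\to h_X)$, a homotopy pushout over $\mathrm{im}(h_W\to h_A)$ since all of these are subobjects of $h_X$, and then to prove directly --- this is where conditions (2) and (4) are actually consumed --- that $\mathrm{Map}(\mathrm{im}(h_V\to h_X),F)\simeq F(V)$ and $\mathrm{Map}(\mathrm{im}(h_W\to h_A),F)\simeq F(W)$ under either hypothesis on $F$. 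As written, your sketch does not supply this step, and it is the entire content of the theorem.
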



\subsection{Derived categories of presheaves of $\cRR$-modules}         
\begin{definition}
A morphism $F\rightarrow G$ in $s\Mod_{\cRR}(\cC)$ is called $\tau$-local weak equivalence if for any point of the site $(\cC,\tau)$ the morphism of simplicial sets $x^{*}(f):x^{*}(F)\rightarrow x^{*}(G)$ is a weak equivalence in $s\Mod_{\cRR}$.
\end{definition}

\begin{theorem}\label{mono}
Let $\cRR$ be a commutative ring and $(\cC,\tau)$ a small Groethendieck site with enough points. There is a simplicial, cofibrantly generated model category structure on $s\Mod_{\cRR}(\cC)$ where the cofibrations are the monomorphisms and the weak equivalences are object-wise weak equivalences (resp. $\tau$-local weak equivalence) of the  underlying simplicial sets. 
\end{theorem}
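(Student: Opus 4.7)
The plan is to mirror Jardine's construction of the injective model structure on $s\Psh(\cC)$ recalled in Theorem \ref{Jardinemodel}, but now in the abelian setting of $s\Mod_{\cRR}(\cC)$. Two model structures must be produced in parallel: the global one, whose weak equivalences are objectwise, and the $\tau$-local one, whose weak equivalences are detected on stalks. Both are injective in flavour, so in each case cofibrations are all monomorphisms; what changes between them is only the class of weak equivalences (and correspondingly the class of fibrations, defined by right lifting against trivial cofibrations).

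First I would record that $s\Mod_{\cRR}(\cC)$ is locally presentable and bicomplete: the underlying category $\Mod_{\cRR}(\cC)$ is Grothendieck abelian, and taking simplicial objects preserves local presentability. Next, for each model structure, I would apply a recognition theorem for cofibrantly generated model categories (Hirschhorn 11.3.1, or Beke's sheafifiable homotopy model categories). Following the discussion after Theorem \ref{Jardinemodel} in the excerpt, I pick a sufficiently large regular cardinal $\kappa$ (exceeding $\#(\cC)$, $\#(\cRR)$, $\aleph_0$) and define
\[
\cI=\{f:A\hookrightarrow B\mid B\text{ is }\kappa\text{-bounded}\},
\qquad
\cJ=\{f\in\cI\mid f\text{ is a weak equivalence (objectwise, resp.\ }\tau\text{-local)}\}.
\]
The small object argument applies since every object of a Grothendieck abelian category is small with respect to monomorphisms. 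Verifying the retract, 2-out-of-3, and lifting axioms is then mechanical; the only nontrivial axiom is that every $\cJ$-cell is a weak equivalence, which in the local case follows by checking stalkwise and invoking the fact that filtered colimits of Kan-trivial cofibrations in $\sMod_{\cRR}$ are again weak equivalences.

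The simplicial enrichment is inherited from the standard simplicial structure on $\sMod_{\cRR}$: for $F\in s\Mod_{\cRR}(\cC)$ and $K\in\sS$ set $F\otimes K:=F\otimes_{\cRR}\cRR[K]$ and $\map(K,F)$ by the usual end formula. Verifying axiom SM7 reduces, by adjunction and the standard pushout-product argument, to the analogous fact in $\sMod_{\cRR}$, and holds in both model structures since $\cI$-pushout products with generating cofibrations of $\sS$ remain monomorphisms.

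The main obstacle, and precisely the reason the hypothesis of enough points is assumed, is the \emph{bounded cofibration property} for trivial cofibrations in the local case: given a $\tau$-local trivial cofibration $X\hookrightarrow Y$ and a $\kappa$-bounded subobject $A\subset Y$, one must find a $\kappa$-bounded subobject $B\subset Y$ with $A\subset B$ such that $B\cap X\hookrightarrow B$ remains a $\tau$-local weak equivalence. This is the delicate step in Jardine's proof of Theorem \ref{Jardinemodel}, and I would replay it in the abelian setting: the enough-points hypothesis lets one test local weak equivalences stalkwise, so one inductively enlarges $A$ by adjoining, over each of a bounded collection of relevant stalks, witnesses for the simplicial weak equivalence of $A\cap X\hookrightarrow A$, iterating $\omega$ many times and taking the union. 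The abelian setting actually simplifies the bookkeeping, since one manipulates submodules rather than subpresheaves of sets. Once this bounded cofibration property is established, the cofibrant generation of $\cJ$ is confirmed and the recognition theorem produces both claimed model structures.
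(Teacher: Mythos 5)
Your proposal is correct, but it takes a genuinely different route from the paper. You follow the classical Jardine-style construction: explicit generating sets $\cI$ (monomorphisms with $\kappa$-bounded target) and $\cJ$ ($\kappa$-bounded trivial cofibrations), the Kan--Hirschhorn recognition theorem, and, as the key technical input, the bounded cofibration property for trivial cofibrations, proved by a stalkwise witness-adjoining argument using the enough-points hypothesis. The paper instead invokes Jeff Smith's theorem in the form of Lurie's Proposition A.2.6.8 (its Theorem \ref{GenModel}): it takes $C$ to be the monomorphisms (generated by a set since $\Mod_{\cRR}(\cC)$ is Grothendieck abelian), verifies that $C\cap W$ is weakly saturated by checking pushouts and transfinite compositions stalkwise, obtains accessibility of $W$ for free as the preimage of the accessible class of weak equivalences in $s\Psh(\cC)$ under the accessible forgetful functor, and checks that maps with the right lifting property against all monomorphisms are sectionwise trivial fibrations (hence weak equivalences) by comparing with the projective generating cofibrations. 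The trade-off is clear: Smith's theorem lets the paper bypass the bounded cofibration property entirely --- which is the one step in your argument that is only sketched and would require a page of careful cardinality bookkeeping --- at the price of not producing an explicit set of generating trivial cofibrations; your approach demands that combinatorial work but yields an explicit $\cJ$, and hence a concrete recognition criterion for fibrations. Both arguments use the enough-points hypothesis in the same way, namely to test $\tau$-local equivalences on stalks, and both establish the simplicial structure by the same pushout-product reduction to $s\Mod_{\cRR}$.
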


 \begin{proof}
 The proof follows as \cite[Theorem 5.7]{MR3073905}. Let us denote $W_{\cRR}$ (resp. $W_{\tau,\cRR}$) the class of object-wise weak equivalences (resp. $\tau$-local weak equivalence) in $s\Mod_{\cRR}(\cC)$. The forgetful functor $U:s\Mod_{\cRR}(\cC)\rightarrow s\Psh(\cC)$ is a right adjoint hence preserves filtered colimits, as a consequence it is an accessible functor. Then the condition \it (3) \rm from the theorem \ref{GenModel} is satisfied. Condition \it (4) \rm is trivial. 
 
 Since $s\Mod_{\cRR}(\cC)$ is a Grothendieck abelian category by \cite{MR1780498} the class of monomorphisms is cofibrantly generated by a set of monomorphisms. Thus condition \it(1) \rm is satisfied. It remains to show conditions \it (2) \rm and \it (5). \rm 
 
 For \it (2) \rm  the class of monomorphisms which are object-wise weak equivalences (resp. $ \cW_{\tau,\cRR}$) are closed under  transfinite composition and push-outs.  The closure under transfinite composition follows because it is true in $s\Psh(\cC)$, and it transfinite composition can be computed in $s\Psh(\cC)$. 
 
 So is enough to show that for every push-out square:

\[
\xymatrix{A\ar[r]\ar[d]^-j&X\ar[d]^-f\\
                   B\ar[r]&Y}
\]                 
                   
 where $j\in (mono)\cap \cW_{\cRR}$ (resp. $(mono)\cap \cW_{\tau,\cRR}$) then   $f\in \cW_{\cRR}$ (resp. $\cW_{\tau,\cRR}$) . 

The case  $j\in (mono)\cap \cW_{\cRR}$ is obvious because  local weak equivalences in $\sS$ which are monomorphisms are closed under push-outs. 

\it For the case  $j\in(mono)\cap \cW_{\tau,\cRR}$\rm : Since push-outs are preserved under colimits we have the following push-out diagram

\begin{equation}\label{push-out}
\xymatrix{x^{*}(A)\ar[r]\ar[d]^-j&x^{*}(F)\ar[d]^-f\\
                   x^{*}(B)\ar[r]&x^{*}(G)}
\end{equation}
$x^{*}(j):x^{*}(A)\rightarrow x^{*}(B)$  is a monomorphism and also a weak equivalence in $\sS$. Since local weak equivalences in $\sS$ which are monomorphisms are closed under push-outs.

Condition \it (5) \rm is verified because we can consider 
\[\cRR[\cI^{proj}]:=\{\cRR [h_U]\otimes \cRR[\partial\Delta_{n}]\longrightarrow\cRR [h_U]\otimes \cRR[\Delta_{n}] \,\ :\,\, U\in \cC, \, \, n\in\mathbb{N}\} \] the set of generating cofibrations on $s\Mod_\cRR(\cC)_{proj}$ which are also monomorphisms then $mono-inj\subset (\cRR_{tr}[\cI^{proj}])-inj$  but a morphism in the former set is also a section-wise weak equivalence and thus a simplicial weak equivalence $W_s$. We have therefore defined a cofibrantly generated model category.

\end{proof}

\begin{rmk}
The class of object-wise weak equivalences $\cW_{\cRR}$ and the class of monomorphisms  actually define the injective model structure in the category $Fun(\cC, s\Mod_\cRR)$. Furthermore, the model structure defined by the  class $\cW_{\tau,\cRR}$ and the class of monomorphisms is the Bousfield localization of the injective model structure with respect to the class 
\[\{\cRR[h_\cU]\rightarrow \cRR[h_X] \,\,:\,\, \cU\rightarrow X \mbox{ is an hypercover of X and } X\in \cC\}\] This follows because a morphism $A\rightarrow B$ is an element in $\cW_{\cRR}$ if and only of $x^{*}A\rightarrow x^{*}B$ is a weak equivalence of simplicial sets. Then by the Whitehead Theorem  the morphism on stalks  is a quasi-isomorphism in $s\Mod_\cRR$ for all points $x$. Then $A\rightarrow B$ induces an isomorphism in $a_\tau H_{*}(A)\rightarrow a_\tau H_{*}(B)$. 
 \end{rmk}
\subsection{$\mathbb{A}^1$-localization}
We now introduce the $\mathbb{A}^1$-localization. We will consider $Sm_F$ the category of smooth schemes of finite type over $F$ with a Grothendieck topology $\tau$, which is either the Nisnevich topology or the \'etale topology.
\begin{definition}
A presheaf of simplicial $\cRR$-modules $M$ is $\mathbb{A}^1$-local if for any $X\in Sm_F$ the induced map 
\[\mapmod(\cRR[h_X],M)\rightarrow \mapmod(\cRR[h_{X\times \mathbb{A}^1}],M)\] is a weak equivalence. A morphism $M\rightarrow N\in s\Mod_{\cRR}(Sm_F)$ is a $\mathbb{A}^1$-weak equivalence if for any $P\in s\Mod_{\cRR}(Sm_F)$ $\mathbb{A}^1$-local, the induced map  $\mapmod(N,P)\rightarrow \mapmod(M,P) $ is a weak equivalence.
Since $s\Mod_\cRR(Sm_F)$ is endowed with the local cofibrantly generated model category $s\Mod_{\cRR}(Sm_F)_{inj,\tau}$, then the Bousfield localization exists with respect to the set 
$$\{ \cRR[h_{X\times\mathbb{A}^1}]\rightarrow \cRR[h_X]\,\, : \,\, X\in Sm_F\}$$ we denote this model structure as $s\Mod_\cRR(Sm_F)^{\mathbb{A}^1}_{inj,\tau}$.
\end{definition}

Note that the model category $s\Mod_\cRR(Sm_F)^{\mathbb{A}^1}_{inj,\tau}$ has as cofibrations the monomorphisms since cofibrations do not change under Bousfield localization.

\begin{proposition}\label{free-forget}
If we endow the category $s\Psh(Sm_F)$ with the injective (resp. $\tau$-local and $\tau$-$\mathbb{A}^1$-inj) and $s\Mod_{\cRR}(Sm_F)$ with the injective (resp. $\tau$-inj and $\tau$-$\mathbb{A}^1$-inj) \ref{msCoalgebras}.
Then the adjunction (free-forgetful adjunction)
\[\cRR[-]:s\Psh(Sm_F)\rightleftarrows s\Mod_{\cRR}(Sm_F):u\] is a Quillen adjunction.
\end{proposition}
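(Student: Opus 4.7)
The plan is to verify the Quillen adjunction for all three model structures at once by checking the two standard conditions: the left adjoint $\cRR[-]$ preserves cofibrations and preserves acyclic cofibrations. In all three cases on both sides the cofibrations are exactly the monomorphisms, so the first condition reduces to observing that for any monomorphism $A\hookrightarrow B$ of simplicial presheaves of sets, the induced map $\cRR[A]\hookrightarrow \cRR[B]$ is sectionwise (and degreewise) injective on free $\cRR$-modules, hence a monomorphism in $s\Mod_{\cRR}(Sm_F)$. This is immediate and does not depend on which topology or $\mathbb{A}^1$-localization we consider.

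For the preservation of weak equivalences in the injective and $\tau$-local injective model structures, I would argue on stalks. Since $\cRR[-]$ is a left adjoint, it commutes with filtered colimits and in particular with the stalk functors $x^{\ast}$ associated to points of the site. Thus for any map $f\colon X\to Y$ in $s\Psh(Sm_F)$, one has $x^{\ast}(\cRR[f]) \cong \cRR[x^{\ast}(f)]$. It then suffices to recall the classical fact that the free $\cRR$-module functor $\cRR[-]\colon \sS \to s\Mod_{\cRR}$ is left Quillen from the Kan model structure to the (transferred) model structure on simplicial $\cRR$-modules; in particular, it sends Kan weak equivalences to weak equivalences of simplicial modules (via Dold--Kan this is just the statement that a weak equivalence of simplicial sets induces an isomorphism on $\cRR$-homology). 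Combining this with the stalkwise characterisation of weak equivalences on both sides (Definition \ref{stalkweak} and the analogue for modules) gives that $\cRR[-]$ sends (acyclic) cofibrations to (acyclic) cofibrations in the injective and $\tau$-local injective structures.

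For the $\mathbb{A}^1$-local structures I would invoke the universal property of left Bousfield localization. Since both $s\Psh(Sm_F)^{\mathbb{A}^1}_{inj,\tau}$ and $s\Mod_{\cRR}(Sm_F)^{\mathbb{A}^1}_{inj,\tau}$ are obtained as left Bousfield localizations of the $\tau$-local injective model structures (along the classes of projections $h_{X\times\mathbb{A}^1}\to h_X$ and $\cRR[h_{X\times\mathbb{A}^1}]\to \cRR[h_X]$ respectively), and since we already know from the previous paragraph that $\cRR[-]$ is left Quillen for the unlocalized structures, it suffices to verify that $\cRR[-]$ carries the set of localizing maps on the presheaf side into weak equivalences on the module side. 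But this is tautological: $\cRR[-]$ sends the generator $h_{X\times\mathbb{A}^1}\to h_X$ to the generator $\cRR[h_{X\times\mathbb{A}^1}]\to \cRR[h_X]$, which is by definition an $\mathbb{A}^1$-weak equivalence in $s\Mod_{\cRR}(Sm_F)^{\mathbb{A}^1}_{inj,\tau}$.

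There is no real obstacle in this argument; the only delicate point to check carefully is the identification $x^{\ast}\circ \cRR[-] \cong \cRR[-]\circ x^{\ast}$, which relies on $\cRR[-]$ being a left adjoint and on the stalk functors being filtered colimits. Once this compatibility is set up, the three statements follow uniformly: monomorphisms are preserved for formal reasons, sectionwise/stalkwise weak equivalences transfer along $\cRR[-]$ because they do so for plain simplicial sets, and the $\mathbb{A}^1$-case reduces to a one-line check on the localizing generators via the universal property of Bousfield localization.
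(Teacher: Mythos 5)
Your proposal is correct, and for the injective and $\tau$-local cases it coincides with the paper's argument: cofibrations are monomorphisms on both sides and are obviously preserved, and weak equivalences are handled sectionwise (Whitehead) respectively stalkwise, using exactly the compatibility $x^{*}\circ\cRR[-]\cong\cRR[-]\circ x^{*}$ that you flag as the one point needing care. Where you genuinely diverge is the $\tau$-$\mathbb{A}^1$-local case. The paper proves directly that $\cRR[-]$ sends \emph{every} $\mathbb{A}^1$-weak equivalence of simplicial presheaves to an $\mathbb{A}^1$-weak equivalence of modules, by passing through Dold--Kan to $C_{*}(-,\cRR)=N\circ\cRR[-]$, using its right adjoint Eilenberg--MacLane functor $K$, and observing that $K$ of an $\mathbb{A}^1$-local complex is an $\mathbb{A}^1$-local space, so that an adjunction square detects the equivalence against all $\mathbb{A}^1$-local targets. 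You instead invoke the standard criterion for a Quillen adjunction to descend along left Bousfield localizations and reduce to checking that $\cRR[-]$ carries the localizing generators $h_{X\times\mathbb{A}^1}\to h_X$ to the localizing generators $\cRR[h_{X\times\mathbb{A}^1}]\to\cRR[h_X]$ on the module side, which holds by inspection. Your route is shorter and verifies strictly less; it is legitimate here because all objects are cofibrant in the injective structures, so no cofibrant replacement of the generators is needed, and the stronger statement the paper proves (preservation of all $\mathbb{A}^1$-weak equivalences) is then recovered from yours by Ken Brown's lemma. The paper's route has the mild advantage of exhibiting the right adjoint $K$ explicitly, which is reused in spirit later, but as a proof of this proposition your version is cleaner.
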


\begin{proof}

The cofibrations in the injective (resp. $\tau$-local and $\tau$-$\mathbb{A}^1$-local) model structure are the monomorphisms, which clearly are preserved by $\cRR[-]$
                                              
We claim that $\cRR[-]$  preserves trivial cofibrations in the injective (resp. $\tau$-local and $\tau$-$\mathbb{A}^1$-local) model structure. For the injective model structure, this follows because every sectionwise weak equivalence induces a sectionwise homology equivalence by the Whitehead Theorem. 

For the $\tau$-local model structure, let $x$ be a point in the Grothendieck site $(Sm_F)_\tau$. Since $\cRR[-]$ commutes with filtered colimits   $\cRR[x^{*}(\cX)]=x^{*}(\cRR[\cX])$, and then  by the Whitehead theorem again it follows that $\cRR[-]$ preserves $\tau$-local weak equivalences.

For the $\tau$-$\mathbb{A}^1$-local model structure, we claim that  $\cRR[-]$ preserves $\mathbb{A}^1$-weak-equivalences. To show this, let $N:\sMod(Sm_F)\rightarrow Cplx(Sm_F, \Mod_{\cRR})$ be the functor of taking the normalizing chain complex of simplicial presheaves of modules and let  $C_{*}(-,\cRR)=N\circ \cRR[-]$.

\[\xymatrix{s\Psh(Sm_F)\ar[r]^{\cRR[-]}\ar@/^2.0pc/[rr]^{C_{*}(-,\cRR)}&\sMod(Sm_F)\ar[r]<0.5ex>^-N&Cplx(Sm_F, \Mod_{\cRR})}\] 

Notice that $A_{*}\rightarrow B_{*}$ is an $\mathbb{A}^1$-weak equivalence in $\sMod(Sm_F)$ when $N(A_{*})\rightarrow N(B_{*})$ is an $\mathbb{A}^1$-weak equivalence of $Cplx(Sm_F, \Mod_{\cRR})$. We need to show that $C_{*}(-,\cRR)$ transforms $\mathbb{A}^1$-weak equivalences to $\mathbb{A}^1$-weak equivalences. Recall that $C_{*}(-,\cRR)$ has a right adjoint 
             \[K:Cplx(Sm_F, \Mod_{\cRR})\rightarrow s\Psh(Sm_F)\] called the Eilenberg-MacLane space functor.  If $C_{*}$ is an $\mathbb{A}^1$-local complex then $K(C_{*})$ is an $\mathbb{A}^{1}$-local space. We claim that  $C_{*}(-,\cRR)$ preserves $\mathbb{A}^1$-weak equivalence in $s\Psh(Sm_F)$. 
 Let $\cX\rightarrow \cY$ be an $\mathbb{A}^1$-weak equivalence and $C_{*}$ be an $\mathbb{A}^1$-local complex $C_{*}$, since $K(C_{*})$ is $\mathbb{A}^1$-local and by adjointness 
 \[\xymatrix{Hom(\cY, K(C_{*}))\ar[r]^{\cong}\ar[d]_{\cong}&Hom(\cX, K(C_{*})\ar[d]_{\cong}\\
                  Hom(C_{*}(\cY,\cRR),C_{*})\ar[r]&Hom(C_{*}(\cX,\cRR),C_{*})}\] then the claim follows and $\cRR[-]$ preserves $\mathbb{A}^1$-weak equivalences.


\end{proof}


\section{Homotopy theory for simplicial coalgebras and $\mathbb{A}^1$-Goerss Theorem for $k$-algebraically closed}\label{leftcoal}

Since $\mbox{coCAlg}_{R}(Sm_F)$ is complete and cocomplete by \cite[Theorem 2.5 ]{GoerssPaulG.1999Sht} the category $s\mbox{coCAlg}_{R}(Sm_F)$ is a simplicial category, where the simplicial structure is given by:   
 \[-\otimes-:\sS\times s\mbox{coCAlg}_\cRR(Sm_F)\rightarrow s\mbox{coCAlg}_\cRR(Sm_F)\] defined on sections by 
  \[(K\otimes F)_n(U):=\cRR[K_n]\otimes_\cRR F_n(U)\] for $F\in s\mbox{coCAlg}_\cRR(Sm_F)$ and $K\in \sS$. Giving $\phi:[n]\rightarrow[m]$  the induced map $\phi^{*}$ is giving by the composition
  \[\cRR[K_m]\otimes_\cRR F_m(U)\rightarrow\cRR[K_m]\otimes_\cRR F_n(U)\rightarrow \cRR[K_n]\otimes_\cRR F_n(U) \] and remember that the colimits on $\mbox{coCAlg}_\cRR(Sm_F)$ are created by the forgetful functor then the coalgebra structure on $K\otimes F$ is given by the tensor of the coalgebra structure on $F$ and the coalgebra structure in $\cRR[K]$ induced by the diagonal map $\Delta_K:K\rightarrow K\times K$.
  The mapping space functor is given by

\[\mbox{Map}_{s\mbox{coCAlg}_\cRR(Sm_F)}(F,G)_n:=\mbox{Hom}_{s\mbox{coCAlg}_\cRR(Sm_F)}(\cRR[\Delta^n]\otimes F,G).\] 
\\

Given the adjunction
\[ U:s\mbox{coCAlg}_{\cRR}(Sm_F)\rightleftarrows s\mbox{Mod}_{\cRR}(Sm_F): CF \] we would like to lift  a given model structure on $s\mbox{Mod}_{R}(Sm_F)$ along the right adjoint functor in order that the adjunction becomes a Quillen adjunction and that the category $s\mbox{ coCAlg}_{R}(Sm_F)$ satisfies the axioms of a simplicial model structure. This is dual to the usual Quillen's argument of transfer a model structure along a left adjoint, but close to the arguments provided for the existence of the Bousfield Localization. The strategy is essentially provided a class of weak equivalence and cofibrations and prove the existence of the model structure using Theorem \ref{GenModel}.

This proof is essentially given in \cite{MR3073905}. We reproduced the proof here for completeness. 
\begin{theorem}\label{ModelStrCoalgebras}\label{msCoalgebras}
Let $\cRR$ be a commutative ring and $s\Mod_\cRR\cRR(Sm_F)$  the category of simplicial modules endowed with the injective (resp. $\tau$-local and $\tau$-$\mathbb{A}^1$-inj). There is simplicial, cofibrantly generated model category structure on $s\mbox{\rm coCAlg}_{\cRR}(Sm_F)$ left induced by the forgetful functor
\[U:s\mbox{\rm coCAlg}_{\cRR}(Sm_F)\rightarrow s\Mod_\cRR(Sm_F)\] where the class of weak equivalences in $\cW_{\cRR}$  (resp. $\cW_{\cRR,\tau}$ and $\cW_{\cRR,\tau}^{\mathbb{A}^1}$) is given by the maps of coalgebras $f:C\rightarrow D$ such that $U(f): U(C)\rightarrow U(D)$ is an object-wise (resp. $\tau$, $\tau$-$\mathbb{A}^1$) weak equivalence  and the set of $U$-monomorphisms between $\kappa$-presentable objects is the generating set of cofibrations (for any choice of a large enough regular cardinal $\kappa$).

\end{theorem}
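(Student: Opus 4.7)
The plan is to produce the model structure by invoking the generating criterion \ref{GenModel} (the recognition theorem for cofibrantly generated model categories), exactly as in the proof of Theorem \ref{mono} but now for the left-induced structure on $s\mathrm{coCAlg}_{\cRR}(\mathrm{Sm}_F)$. First I would record that $s\mathrm{coCAlg}_{\cRR}(\mathrm{Sm}_F)$ is locally presentable: this follows from Proposition \ref{pshcofreesw} applied to the simplicial enhancement (equivalently, $s\mathrm{coCAlg}_{\cRR}(\mathrm{Sm}_F)=\coCAlg_{\cRR}(\Delta^{op}\times \mathrm{Sm}_F)$ for the sectionwise tensor product), which supplies all small limits and colimits and a strong generator consisting of the $\kappa$-presentable coalgebras for some regular cardinal $\kappa$. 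The weak equivalences $\cW$ (object-wise, $\tau$-local, or $\tau$-$\mathbb{A}^1$-local) are defined by $U$; since $U$ preserves filtered colimits (it is a right adjoint to $CF$ of Proposition \ref{pshcofreesw}) and the weak equivalences in $s\Mod_{\cRR}(\mathrm{Sm}_F)$ are accessible by Theorem \ref{mono}, the class $\cW$ is accessible and stable under retracts and satisfies 2-out-of-3 (these properties transfer immediately from $s\Mod_{\cRR}(\mathrm{Sm}_F)$).

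Next I would define cofibrations as the $U$-monomorphisms and take for the generating set $I$ the collection of all $U$-monomorphisms between $\kappa$-presentable coalgebras, for $\kappa$ sufficiently large. By the standard small object argument (the analogue of Lemma \ref{spsh} for coalgebras, together with the construction used in the proof of Theorem \ref{lpstronggen}), every $U$-monomorphism is a transfinite composition of pushouts of elements of $I$: given any inclusion of coalgebras one can filter the target by $\kappa$-presentable subcoalgebras constructed as in Theorem \ref{lpstronggen}. For the generating trivial cofibrations $J$, I would take the subset of $I$ consisting of those $U$-monomorphisms which are additionally in $\cW$; the key point here is that colimits of coalgebras are created by $U$, so a transfinite composition or pushout of $U$-monomorphisms that are $\cW$-equivalences is again a $\cW$-equivalence — this reduces to the analogous closure property in $s\Mod_{\cRR}(\mathrm{Sm}_F)$ already established in the proof of Theorem \ref{mono} (monomorphisms that are object-wise, $\tau$-local or $\tau$-$\mathbb{A}^1$-local weak equivalences are closed under pushouts and transfinite composition).

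The main obstacle will be verifying the compatibility between $\cW$ and $I$-cofibrations sufficient to apply \ref{GenModel}: specifically, that every map having the right lifting property with respect to $I$ is a weak equivalence, and that trivial cofibrations form a small generated class. For the former, a map $f:C\to D$ with the right lifting property against all $U$-monomorphisms between $\kappa$-presentable coalgebras is shown to be a $\cW$-equivalence by factoring arbitrary maps into $C$ and $D$ through $\kappa$-presentable subcoalgebras (using Theorem \ref{lpstronggen}) and deducing that $U(f)$ has the right lifting property against every $U$-monomorphism in $s\Mod_{\cRR}(\mathrm{Sm}_F)$, hence is a trivial fibration there and in particular a weak equivalence. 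For the latter, the generation of trivial cofibrations reduces to a cardinality bound: given any $U$-monomorphism $\iota:A\hookrightarrow B$ in $\cW$ and any map $A\to X$ with $X$ of size $\leq \kappa$, one finds a sub-coalgebra $B'\subset B$ of size $\leq \kappa$ containing the image and with $A\cap B'\to B'$ still in $\cW$ (this is the coalgebra analogue of the bounded monomorphism argument recalled in the discussion of injective model structures and used in Theorem \ref{mono}).

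Finally, the simplicial structure is already present by the formulas for $-\otimes-$ and $\mathrm{Map}_{s\mathrm{coCAlg}_{\cRR}(\mathrm{Sm}_F)}(-,-)$ recalled at the start of the section; the pushout-product axiom SM7 reduces, via $U$, to SM7 in $s\Mod_{\cRR}(\mathrm{Sm}_F)$ (whose simplicial action is the underlying one), because $U$ preserves $-\otimes K$ and the tensor of a $U$-monomorphism with a cofibration of simplicial sets is a $U$-monomorphism. The fact that the adjunction $(U,CF)$ is Quillen is then immediate: $U$ sends cofibrations and trivial cofibrations in the induced structure to, respectively, monomorphisms and monomorphic weak equivalences in $s\Mod_{\cRR}(\mathrm{Sm}_F)$, by construction.
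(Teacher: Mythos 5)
Your overall strategy coincides with the paper's: both verify the hypotheses of the recognition Theorem \ref{GenModel} for the classes $\cW$ and the $U$-monomorphisms, with accessibility of $\cW$ obtained by pulling back along an accessible functor (a small slip: $U$ is the \emph{left} adjoint of $CF$, not its right adjoint; the conclusion that it preserves filtered colimits still holds since it preserves all colimits). The parts on local presentability, two-out-of-three, and saturation of $C\cap\cW$ via creation of colimits by $U$ match the paper's argument.

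However, there is a genuine gap at the decisive step, condition \it(5)\rm\ of Theorem \ref{GenModel}: showing that a map $f:C\rightarrow D$ with the right lifting property against all $U$-monomorphisms between $\kappa$-presentable coalgebras lies in $\cW$. You assert that one "deduces that $U(f)$ has the right lifting property against every $U$-monomorphism in $s\Mod_{\cRR}(Sm_F)$" by factoring through $\kappa$-presentable subcoalgebras. This does not follow: in a module-level lifting square $A\rightarrow U(C)$, $B\rightarrow U(D)$ against a monomorphism $A\rightarrow B$ of simplicial modules, the horizontal maps are merely module maps, not coalgebra maps, so the lifting property of $f$ in $s\mbox{coCAlg}_{\cRR}(Sm_F)$ cannot be applied to them, and enlarging images to subcoalgebras does not convert the square into a coalgebra lifting problem. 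The paper's proof supplies exactly the missing device: the mapping cylinder factorization of Lemma \ref{Cylinder}, which factors any coalgebra map $F_\alpha\rightarrow G_\alpha$ as $F_\alpha\xrightarrow{i_\alpha}D_\alpha\xrightarrow{p_\alpha}G_\alpha$ with $i_\alpha$ a $U$-monomorphism between $\kappa$-presentable coalgebras and $U(p_\alpha)$ a trivial fibration in $s\Mod_{\cRR}(Sm_F)_{proj}$. One then tests $U(f)$ only against the \emph{projective} generating cofibrations $\cRR[h_X]\otimes\cRR[\partial\Delta^n]\rightarrow\cRR[h_X]\otimes\cRR[\Delta^n]$, lifting first through $U(p_\alpha)$ at the module level and then against $i_\alpha$ at the coalgebra level (Proposition \ref{Modelcond}); this shows $U(f)$ is a projective trivial fibration, hence a section-wise weak equivalence. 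Without this (or an equivalent) bridge between coalgebra lifting data and module lifting data, your verification of condition \it(5)\rm\ does not go through. Relatedly, your proposed explicit set $J$ of generating trivial cofibrations via a bounded-cofibration argument for coalgebras is unproved and unnecessary: Theorem \ref{GenModel} is designed precisely to avoid exhibiting $J$.
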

\begin{proof}
It is enough to show that the class of weak equivalences $\cW_\cRR$ (resp. $\cW_{\cRR,\tau}$, $\cW_{\cRR,\tau}^{\mathbb{A}^1}$)  and class of $U$-monomorphisms satisfies the conditions \it(1)-(5)   \rm  from Theorem \ref{GenModel}.  \rm   Condition \it (3) \rm is satisfied because $\cW_\cRR$ (resp. $\cW_{\cRR,\tau}$, $\cW_{\cRR,\tau}^{\mathbb{A}^1}$) is the inverse image of $\cW$ (resp. $\cW_s$,$\cW_{\tau}^{\mathbb{A}^1}$) under the composition $u\circ U$ where $U:s\mbox{\rm coCAlg}_{\cRR}(Sm_F)\rightarrow s\Mod_\cRR(Sm_F)$ and $u:s\Mod_\cRR(Sm_F)\rightarrow s\Psh(Sm_F)$. Since $U$ is a left adjoint it preserves all colimits and $u$ is a right adjoint then it preserves filtered colimits it follows that $u\circ U$ is an accessible functor. Then by proposition \ref{Modelinverse} the class of weak equivalences $\cW_\cRR$ (resp. $\cW_{\cRR,\tau}$, $\cW_{\cRR,\tau}^{\mathbb{A}^1}$) is accessible embedded in the category of arrows $Arrow(s\mbox{\rm coCAlg}_{\cRR}(Sm_F))$. Condition \it(4)   \rm   is obvious. Conditions \it (1)   \rm  and \it(2)   \rm  are satisfied because colimits in $s\mbox{\rm coCAlg}_{\cRR}(Sm_F)$ are created by the forgetful functor and Theorem \ref{mono}.
It remains to show condition \it(5) \rm, this will be a consequence of Lemma \ref{Modelcond} below. 
\end{proof}

We call \it{left injective}  \rm,  \it{left $\tau$-local}  \rm   and   \it{left $\tau$-$\mathbb{A}^1$-local}   \rm  model structures for the left model structures induced in $s\mbox{\rm coCAlg}_{\cR}(Sm_F)$ in the previous theorem.%

\begin{lemma}\label{Cylinder}
Any morphism $f:F\rightarrow G$ in  $s\mbox{\rm coCAlg}_{\cRR}(Sm_F)$ can be factored as 
\[F\xrightarrow{i} D\xrightarrow{q} G\] where $U(i)$ is a cofibration in $s\Mod_{\cRR}(Sm_F)_{inj}$  and $U(q)$ is a trivial fibration in $s\Mod_{\cRR}(Sm_F)_{proj}$.
\end{lemma}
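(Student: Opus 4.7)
The plan is to factor the underlying map in $s\Mod_{\cRR}(Sm_F)_{proj}$ and then lift the factorization to $s\mathrm{coCAlg}_{\cRR}(Sm_F)$ using the cofree coalgebra adjunction $U \dashv CF$ provided by Proposition \ref{pshcofreesw}.

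First, since $s\Mod_{\cRR}(Sm_F)_{proj}$ is a cofibrantly generated model category (Theorem \ref{mono}), the small object argument applied to $U(f)$ yields a factorization
$$U(F) \xrightarrow{\alpha} X \xrightarrow{\beta} U(G),$$
with $\alpha$ a projective cofibration (in particular a monomorphism, hence a cofibration in $s\Mod_{\cRR}(Sm_F)_{inj}$) and $\beta$ a trivial fibration. Next, define $D$ as the pullback in $s\mathrm{coCAlg}_{\cRR}(Sm_F)$ of
$$G \xrightarrow{\eta_G} CF(U(G)) \xleftarrow{CF(\beta)} CF(X),$$
where $\eta$ denotes the unit of $U \dashv CF$; this pullback exists by the completeness of the category of coalgebras (Proposition \ref{pshcofreesw}). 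Write $q : D \to G$ for the canonical projection. By naturality of $\eta$ and the identity $\beta \circ \alpha = U(f)$, the two coalgebra maps $\eta_G \circ f$ and $CF(\beta) \circ CF(\alpha) \circ \eta_F$ from $F$ to $CF(U(G))$ coincide, so the universal property of the pullback produces a canonical morphism $i : F \to D$ with $q \circ i = f$.

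It remains to verify that $U(i)$ is a monomorphism and that $U(q)$ is a trivial fibration in $s\Mod_{\cRR}(Sm_F)_{proj}$. This step is the main obstacle, since $U$ is a left adjoint and does not generally preserve the pullback defining $D$. The essential observation is that, by the triangle identities of the adjunction, $U(\eta_G)$ is a split monomorphism, with retraction $\epsilon_{U(G)} : UCF(U(G)) \to U(G)$ given by the counit. This splitting furnishes a section of the top-left leg of the underlying square and allows one to identify $U(D)$ with a retract of the module-theoretic pullback of $UCF(\beta)$ along $U(\eta_G)$, and hence $U(q)$ with (a retract of) $\beta$. Combined with the presheaf form of the Fundamental Theorem of Coalgebras (Proposition \ref{FTC} and Proposition \ref{pshcofreesw}), which allows reducing lifting problems against the generating projective cofibrations $\cRR[h_X \times \partial\Delta^n \to h_X \times \Delta^n]$ to finite-dimensional subcoalgebras where the coalgebra structure of $CF(X)$ supplies the requisite lifts, these considerations show that $U(q)$ inherits from $\beta$ the property of being a trivial fibration in $s\Mod_{\cRR}(Sm_F)_{proj}$, while the analogous analysis applied to $\alpha$ yields that $U(i)$ is a monomorphism.
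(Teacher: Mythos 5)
There is a genuine gap, and it sits exactly where you flag "the main obstacle." Your $D$ is a pullback in $s\mathrm{coCAlg}_{\cRR}(Sm_F)$, and the forgetful functor $U$, being a left adjoint, gives you only a comparison map $U(D)\to U(G)\times_{UCF(U(G))}UCF(X)$ with no reason to be an isomorphism or even split. The splitting you invoke, $\epsilon_{U(G)}\circ U(\eta_G)=\mathrm{id}$, is a splitting in modules by a map that is \emph{not} a coalgebra map, so it induces nothing on $D$ and does not exhibit $U(D)$ as a retract of the module-theoretic pullback. Moreover, even if such an identification held, the map you would control is a base change of $UCF(\beta)$, not of $\beta$: to conclude anything you would need $CF$ to carry the trivial fibration $\beta$ to a sectionwise trivial fibration, i.e.\ essentially that $U\dashv CF$ is a Quillen adjunction for these structures — which is not established anywhere (and avoiding exactly this is the point of the left-induction machinery via Theorem \ref{GenModel}). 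The closing appeal to the Fundamental Theorem of Coalgebras to "supply the requisite lifts" and to deduce that $U(i)$ is a monomorphism is not an argument; no concrete lifting or injectivity is produced.

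The paper's proof goes the other way around and stays entirely on the colimit side, where $U$ \emph{does} create everything: it forms the mapping cylinder of $f$ in coalgebras, using the cylinder $F\otimes\cRR[\Delta^1]$ (with the coalgebra structure on $\cRR[\Delta^1]$ coming from the diagonal) and the pushout of $i_0$ along $f$. Since colimits in $s\mathrm{coCAlg}_{\cRR}(Sm_F)$ are created by $U$, the underlying diagram is a genuine pushout of presheaves of simplicial modules; $U(i_0)$ is a sectionwise trivial cofibration, hence so is its cobase change $U(j)$, and two-out-of-three (using that $q$ retracts $j$) shows $U(q)$ is a sectionwise weak equivalence, while $F\to F\otimes\cRR[\Delta^1]\to Cyl(f)$ is a sectionwise monomorphism. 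If you want to salvage your strategy you would have to prove a right-Quillen-type statement for $CF$ or analyze limits of coalgebras concretely; the cylinder argument avoids both, which is why it is the standard route (Goerss--Jardine--Raptis, as the paper notes).
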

\begin{proof}
The proof of this result is the mapping cylinder construction this argument goes back to Goerss, Jardine and Raptis in the setting of coalgebras with the section-wise monoidal structure. 

Choose the cylinder object of $F$  in $s\mbox{\rm coCAlg}_{\cRR}(Sm_F)$ given by:
 \[\xymatrix{F\oplus F\ar[r]^-{\nabla} \ar[d]_-{(i_0,i_1)} & F\\
            F\otimes \cRR[\Delta^1] \ar[ru]_p  &  }\] here the coalgebra structure in $\cRR[\Delta^{1}]$ is given by the diagonal map $\Delta^1\rightarrow \Delta^1\times \Delta^1$. The mapping cylinder $Cyl(f)$, is constructed  by the push-out diagram in $s\mbox{\rm coCAlg}_{\cRR}(Sm_F)$. 

\[
\xymatrix{
F\otimes \cRR\ar[r]^-f\ar[d]^-{i_0}  &G\ar[d]^-j\\
F\otimes \cRR[\Delta^1]\ar[r]^-{j}& Cyl(f)}.
\] Since $(f\circ p)\circ i_0=f$ by the universal property of the push-out diagram there exists a unique map  $q: Cyl(f)\rightarrow B$ such that $f\circ p=q\circ j$ and $G\rightarrow Cyl(f)\rightarrow G$ is the identity map.  Then we can construct the factorization of $f:F\rightarrow G$ given by:   
\[F\xrightarrow{ i_1} F\otimes \cRR[\Delta^1]\xrightarrow{j} Cyl(f)\xrightarrow{p} G\] the composition of the first two maps is a section-wise monomorphism in $s\Mod_{\cRR}(Sm_F)$, in other words it is a cofibration in $s\Mod_{\cRR}(Sm_F)_{inj}$.  The map $U(i_0)$ is a trivial cofibration in $s\Mod_{\cRR}(Sm_F)_{inj}$, and trivial cofibrations are preserved under push-outs, then $U(j)$  is a section-wise weak equivalence, then $U(q)$ is also a section-wise weak equivalence, in other words a trivial fibration in $s\mbox{Mod}_\cRR(Sm_F)_{proj}$.

\end{proof}

We want to prove that  the maps which has the right lifting property with respect to all the cofibrations are weak equivalences; this is the remaining condition to check in the proposition. In suffices to show the following proposition.

\begin{proposition}\label{Modelcond}
Let $\cI$ the set of monomorphisms between $\kappa$-presentable objects. Then the $\cI$-inj is contained in the set of weak equivalences $\cW_\cRR$ (resp. $\cW_{\cRR,\tau}$ and $\cW_{\cRR,\tau}^{\mathbb{A}^1}$) 
\end{proposition}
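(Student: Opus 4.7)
The plan is to follow the standard left-lifting strategy used by Goerss, Jardine and Raptis: given $q\in\cI\mbox{-inj}$, construct a factorization of $q$ whose ``good'' part is a weak equivalence after applying $U$, then exhibit $q$ as a retract of that factor. The first task is therefore to identify the saturation of $\cI$ inside $s\coCAlg_\cR(Sm_F)$. Concretely, I would prove the auxiliary statement: every $U$-monomorphism in $s\coCAlg_\cR(Sm_F)$ lies in the weakly saturated class generated by $\cI$. This is a presentability argument, parallel to Theorem \ref{lpstronggen} and Corollary \ref{lpres}: any object $C\in s\coCAlg_\cR(Sm_F)$ is a $\kappa$-filtered union of its $\kappa$-presentable subcoalgebras (for $\kappa$ chosen as in the statement of Theorem \ref{ModelStrCoalgebras}), and a $U$-monomorphism $C\hookrightarrow D$ can therefore be written as a transfinite composite of pushouts of maps in $\cI$ by induction along a well-ordering of a set of $\kappa$-presentable subcoalgebras of $D$ containing the image of $C$.

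Granting that, I would take $q\colon C\to D$ in $\cI\mbox{-inj}$ and apply Lemma \ref{Cylinder} to obtain a factorization
\[
C\xrightarrow{\,i\,} E\xrightarrow{\,p\,} D,\qquad U(i)\in\mbox{cof}(s\Mod_\cR(Sm_F)_{inj}),\ U(p)\in\mbox{triv.\ fib.}(s\Mod_\cR(Sm_F)_{proj}).
\]
By the auxiliary statement, $i$ lies in the weakly saturated class generated by $\cI$, so by hypothesis $q$ has the right lifting property against $i$. The lifting diagram
\[
\xymatrix{
C\ar@{=}[r]\ar[d]_-{i} & C\ar[d]^-{q}\\
E\ar[r]_-{p}\ar@{.>}[ur]^-{r} & D
}
\]
admits a filler $r\colon E\to C$, which exhibits $q$ as a retract of $p$ in the arrow category of $s\coCAlg_\cR(Sm_F)$. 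Applying $U$, we find that $U(q)$ is a retract of $U(p)$; since $U(p)$ is a trivial fibration in $s\Mod_\cR(Sm_F)_{proj}$, in particular it is an object-wise weak equivalence, hence belongs to each of $\cW_\cR$, $\cW_{\cR,\tau}$, and $\cW_{\cR,\tau}^{\mathbb{A}^1}$ (these classes only get larger under Bousfield localization). Retracts of weak equivalences are weak equivalences, which gives $U(q)\in\cW_\cR$ (resp.\ $\cW_{\cR,\tau}$, $\cW_{\cR,\tau}^{\mathbb{A}^1}$), so $q$ is itself a weak equivalence in the appropriate class.

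The main obstacle is the auxiliary claim that every $U$-mono is in the saturation of $\cI$: one needs the cardinal $\kappa$ to be chosen large enough that, simultaneously, every coalgebra is a $\kappa$-filtered colimit of $\kappa$-presentable subcoalgebras (invoking local presentability of $s\coCAlg_\cR(Sm_F)$, which follows as in Proposition \ref{pshcofreesw}) and that the transfinite attachments can be organized to stay within $\kappa$-presentable objects. The rest of the argument, namely the mapping cylinder factorization of Lemma \ref{Cylinder} plus the retract argument, is formal and does not distinguish between the three model structures under consideration, since $U(p)$ is a section-wise weak equivalence in every case.
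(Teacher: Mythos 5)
Your argument is correct in outline, but it takes a genuinely different route from the paper's. The paper never proves (or uses) your auxiliary claim that every $U$-monomorphism lies in the weak saturation of $\cI$. Instead, it keeps the lifting problem inside $\cI$ itself: it writes $f\colon F\to G$ as a $\kappa$-filtered colimit of maps $f_\alpha\colon F_\alpha\to G_\alpha$ between $\kappa$-presentable coalgebras, observes that a lifting problem against a generating projective cofibration $\cR[h_X]\otimes\cR[\partial\Delta^n]\to\cR[h_X]\otimes\cR[\Delta^n]$ of $s\Mod_\cR(Sm_F)$ factors through some stage $\alpha$ by compactness, and then applies the cylinder factorization of Lemma \ref{Cylinder} \emph{at that stage}, so that the resulting $U$-mono $i_\alpha\colon F_\alpha\to D_\alpha$ has $\kappa$-presentable source and target and is therefore literally an element of $\cI$; the hypothesis $f\in\cI$-inj then produces the lift $D_\alpha\to F$ directly, and one concludes that $U(f)$ is a projective trivial fibration, hence an objectwise weak equivalence. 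Your version applies the cylinder factorization to $q$ globally and runs the classical retract argument, which is cleaner and more standard, but it stands or falls on the saturation lemma you flag: that the arbitrary $U$-mono $i$ coming out of the cylinder is a transfinite composite of pushouts of maps in $\cI$. That lemma is true (it is essentially what makes the structure of Theorem \ref{msCoalgebras} genuinely left-induced, and your sketch --- exhausting $D$ by $\kappa$-presentable subcoalgebras and attaching them one at a time --- is the right idea), but it is a nontrivial extra input: one must check that the intersection of a $\kappa$-presentable subcoalgebra with an arbitrary subcoalgebra is again $\kappa$-presentable and that each successive pushout of coalgebras computes the sum of subobjects inside $D$ (which holds because the forgetful functor creates colimits and the tensor product over a field is exact). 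The trade-off is that the paper's argument is self-contained given Lemma \ref{Cylinder} and local presentability, while yours requires the additional saturation lemma but yields as a by-product the identification of the cofibrations of the left-induced structure with all $U$-monomorphisms. Both correctly conclude by noting that an objectwise weak equivalence lies in all three classes $\cW_\cR$, $\cW_{\cR,\tau}$, $\cW_{\cR,\tau}^{\mathbb{A}^1}$.
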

\begin{proof}
Let $f:F\rightarrow G$ be a map in $\cI$-inj, in order to show that $f$ is a weak equivalence in $s\mbox{\rm coCAlg}_{\cRR}(Sm_F)$ it is enough to show that it is a weak equivalence in $s\Mod_\cRR(Sm_F)_{proj}$.  Then it suffices to show that $U(f):U(F)\rightarrow U(G)$ has the right lifting property with respect to the set of generating cofibrations in $s\Mod_\cRR(Sm_F)_{proj}$ given by:
\[ \{\cRR[h_X]\otimes \cRR[\partial\Delta^n]\rightarrow \cRR[h_X]\otimes\cRR[\Delta^n] \,:\,\,\, X\in Ob(Sm_F), \,\,\, \, n\geq 1\}.\] The map $f:F\rightarrow G$ is a $\kappa$-directed colimit of $\kappa$-presentable objects in $\mbox{Arrw}(\mbox{\rm coCAlg}_{\cRR}(Sm_F))$, i.e. $f$ is of the form $\colim_\alpha f_\alpha:\colim_\alpha F_\alpha\rightarrow \colim_\alpha G_\alpha$ where $F_\alpha$ and $G_\alpha$ are $\kappa$-presentable. Since the forgetful functor $U$ preserves colimits then there exists a factorization:

\[\xymatrix{h_X\otimes \cRR[\partial\Delta^n]\ar[r]\ar[d]& U(F_\alpha) \ar[r] \ar[d]& U(F)\ar[d]\\
                h_X\otimes\cRR[\Delta^n]\ar[r] & U(G_\alpha)\ar[r]& U(G) }
 \] By the Lemma \ref{Cylinder}, there exists a factorization in $s\mbox{coCAlg}_\cRR(Sm_F)$
 \[F_\alpha\xrightarrow{i_\alpha} D_\alpha\xrightarrow{p_\alpha} G_\alpha\]  where $U(p_\alpha)$ is a trivial fibration in $s\Mod_\cRR(Sm_F)_{proj}$, then there exists a morphism \[g_{1,X}:\cRR[h_X]\otimes\cRR[\Delta^n]\dashrightarrow U(D_\alpha)\] such that the diagram commutes:
\[\xymatrix{                                          & U(F_\alpha) \ar[r] \ar[d]      & U(F)\ar[dd]\\
               \cRR[h_X]\otimes \cRR[\partial\Delta^n]\ar[r]\ar[d]\ar[ur]& U(D_\alpha)  \ar[d]\ar@{-->}[ur]^-{g_2}& \\
                \cRR[h_X]\otimes\cRR[\Delta^n]\ar[r] \ar@{-->}[ur]^-{g_{1,X}} \ar[r]  & U(G_\alpha)\ar[r]& U(G) }
\]
by the construction of the cylinder object $D_\alpha$ is also $\kappa$-presentable then the map $i_\alpha:F_\alpha\rightarrow D_\alpha$ is an element in $\cI$ then by assumption there exists a morphism $g_2$ in $s\mbox{\rm coCAlg}_{\cRR}(Sm_F)$ which makes the diagram commutes: 
\[\xymatrix{F_\alpha\ar[r]\ar[d]& F\ar[d]\\
                D_\alpha \ar[r]\ar@{-->}[ur]-^{h_2} &G }.
 \]Then the composition $U(g_2)\circ g_1$ provides the lift and then $U(f)$ is a weak equivalence.
\end{proof}

\begin{rmk}
Let $s\Mod_{\cRR}(Sm_F)$ be the category of presheaves of simplicial modules over $Sm_F$, let  $\mathrm{L}s\Mod_{\cRR}(Sm_F)_{inj}$  and $\mathrm{L}s\Mod_{\cRR}(Sm_F)_{proj}$ be the Bousfield localizations of the injective and projective model structure with the same class of weak equivalences $\cW'$. Recall that under the left Bousfield localization the class of cofibrations remains the same, then the class of trivial fibrations is preserved, because the former are the maps which satisfies the right lifting property with respect to all the cofibration. Therefore Lemma \ref{Cylinder} says that every morphism $f:F\rightarrow G$ in $s\mbox{\rm coCAlg}_{\cRR}(Sm_F)$ can be factored as 
\[F\xrightarrow{i} D\xrightarrow{q} G\] where $U(i)$ is a cofibration in $\mathrm{L}s\Mod_{\cRR}(Sm_F)_{inj}$  and $U(q)$ is a trivial fibration in $\mathrm{L}s\Mod_{\cRR}(Sm_F)_{proj}$ and Proposition \ref{Modelcond}  tells that $\cI-inj\subset \cW'$.   Then by Theorem \ref{GenModel} there exists a model category structure in $s\mbox{\rm coCAlg}_{\cRR}(Sm_F)$ where the weak equivalences are  $\cW'$. Furthermore, note that this model structure is the left induced model structure from $\mathrm{L}s\Mod_{\cRR}(Sm_F)_{inj}$. Thus cofibrations in the former model structure remains the same than cofibration in the left induced model structure from $s\Mod_{\cRR}(Sm_F)_{inj}$. Then it is a left Bousfield localization. 

In particular the  \it{left $\tau$-local}  \rm   and   \it{left $\tau$-$\mathbb{A}^1$-local}   \rm  model structures induced in $s\mbox{\rm coCAlg}_{\cR}(Sm_F)$ are left Bousfield localizations of the \it{left injective}  \rm  model structure in $s\mbox{\rm coCAlg}_{\cR}(Sm_F)$.
 \end{rmk}


\subsection{Homology Localization}

\begin{definition}
The $\mathbb{A}^1$-singular chain complex of $\cX$ with $\cRR$ coefficients, denoted by $C_{*}^{\mathbb{A}^1}(\cX,\cRR)$, is defined to be the Nisnevich-$\mathbb{A}^1$-localization $\mathrm{L}_{\mathbb{A}^1}\mathrm{L}_{Nis}(C_{*}(\cX,\cRR))$. We just write  $C_{*}(\cX)$ for $C_{*}(\cX,\mathbb{Z})$. The $\mathbb{A}^1$-homology sheaves of $\cX$ with coefficients in $\cRR$ are defined by $\mathbf{H}_{i}^{\mathbb{A}^1}(\cX,\cRR):=a_{\tau}H_i(C_{*}^{\mathbb{A}^1}(\cX,\cRR))$.   
\end{definition}

\begin{definition}
 A morphism $\cX\rightarrow \cY$ of motivic spaces is called an $\mathbf{H}^{\mathbb{A}^1}\cRR$-homology equivalence if the induced morphism $\mathbf{H}^{\mathbb{A}^1}_{*}(\cX,\cRR)\rightarrow \mathbf{H}^{\mathbb{A}^1}_{*}(\cY,\cRR)$ is an isomorphism. A space $\cZ$ is called $\mathbf{H}^{\mathbb{A}^1}\cRR$-local if fo every $\mathbf{H}^{\mathbb{A}^1}\cRR$-equivalence $\cX\rightarrow \cY$ the induced map
\[Map(\cY,\cZ)\rightarrow Map(\cY,\cZ) \] is a weak equivalence. 
A map $\cX\rightarrow \cX_{\mathbf{H}^{\mathbb{A}^1}\cRR}$ is called $\mathbf{H}^{\mathbb{A}^1}\cRR$-localization if it is an $\mathbf{H}^{\mathbb{A}^1}\cRR$-equivalence and $\cX_{\mathbf{H}^{\mathbb{A}^1}\cRR}$ is $\mathbf{H}^{\mathbb{A}^1}\cRR$-local.
\end{definition}

\begin{rmk}
Let $\mathrm{L}_{\mathbb{A}^1}\mathrm{L}_{\tau}s\Psh(Sm_F)$ be the category of simplicial presheaves endowed with the injective-$\tau$-$\mathbb{A}^1$-local model structure, by \cite{MorelVoevodsky} the category is proper combinatorial model category. Then by the methods in \cite{goerss_jardine_1998} the Bousfield localization with respect to the class of  $\mathbf{H}^{\mathbb{A}^1}\cRR$-homology equivalence should exist.
\end{rmk}


\subsection{$\mathbb{A}^1$-Goerss Theorem for $\cR$ algebraically closed} 

Let $k$ be an algebraically closed field. In this section we will generalize Theorem C in \cite{MR1363853}, \it{i.e}   \rm    the diagonal coalgebra $\cR[\cX]$ determines the homotopy type of $\cX$ up to Bousfield Localization with respect to $\mathbb{A}^1$-homology with coefficients in $\cR$.

\begin{proposition}\label{adjGoerss}
Let $\cRR$ be a commutative ring. Endow the category $s\Psh(Sm_F)$ with the injective (resp. $\tau$-local and $\tau$-$\mathbb{A}^1$-local) model structure and $\scoCAlg$ with the respective model structure from \ref{msCoalgebras}.
Then the adjunction 
\[\cRR^{\delta}[-]:s\Psh(Sm_F)\rightleftarrows \scoCAlg:(-)^{gp}\] is a Quillen adjunction.
\end{proposition}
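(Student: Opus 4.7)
The plan is to verify directly that $\cRR^{\delta}[-]$ preserves cofibrations and trivial cofibrations, reducing the argument to the Quillen adjunction of Proposition~\ref{free-forget} together with the description of the left-induced structure from Theorem~\ref{msCoalgebras}. The decisive structural observation is that
\[
U\circ\cRR^{\delta}[-]=\cRR[-]
\]
as functors $s\Psh(Sm_F)\to s\Mod_{\cRR}(Sm_F)$, which is immediate from Remark~\ref{diagonalgplike}: the underlying simplicial $\cRR$-module of $\cRR^{\delta}[\cX]$ is precisely the free module $\cRR[\cX]$, and only the coalgebra structure (which $U$ discards) records the diagonal.

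For cofibrations, recall that in all three model structures on $s\Psh(Sm_F)$ the cofibrations are the monomorphisms. Given a monomorphism $f:X\hookrightarrow Y$, Proposition~\ref{free-forget} implies that $U(\cRR^{\delta}[f])=\cRR[f]$ is a monomorphism in $s\Mod_{\cRR}(Sm_F)$. By Theorem~\ref{msCoalgebras} the generating cofibrations of $\scoCAlg$ are the $U$-monomorphisms between $\kappa$-presentable coalgebras; fixing a cardinal $\kappa$ large enough that $\cRR^{\delta}[-]$ sends $\kappa$-bounded simplicial presheaves to $\kappa$-presentable coalgebras (which is possible since $\cRR^{\delta}[-]$ is a left adjoint), the image under $\cRR^{\delta}[-]$ of a generating cofibration of the injective structure on $s\Psh(Sm_F)$ is a generating cofibration of $\scoCAlg$. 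Since every monomorphism in $s\Psh(Sm_F)$ is a retract of a relative cell complex built from generating cofibrations, and $\cRR^{\delta}[-]$ preserves pushouts, transfinite compositions, and retracts, this extends to arbitrary monomorphisms.

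For trivial cofibrations, the weak equivalences in $\scoCAlg$ are by construction exactly the $U$-weak equivalences (Theorem~\ref{msCoalgebras}). Hence if $f$ is a monomorphic weak equivalence in $s\Psh(Sm_F)$ (in any of the three model structures), Proposition~\ref{free-forget} gives that $\cRR[f]=U(\cRR^{\delta}[f])$ is a weak equivalence in $s\Mod_{\cRR}(Sm_F)$, so $\cRR^{\delta}[f]$ is a $U$-weak equivalence and, combined with the previous paragraph, a trivial cofibration in $\scoCAlg$.

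Since Proposition~\ref{free-forget} and Theorem~\ref{msCoalgebras} apply uniformly to the injective, $\tau$-local, and $\tau$-$\mathbb{A}^1$-local structures, the same argument works in all three cases. There is no real obstacle: the statement is a formal consequence of the factorisation $U\circ\cRR^{\delta}[-]=\cRR[-]$ once one has in hand the Quillen adjunction for the free-forgetful pair on modules and the left-induced structure on coalgebras. The only mild care needed is in matching the combinatorial parameter $\kappa$ controlling the generating cofibrations on each side, which is a routine size estimate.
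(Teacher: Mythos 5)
Your proposal is correct and takes essentially the same route as the paper: both arguments rest on the identity $U\circ\cRR^{\delta}[-]=\cRR[-]$ together with Proposition \ref{free-forget} and the left-induced model structure of Theorem \ref{msCoalgebras}. The only difference is that your cofibration step (via generating cofibrations, cell complexes and a size estimate on $\kappa$) is more roundabout than necessary: since the structure on $\scoCAlg$ is left induced by $U$, its cofibrations are precisely the maps whose underlying map of simplicial modules is a monomorphism, so preservation of cofibrations follows at once from Proposition \ref{free-forget}.
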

\begin{proof}

It is enough to show that $\cRR^{\delta}[-]$ preserves cofibrations and acyclic cofibrations. Since the model structure in  $\scoCAlg$ is left induced by the forgetful functor $U:\scoCAlg\rightarrow \sMod(Sm_F)$, where we endow the category  $\sMod(Sm_F)$ with the injective (resp. $\tau$-local and $\tau$-$\mathbb{A}^1$-local) model structure, it is enough to show that $\cRR[-]$ preserves cofibrations and acyclic cofibrations.
This follows from proposition \ref{free-forget}.

\[\xymatrix{s\Psh(Sm_F)\ar[r]^-{\cRR^{\delta}[-]}\ar[rd]_-{\cRR[-]}&\scoCAlg\ar[d]^-U\\
                                              &\sMod(Sm_F)}.\]

\end{proof}

\begin{theorem}\label{GoerssThm}
 Let $k$ be an algebraically closed field. Then the functor below is fully faithful 
\[\cR^{\delta}[-]:\mathrm{L}_{\mathbf{H}^{\mathbb{A}^1}\cR}\mathrm{L}_{mot}s\Psh(Sm_F)\rightarrow \mathrm{L}_{mot}\scoCAlg.\]  Furthermore, for every motivic space $\cX$ the derived unit map 
\[X\rightarrow (\cR^{\delta}[\cX]^{fib})^{gp} \] exhibits the target as the $\mathbf{H}^{\mathbb{A}^1}\cR$-localization of $\cX$. 
\end{theorem}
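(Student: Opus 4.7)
The plan is to deduce fully-faithfulness from the second assertion: once we know that $\cX \to (\cR^{\delta}[\cX]^{fib})^{gp}$ is the $\mathbf{H}^{\mathbb{A}^1}\cR$-localization, the universal property of left Bousfield localization, combined with the Quillen adjunction of Proposition \ref{adjGoerss}, forces the descended left adjoint to be fully faithful with right adjoint the total right derived $(-)^{gp}$. So the goal reduces to showing, for each motivic space $\cX$, that (a) the derived unit $\eta_{\cX}: \cX \to (\cR^{\delta}[\cX]^{fib})^{gp}$ is an $\mathbf{H}^{\mathbb{A}^1}\cR$-equivalence, and (b) the target is $\mathbf{H}^{\mathbb{A}^1}\cR$-local.

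For (a), I would apply $\cR^{\delta}[-]$ to $\eta_{\cX}$ and use the triangle identity to factor the fibrant replacement $\cR^{\delta}[\cX] \to C := \cR^{\delta}[\cX]^{fib}$ as
\[\cR^{\delta}[\cX] \xrightarrow{\;\alpha\;} \cR^{\delta}[C^{gp}] \xrightarrow{\;\varepsilon\;} C,\]
with $\varepsilon$ the counit. By Proposition \ref{ShvEtgplike}, $\varepsilon$ identifies with the étale inclusion $\acute{E}t(C) \hookrightarrow C$, which admits a natural coalgebra retraction $\rho: C \to \acute{E}t(C)$ by Corollary \ref{counitretract} applied sectionwise. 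The composition $\varepsilon \circ \alpha$, being a coalgebra weak equivalence, is a quasi-isomorphism on underlying modules and hence an $\mathbf{H}^{\mathbb{A}^1}\cR$-equivalence; by two-out-of-three, $\alpha$ (equivalently $\cR^{\delta}[\eta_{\cX}]$) is an $\mathbf{H}^{\mathbb{A}^1}\cR$-equivalence if and only if $\varepsilon$ is.

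The main obstacle is then to show: for every motivically-fibrant presheaf of simplicial coalgebras $C$, the étale inclusion $\acute{E}t(C) \hookrightarrow C$ is an $\mathbf{H}^{\mathbb{A}^1}\cR$-homology isomorphism. The strategy is to combine the decomposition $C \cong \bigoplus_{\alpha} \overline{C_{\alpha}}$ into irreducible components (Lemma \ref{dirsumic}, with naturality from Lemma \ref{funic}, applied sectionwise) with a component-by-component analysis. Over an algebraically closed $\cR$ every irreducible coalgebra has unique simple subcoalgebra $\cong \cR$, so $\acute{E}t(\overline{C_{\alpha}}) \cong \cR$, and one must then argue that motivic fibrancy forces each irreducible factor to be $\mathbf{H}^{\mathbb{A}^1}\cR$-acyclic relative to its one-dimensional étale part. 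This is the motivic analogue of Goerss's argument in \cite{MR1363853}; the hard part will be translating his pointwise simplicial reasoning into arguments compatible with Nisnevich descent and $\mathbb{A}^1$-invariance, most likely by passing to stalks at geometric points (where Goerss's classical theorem applies) and checking that the fibrant-replacement operation interacts well with this decomposition.

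For (b), I would use the standard mapping-space characterization of locality: given any $\mathbf{H}^{\mathbb{A}^1}\cR$-equivalence $\cY \to \cZ$, adjunction converts the induced map on mapping spaces with target $(\cR^{\delta}[\cX]^{fib})^{gp}$ into a map between mapping spaces out of $\cR^{\delta}[\cY], \cR^{\delta}[\cZ]$ into the fibrant coalgebra $\cR^{\delta}[\cX]^{fib}$. Since $\cR^{\delta}[-]$ carries $\mathbf{H}^{\mathbb{A}^1}\cR$-equivalences of motivic spaces to coalgebra weak equivalences (its effect on underlying modules is the free-module functor, whose homology computes $\mathbf{H}^{\mathbb{A}^1}_{*}(-,\cR)$) and $\mathrm{Map}$ into a fibrant coalgebra preserves weak equivalences by Theorem \ref{msCoalgebras}, the required equivalence of mapping spaces follows. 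Combining (a) and (b) completes the proof.
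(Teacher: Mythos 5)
Your overall architecture (reduce fully-faithfulness to the derived unit statement; split into (a) the unit is an $\mathbf{H}^{\mathbb{A}^1}\cR$-equivalence and (b) the target is local; prove (b) by the mapping-space/adjunction argument, which is just an unwinding of the fact that $(-)^{gp}$ is right Quillen into the localized structure) matches the paper. The genuine gap is in your treatment of (a), which is the heart of the theorem. You correctly reduce, via the triangle identity and Proposition \ref{ShvEtgplike}, to the assertion that the \'etale inclusion $\acute{E}t(C)\hookrightarrow C$ is an $\mathbf{H}^{\mathbb{A}^1}\cR$-isomorphism, but you then assert this ``for every motivically-fibrant presheaf of simplicial coalgebras $C$'' and propose to prove it by decomposing into irreducible components and arguing that ``motivic fibrancy forces each irreducible factor to be acyclic relative to its one-dimensional \'etale part,'' with a stalkwise appeal to Goerss's classical theorem. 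No mechanism is offered for why fibrancy would force this, and the statement in that generality should be false: the derived functor is fully faithful but not essentially surjective, so there exist fibrant presheaves of simplicial coalgebras that are not weakly equivalent to their \'etale part (already classically, a constant simplicial coalgebra on a nontrivial irreducible coalgebra has homology strictly larger than its \'etale part). Moreover, passing to stalks does not interact in any controlled way with the left-induced fibrant replacement or with $\mathbb{A}^1$-locality, so ``translate Goerss pointwise'' is not an argument but a restatement of the difficulty.

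What actually closes (a) — and is the paper's proof — is the retract argument you mention but never deploy. The fibrant replacement map $\alpha:\cR^{\delta}[\cX]\rightarrow C$ is, by definition of the left-induced structure, a weak equivalence of underlying presheaves of simplicial modules. Since the inclusion $\acute{E}t(-)\hookrightarrow(-)$ admits a natural coalgebra retraction (Theorem \ref{etalesplit}, Corollary \ref{counitretract}), $\acute{E}t(\alpha)$ is a retract of $\alpha$ and hence itself a weak equivalence; combined with $\acute{E}t(\cR^{\delta}[\cX])=\cR^{\delta}[\cX]$ and $\acute{E}t(C)\cong\cR^{\delta}[C^{gp}]$ (Proposition \ref{ShvEtgplike}), this says exactly that $\cR^{\delta}[\eta_{\cX}]$ is a weak equivalence, i.e.\ $\eta_{\cX}$ is an $\mathbf{H}^{\mathbb{A}^1}\cR$-equivalence — and then, if you want it, $\acute{E}t(C)\hookrightarrow C$ is a weak equivalence by two-out-of-three, rather than being an input you must establish by hand. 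No component-by-component analysis, no stalkwise reduction, and no special consequence of fibrancy is needed beyond the fact that $\alpha$ is an underlying weak equivalence; the only place where fibrancy of $C$ enters is in (b). As written, your proposal leaves the decisive step unproved, and the route you sketch for it would not go through.
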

\begin{proof}
First let us show that $\cR^\delta[-]:\mathrm{L}_{\mathbf{H}^{\mathbb{A}^1}\cR}\mathrm{L}_{\mathrm{mot}}s\Psh(Sm_F)\rightarrow \scoCAlg$ is a left Quillen functor. $\cR^{\delta}[-]$ preserves cofibrations because the cofibrations in $\mathrm{L}_{\mathbf{H}^{\mathbb{A}^1}\cR}\mathrm{L}_{\mathrm{mot}}s\Psh(Sm_F)$ are the same than cofibrations in $\mathrm{L}_{\mathrm{mot}}s\Psh(Sm_F)$. It also preservers trivial cofibrations by the definition of $\mathbf{H}^{\mathbb{A}^1}\cR$-weak equivalences (resp. $\mathbf{H}\cR$, $\mathbf{H}_{\tau}\cR$). Thus $(-)^{gp}$ is a right Quillen functor in particular preserves fibrations and trivial fibrations.

Let us show now that given $\cX$ a motivic space (respectively $inj-fib$, $\tau$-fib), the derived unit $\cX\rightarrow(\cR^{\delta}[\cX]^{fib})^{gp}$ exhibits the target as a the $\mathbf{H}\cR$-Bousfield Localization. 

\it{Claim:}  \rm $(-)^{gp}$ sends injective (resp. $\tau$, $\mathbb{A}^{1}$-$\tau$) weak equivalences to $\mathbf{H}^{\mathbb{A}^1}_{\tau}\cR$ (resp. $\mathbf{H}\cR$, $\mathbf{H}_{\tau}\cR$) weak equivalences. 

Granting this, it follows immediately  that given $\cX\in s\Psh(Sm_F)$ 
\[\xymatrix{\cX=(\cR^{\delta}[\cX])^{gp}\rightarrow (\cR[\cX]^{fib})^{gp}&}  \mbox{(resp. $\tau$-fib, $\tau$-$\mathbb{A}^1$-fib)}\]   is an $\mathbf{H}\cR$- injective (resp. $\tau$, $\mathbb{A}^{1}$-$\tau$) weak equivalences in $s\Psh(Sm_F)$.

It remains to show that $(\cR^{\delta}[\cX]^{\mathbb{A}^1\mbox{-\footnotesize{fib}}})^{gp}$ is an $\mathbf{H}\cR$-local space. We factor the counit map $\epsilon:\cR[\cX]\rightarrow \cR$ by a trivial-cofibration followed by fibration $\cR[\cX]\rightarrow \cR[\cX]^{fib}\rightarrow \cR$. Applying the group-like functor $(-)^{gp}$ we get

\[\xymatrix{\cX \ar[r]&(\cR[\cX]^{fib})^{gp}\ar[r]& {*}&} \mbox{(resp. $\tau$-fib, $\tau$-$\mathbb{A}^1$-fib)}.\]
We already show that the first map is an $\mathbf{H}\cR$-injective (resp. $\mathbf{H}_{\tau}\cR$, $\mathbf{H}^{\mathbb{A}^1}_{\tau}\cR$ ) weak equivalence. Furthermore, since  $(-)^{gp}$ is a right Quillen functor, it preserves fibrations. Then $(\cR[\cX]^{fib})^{gp}$ is $\mathbf{H}\cR$ (resp. $\mathbf{H}_{\tau}\cR$, $\mathbf{H}^{\mathbb{A}^1}_{\tau}\cR$) local space.

\it{Proof of the claim:}  \rm Let $\alpha: C\rightarrow D$ be an injective (resp. $\tau$, $\mathbb{A}^{1}$-$\tau$) weak equivalence in $\scoCAlg$. By definition it suffices to show that $\cR[C^{gp}]\rightarrow \cR[D^{gp}]$ is a an injective (resp. $\tau$, $\mathbb{A}^{1}$-$\tau$) weak equivalence.    Since  every element in $\scoCAlg$ is an objectwise  coalgebra, by the Theorem \ref{etalesplit} $\acute{E}t$ induces a functor in $\scoCAlg$, which has a natural splitting. This implies that $\acute{E}t(\alpha)$ is a retract of $\alpha$. Then $\acute{E}t(\alpha)$ is an injective (resp. $\tau$, $\mathbb{A}^{1}$-$\tau$) weak equivalence in $\scoCAlg$.
\[\xymatrix{\acute{E}t(C)\ar[r]\ar[d]_{\acute{E}t(\alpha)}&C\ar[d]^{\alpha}\\
                 \acute{E}t(D)\ar[r]&D}\] Since we are assuming $\cR$ to be an algebraically closed field by Proposition \ref{ShvEtgplike} 
 \[\xymatrix{\cR^{\delta}[(C)^{gp}]\cong \acute{E}t(C)\ar[r]^-{\acute{E}t(\alpha)}&\acute{E}t(D)\cong\cR^{\delta}[(D)^{gp}]}.\] Then   $\cR[C^{gp}]\rightarrow \cR[D^{gp}]$ is an injective (resp. $\tau$, $\mathbb{A}^{1}$-$\tau$) weak equivalence.

\end{proof}

\begin{rmk}

Note that it was enough to show that $\cR[C^{gp}]\rightarrow \cR[(C^{fib})^{gp}]$ is an injective (resp. $\tau$, $\mathbb{A}^{1}$-$\tau$) weak equivalence. In general it is hard to give an explicit fibrant replacement for coalgebras. At least up to our knowledge, we could not construct an explicit one. But since we have a good knowledge of the category of coalgebras over an algebraically closed field $k$, $\coCAlg_\cR$, Proposition \ref{Etgplike} allows us to show that $(-)^{gp}$ sends  injective (resp. $\tau$, $\mathbb{A}^{1}$-$\tau$) weak equivalences between coalgebras to $\mathbf{H}^{\mathbb{A}^1}_{\tau}\cR$ (resp. $\mathbf{H}\cR$, $\mathbf{H}_{\tau}\cR$) weak equivalences.

\end{rmk}

\section{Discrete $G$-objects and $\mathbb{A}^1$-Goerss theorem for $k$ non-algebraically closed}\label{gmot}
Let $G$ be a profinite group. In this section we will define the notion of discrete $G$-objects for the categories of abelian groups, sheaves of sets, sheaves of abelian groups, simplicial sets $\sS$ and simplicial sheaves. The aim is to understand the notion of discret $G$-motivic spaces. 

\subsection{Discrete $G$-Spaces}

Let us take the category of simplicial discrete $G$-$\Sets$

\begin{equation}
sG\mbox{-}\Sets_{d}\simeq s\Sh(G\mbox{-}\Sets_{fd})
\end{equation}

\begin{definition}\label{G-dismod}
Let $f:F\rightarrow G$ be a morphism between simplicial discrete $G$-sets:
\begin{itemize}
\item $f$ is a weak equivalence if and only if it is a weak equivalence between the underlying simplicial sets.
\item  $f$ is a cofibration if and only if it is a injection.
\item $f$ is a fibration if and only if it has the right lifting property with respect to all trivial cofibrations.
\end{itemize} 
\end{definition}

\begin{proposition}
The class of weak equivalence, fibrations and cofibrations from Definition \ref{G-dismod}  defines a simplicial model model structure in $sG\mbox{-}\Sets_{d}$.
\end{proposition}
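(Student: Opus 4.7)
The plan is to identify this model structure with Jardine's local injective model structure on the category of simplicial sheaves over the site $G\text{-}\Sets_{fd}$ and then appeal to Theorem~\ref{Jardinemodel}. First I would invoke the equivalence of categories $sG\text{-}\Sets_d \simeq s\Sh(G\text{-}\Sets_{fd})$ already established in the text. The key observation is that the site $G\text{-}\Sets_{fd}$ has enough points: the forgetful functor $u^{*}:G\text{-}\Sets_d\to \Sets$ is exact and faithful (as noted after the definition of discrete $G$-sets), and in fact every discrete $G$-set decomposes as $\bigcup_{H} X^H$ with $H$ running through open subgroups of finite index, so the stalk functor at the unique geometric point is exactly $u^{*}$.

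Given this, for a morphism $f:F\to G$ in $sG\text{-}\Sets_d$, the notion of local weak equivalence (Definition~\ref{stalkweak}) collapses to ``$u^{*}(f)$ is a weak equivalence of simplicial sets'', which is exactly our definition of weak equivalence in Definition~\ref{G-dismod}. Next I would check that monomorphisms in $s\Sh(G\text{-}\Sets_{fd})$ coincide with injections: since $u^{*}$ is faithful and exact, a map is a monomorphism in the topos if and only if its underlying map of simplicial sets is an injection. Hence cofibrations in Jardine's structure match Definition~\ref{G-dismod}, and fibrations must then agree as they are characterized by the right-lifting property. Applying Theorem~\ref{Jardinemodel}, we obtain a cofibrantly generated model category structure on $sG\text{-}\Sets_d$ with precisely our three classes.

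It remains to check the simplicial axiom. The simplicial enrichment is the canonical one: for $K\in\sS$ and $X\in sG\text{-}\Sets_d$, set $K\otimes X:=K\times X$ endowed with the $G$-action through the second factor (trivial on $K$), and $\Hom(X,Y)_n=\Hom_{sG\text{-}\Sets_d}(\Delta^n\otimes X,Y)$, with the cotensor $Y^K$ given by $G$ acting through $Y$. With this setup, the pushout-product axiom SM7 reduces, via $u^{*}$, to the pushout-product axiom for Kan simplicial sets, because both cofibrations and weak equivalences in $sG\text{-}\Sets_d$ are detected by the underlying simplicial set (noting that $u^{*}(K\otimes X)=K\times u^{*}(X)$ and $u^{*}$ preserves colimits).

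The main obstacle is really just careful bookkeeping: verifying that the generating (trivial) cofibrations from Jardine's construction can be chosen to be cardinality-bounded inside $sG\text{-}\Sets_d$, and that the simplicial tensor with $\Delta^n$ preserves the subcategory of discrete (as opposed to merely continuous) $G$-sets. Both follow from the fact that an action on $K\otimes X$ on a simplex $(\sigma,x)\in K_n\times X_n$ factors through the isotropy $H_x\subset G$, which is open; hence $K\otimes X$ remains a discrete $G$-set in each simplicial degree, and the cardinality bounds in Jardine's argument transfer verbatim.
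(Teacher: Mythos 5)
Your proposal is correct and is exactly the route the paper itself sketches: the paper cites Goerss for the main proof and then notes the alternative of identifying $sG\mbox{-}\Sets_d$ with $s\Sh(G\mbox{-}\Sets_{fd})$ and applying Jardine's theorem, using that the topos has a single conservative point given by the forgetful functor. You have simply filled in the details (matching of weak equivalences, cofibrations, and the simplicial axiom via $u^{*}$) that the paper delegates to the references, and these all check out.
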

\begin{proof}
This is proved in \cite{MR1320993}. Since the category of simplicial discrete $G$-sets is the category of simplicial sheaves $s\Sh(G\mbox{-}\Sets_{fd})$, an alternative proof follows from \cite{MR906403}. Recall that there is only one stalk, the forgetful functor.
\end{proof}

\begin{definition}
Let $sG\mbox{-}\Sets_{d}$ be the category of simplicial discrete $G$-spaces endowed with the Jardine model structure. Consider its homotopy category $\mathrm{Ho}((sG\mbox{-}\Sets_{d})$ an element in the homotopy category is called a discrete $G$-space.
\end{definition}

\subsection{Discrete $G$-Motivic spaces}

Let us consider the category of simplicial presheaves on the product site $G\mbox{-}\Sets_{fd}\times Sm_F$, where $Sm_F$ is endowed with a Grothendieck topology $\tau$ and $G\mbox{-}\Sets_{fd}$ is endowed with the cofinite topology, by abuse of notation we will denote the cofinite topology for $G$ a profinte group as $G$

By \cite{MR906403}, the category $s\Psh( Sm_F\times G\mbox{-}\Sets_{fd})$ is endowed with injective model structure. Note that, if we consider the trivial Grothendieck topology in $Sm_F$, the fibrant objects in the $triv\times G$-$inj$ model structure are elements in $s\Psh(Sm_F, sG\mbox{-}\Sets_{d})$.

Furthermore, as in the definition of discrete $G$-spaces, we can define a discrete $G$-simplicial sheaf over the site $(Sm_F,\tau)$ as an element in the homotopy category:

As usual we  introduce  the $\mathbb{A}^1$-localization in the homotopy category

\[\mathrm{Ho}((\mbox{L}_{\tau\times G}s\Psh(Sm_F\times G\mbox{-}\Sets_{fd}))\]

\begin{definition}
A presheaf of simplicial discrete $G$-sets $\cX\in \Psh(Sm_F,sG\mbox{-}\Sets_{fd})$ is $\mathbb{A}^1$-local if for any $X\in Sm_F$ the induced 
\[\mapGd(h_U,\cX)\rightarrow \mapGd(h_{U\times \mathbb{A}^1},\cX)\]
is a weak equivalence, where $h_U$ and $h_{U\times\mathbb{A}^1}$ are consider  as presheaves with the trivial action by $G$. 
\end{definition}

\begin{definition}
Discrete $G$-motivic spaces are fibrant objects in the model category 
\[\mbox{L}_{\mathbb{A}^1}L_{\tau\times G}\Psh(Sm_F,sG\mbox{-}\Sets_{fd})\].
\end{definition}

\begin{definition}
Let $(\cC,\tau)$ be an small Grothendieck site. A discrete $G$-sheaf  $F$ over $(C,\tau)$ is a sheaf over the site $F\acute{E}t/\cR\times(\cC,\tau)$.
\end{definition}

\begin{definition}
Let $G=Gal(k_{sep}/k)$ be the absolute Galois group of a field $k$ and $F$  a perfect field, consider $(Sm_F,\tau)$ with $\tau$ a Grothendieck topology over $Sm_F$. We define the category of $G$-discrete simplicial $\tau$-local presheaves as the category $s\Psh(F\acute{E}t/\cR\times (Sm_F,\tau))$ endowed with the  $\acute{e}t\times\tau$-injective model structure. We denote this category as $s\Psh(Sm_F)_{\tau\times G-inj}$.
\end{definition}

\begin{definition}
Let $G=Gal(k_{sep}/k)$ be the absolute Galois group of a field $k$ and $F$  a perfect field, consider $(Sm_F,\tau)$ with $\tau$ a Grothendieck topology over $Sm_F$. Let $S$ be a collection of morphisms in $\mathrm{Ho}((s\Psh(Sm_F)_{\tau-inj})$. We will say that a morphism $f:\cX\rightarrow \cY\in s\Psh(Sm_F)_{\tau\times G-inj}$ is an $S$-local equivalence if the morphism at the stalk $\cX_{\Spec(\cR_{sep})}\rightarrow \cY_{Spec({\cR}_{sep})}$ is an $S$-local weak equivalence in $s\Psh(Sm_F)_{\tau-inj}$. 
\end{definition}

\begin{example}
We are particular interested in the following two examples.
\begin{enumerate}
\item Let $S:=\{X\times \mathbb{A}^1\rightarrow X\,\,:\,\, X\in Sm_F\}$, then a morphism $f:\cX\rightarrow\cY$ between $G$-discrete spaces is $\mathbb{A}^1$-equivalence if $F_{\Spec(\bar{\cR})}:\cX_{\Spec(\bar{\cR})}\rightarrow \cY_{\Spec(\bar{\cR})}$ is an $\mathbb{A}^1$-equivalence.

 \item Let $E$ be a motivic homology theory, a morphism $\cX\rightarrow \cY$ between $G$-discrete spaces is be an $E$-local weak equivalence if the morphism on the stalk $F_{\Spec(\bar{\cR})}\rightarrow G_{\Spec(\bar{\cR})}$ is a $E$-local weak equivalence, \it{i.e} \rm the induced morphism $E_{*}(\cX_{\Spec(\bar{\cR})})\to E_{*}(\cY_{\Spec(\bar{\cR})})$ is an isomorphism.  

 \end{enumerate}
\end{example}

\begin{rmk}
The $S$-local fibrant replacement in the category  $s\Psh(Sm_F)_{\tau-inj\times G}$ is more complicated. Already in the category of discrete spaces $G-\cS_d$ the Eilenberg-Mac Lane space $K(\mathbb{F}_p,n)$ is not fibrant object in $\sSGd$ for $G=Gal(\bar{\mathbb{F}}_p/\mathbb{F}_p)$, although is a fibrant object in $L_{H\mathbb{F}_p}\cS$ (see \cite[Example 7.3]{MR1320993}). This is a consequence of the fact that  $Gal(\bar{\mathbb{F}}_p/\mathbb{F}_p)\simeq \hat{\mathbb{Z}}$ is of cohomological dimension 1. 
\end{rmk}

\subsection{Homotopy Fixed points for discrete $G$-Motivic Spaces}
Here we define and discuss elementary properties of homotopy fixed points for discrete $G$-motivic spaces, this depends in the properties of discrete $G$-spaces. 

\rm Let $G=Gal(k_{sep}/k)$ be the absolute Galois group of a field $k$ and $F$  a perfect field, consider $(Sm_F,\tau)$ with $\tau$ a Grothendieck topology over $Sm_F$. We have a canonical functor given by the constant sheaf, in other words we can endowed every simplicial presheaf over $Sm_F$ with the trivial action.  
\begin{equation}\label{fixed}
\xymatrix{s\Psh(Sm_F)_{\tau-inj}\ar@<0.5ex>[rr]^-{constant}&&s\Psh( Sm_F\times G\mbox{-}\Sets_{fd})_{G\times\tau-inj}
.\ar@<0.5ex>[ll]^-{(-)^G}}
\end{equation}

The right adjoint is given by the sections $\cX(\Spec(\cR))$ in other words; it is given by the fixed points $\cX^G$. 
Since the cofibrations in  $s\Psh( Sm_F\times G\mbox{-}\Sets_{fd})_{G\times\tau-inj}$ are section-wise and the constant sheaf functor preserves weak equivalences, the adjunction \ref{fixed} induces a Quillen adjunction. We have a well-defined adjoint pair on the homotopy categories
\begin{equation}
\xymatrix{\mathrm{Ho}(s\Psh(Sm_F)_{\tau-inj})\ar@<0.5ex>[rr]^-{constant}&&\mathrm{Ho}(s\Psh( Sm_F\times G\mbox{-}\Sets_{fd}))
.\ar@<0.5ex>[ll]^-{(-)^G}}
\end{equation}

\begin{definition}
Let $\cX\in \mathrm{Ho}(s\Psh( Sm_F\times G\mbox{-}\Sets_{fd}))$ be a $G$-discrete object. Define the homotopy fixed objects as:
\[\cX^{hG}:=(\cX^{fib})^{G}\] where the fibrant replacement is taken in the category $s\Psh( Sm_F\times G\mbox{-}\Sets_{fd})_{\tau\times G\mbox{-}inj}$.
\end{definition}

\subsection{$\mathbb{A}^1$-Goerss Theorem for $\cR$ non-algebraically closed}

\begin{proposition}\label{adjG}
There is a left adjoint functor $\bar{\cR}^\vee[-]_G:\Psh(Sm_F,\sSGd)\rightarrow s\coCAlg_\cR(Sm_F)$ which sends $G/H$ to the constant presheaf of coalgebras  $(\bar{k}^H)^{\vee}$ placed in simplicial degree zero.      

\end{proposition}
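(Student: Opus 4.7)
The approach is to bootstrap the adjunction from Proposition \ref{final} to presheaves of simplicial objects by two formal operations: degreewise extension from sets to simplicial sets, and sectionwise extension over $Sm_F$. Both operations preserve adjunctions, so the main work is just to assemble the pieces and verify that the claimed value on $G/H$ comes out correctly.

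First, I would extend the adjunction
\[
\bar{\cR}^\vee[-]_G : G\mbox{-}\Sets_d \rightleftarrows \coCAlg_\cR : R
\]
of Proposition \ref{final} degreewise to an adjunction $\bar{\cR}^\vee[-]_G : \sSGd \rightleftarrows s\coCAlg_\cR : R$, defined by $(\bar{\cR}^\vee[X_\bullet]_G)_n := \bar{\cR}^\vee[X_n]_G$ and $(R C_\bullet)_n := R(C_n)$, with face and degeneracy operators obtained by functoriality. The unit and counit are applied degreewise, and the triangle identities are inherited from Proposition \ref{final}.

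Second, I would apply the analogous construction sectionwise over $Sm_F$. For $F \in \Psh(Sm_F, \sSGd)$ set $\bar{\cR}^\vee[F]_G(U) := \bar{\cR}^\vee[F(U)]_G$, and for $C \in s\coCAlg_\cR(Sm_F)$ set $R(C)(U) := R(C(U))$; naturality in $U$ follows from functoriality of the degreewise functors. The adjunction isomorphism
\[
\Hom(\bar{\cR}^\vee[F]_G, C) \;\cong\; \Hom(F, R(C))
\]
is then assembled from the natural bijections on sections.

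To verify the value on representables, observe that by Proposition \ref{final} we have $\bar{\cR}^\vee[G/H]_G = (\bar{\cR}^H)^\vee$ in $\coCAlg_\cR$. Viewing $G/H$ as the constant presheaf of simplicial discrete $G$-sets concentrated in degree zero, the degreewise and sectionwise construction yields the constant presheaf of simplicial coalgebras with value $(\bar{\cR}^H)^\vee$ concentrated in degree zero, as required. I do not expect a serious obstacle: the nontrivial content sits in Proposition \ref{final}, and the remaining checks are formal, relying on the fact that $\bar{\cR}^\vee[-]_G$ preserves colimits (being a left adjoint) and that both degreewise and sectionwise extension are compatible with the formation of adjunctions. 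The only minor subtlety worth spelling out is that, despite the category $\sSGd$ being defined via sheaves on a site with covers, the adjunction of Proposition \ref{final} is an adjunction with the category of \emph{presheaves} on $\mathrm{Orb}(G)$ (through the equivalence with discrete $G$-sets), so no sheafification issue arises when propagating the adjunction to the presheaf level on $Sm_F$.
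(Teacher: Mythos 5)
Your proposal is correct and follows essentially the same route as the paper, which simply defines $\bar{\cR}^\vee[\cX]_G(U):=\bar{\cR}^\vee[\cX(U)]_G$ sectionwise (and implicitly degreewise) from Proposition \ref{final}. You spell out the formal adjunction-assembly and the value on representables in more detail than the paper does, but there is no difference in substance.
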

\it{Notation:}   \rm  Here the dual $\cR$-vector spaces  $(L)^{\vee}$ is regarded as coalgebra with the comultiplication induced by the dual of the multiplication map.

\begin{proof}
Let $\cX\in\Psh(Sm_F,\sSGd)$ for every $U\in Sm_F$, from \ref{final}  we define the presheaf of coalgebras  $\bar{\cR}^\vee[\cX]_G$ section-wise.  
\[\bar{\cR}^\vee[\cX]_G(U):=\bar{\cR}^\vee[\cX(U)]_G.\] 
\end{proof}

\begin{proposition}\label{GadjGoerss}
Let $k$ be a perfect field. Endow the category $s\Psh(Sm_F,\sSGd)$ with the injective (resp.$ G\times\tau$-local and $G\times\tau$-$\mathbb{A}^1$-local) model structure and $s\coCAlg_\cR$ with the  injective (resp.$\tau$-local and $\tau$-$\mathbb{A}^1$-local) from \ref{msCoalgebras}.
Then the adjunction 
\[\bar{\cR}^\vee[-]_G:s\Psh(Sm_F,\sSGd)\rightleftarrows s\coCAlg_k:R\] is a Quillen adjunction.
\end{proposition}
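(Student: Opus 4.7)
The plan is to mimic the strategy of Proposition~\ref{adjGoerss}. In all three cases under consideration, the target model structure on $s\coCAlg_\cR(Sm_F)$ is left-induced from the corresponding (injective, $\tau$-local, or $\tau$-$\mathbb{A}^1$-local) model structure on $s\Mod_\cR(Sm_F)$ via the forgetful functor $U$ (Theorem~\ref{msCoalgebras}). Since cofibrations and weak equivalences in the target are detected by $U$, it suffices to show that $U\circ \bar{\cR}^{\vee}[-]_G$ is a left Quillen functor into $s\Mod_\cR(Sm_F)$ with the appropriate model structure. Cofibrations in each of the three model structures on $s\Psh(Sm_F,\sSGd)$ are monomorphisms, so cofibration preservation reduces to the following observation: given a monomorphism $\cX \hookrightarrow \cY$ of simplicial presheaves of discrete $G$-sets, there is a $G$-equivariant, section-wise and level-wise decomposition $\cY \cong \cX \sqcup (\cY \setminus \cX)$; since $\bar{\cR}^{\vee}[-]_G$ converts coproducts of discrete $G$-sets into coproducts of coalgebras, which are direct sums on underlying $\cR$-modules, the induced map is a split injection.

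The key input for weak equivalence preservation is a natural isomorphism of simplicial presheaves of $\bar{\cR}$-modules
\[
U\,\bar{\cR}^{\vee}[\cX]_G \otimes_\cR \bar{\cR} \;\cong\; \bar{\cR}[\cX],
\]
where $\bar{\cR}[\cX]$ denotes the free $\bar{\cR}$-module on the underlying simplicial presheaf of sets of $\cX$. Both functors $X \mapsto U\,\bar{\cR}^{\vee}[X]_G \otimes_\cR \bar{\cR}$ and $X \mapsto \bar{\cR}[X]$ from discrete $G$-sets to $\bar{\cR}$-modules are colimit-preserving, so the isomorphism reduces to the single-orbit case $X = G/H$, where it is the classical Galois-theoretic identification $\bar{\cR}^H \otimes_\cR \bar{\cR} \cong \prod_{G/H}\bar{\cR} = \bar{\cR}[G/H]$ coming from the bijection between $\cR$-algebra embeddings $\bar{\cR}^H \hookrightarrow \bar{\cR}$ and cosets $G/H$.

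Granting this, let $f\colon \cX \to \cY$ be a weak equivalence in the injective or $\tau$-local model structure on $s\Psh(Sm_F,\sSGd)$. By the definitions recorded earlier, the underlying map of simplicial presheaves of sets is section-wise, respectively $\tau$-stalk-wise, a weak equivalence. The free-module functor $\bar{\cR}[-]$ sends such maps to weak equivalences of simplicial $\bar{\cR}$-modules, and faithful flatness of $\bar{\cR}$ over $\cR$ transports this back to a weak equivalence of simplicial $\cR$-modules; hence $U\,\bar{\cR}^{\vee}[f]_G$ is a weak equivalence. This settles the injective and $\tau$-local cases. For the $\tau$-$\mathbb{A}^1$-local case, observe that the relevant model structures on both sides are left Bousfield localizations of the $\tau$-local ones at the $\mathbb{A}^1$-projections $h_{U \times \mathbb{A}^1} \to h_U$, where $h_U$ carries the trivial $G$-action. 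For a trivially acting $G$-set the only stabilizer is $G$ and $(\bar{\cR}^G)^{\vee} = \cR$, so $\bar{\cR}^{\vee}[-]_G$ restricts on such presheaves to $\cR^{\delta}[-]$. Consequently $\bar{\cR}^{\vee}[-]_G$ sends the generating $\mathbb{A}^1$-projections to $\cR^{\delta}[h_{U \times \mathbb{A}^1}] \to \cR^{\delta}[h_U]$, which are $\mathbb{A}^1$-weak equivalences in $s\coCAlg_\cR(Sm_F)$ by Proposition~\ref{adjGoerss}; the universal property of left Bousfield localization then upgrades the $\tau$-local Quillen adjunction to the $\mathbb{A}^1$-localized one.

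The main obstacle is the construction and naturality of the base-change isomorphism $U\,\bar{\cR}^{\vee}[-]_G \otimes_\cR \bar{\cR} \simeq \bar{\cR}[-]$; everything else is a formal consequence of Proposition~\ref{adjGoerss}, faithful flatness of $\bar{\cR}/\cR$, and the standard machinery of left Bousfield localizations.
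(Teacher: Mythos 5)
Your proof is correct, and it shares the paper's skeleton: reduce, via the left-induced model structures of Theorem \ref{msCoalgebras}, to showing that $U\circ\bar{\cR}^{\vee}[-]_G$ is a left Quillen functor into $s\Mod_\cR(Sm_F)$. Where you diverge is in the key mechanism. The paper's proof simply notes that for a discrete $G$-set $X$ the underlying vector space of $\bar{\cR}^{\vee}[X]_G$ is isomorphic to $\cR[X]$ and then invokes Proposition \ref{free-forget}, which covers the injective, $\tau$-local and $\tau$-$\mathbb{A}^1$-local cases in one stroke; this is more economical, but the isomorphism with $\cR[X]$ is not obviously natural in $X$, and some functorial control is exactly what preservation of weak equivalences requires. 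Your route supplies that control: the natural base-change isomorphism $U\bar{\cR}^{\vee}[-]_G\otimes_\cR\bar{\cR}\cong\bar{\cR}[-]$, checked on orbits through $\bar{\cR}^{H}\otimes_\cR\bar{\cR}\cong\prod_{G/H}\bar{\cR}$ and extended by colimit preservation, combined with faithful flatness of $\bar{\cR}/\cR$ to detect sectionwise and stalkwise homology isomorphisms, makes the weak-equivalence preservation argument airtight. Your cofibration argument (equivariant orbit decomposition giving a split injection of underlying modules) and your separate treatment of the $\mathbb{A}^1$-case via the universal property of left Bousfield localization, using that $\bar{\cR}^{\vee}[-]_G$ restricts to $\cR^{\delta}[-]$ on trivial-$G$-action presheaves so the generating projections $h_{U\times\mathbb{A}^1}\to h_U$ are sent to $\mathbb{A}^1$-weak equivalences (Proposition \ref{adjGoerss}), are likewise sound, whereas the paper absorbs both points into the single citation of Proposition \ref{free-forget}. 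In short: both arguments reach the same conclusion, the paper's being shorter and yours being more careful about the naturality that the paper's comparison with $\cR[X]$ leaves implicit.
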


\begin{proof}

It is enough to show that $\bar{\cR}^\vee[-]_G$ preserves cofibrations and acyclic cofibrations. Since the model structure in  $\scoCAlg$ is left induced by the forgetful functor $U:\scoCAlg\rightarrow \sMod(Sm_F)$, where we endow the category  $\sMod(Sm_F)$ with the injective (resp. $\tau$-local and $\tau$-$\mathbb{A}^1$-local) model structure, it is enough to show that $\bar{\cR}^\vee[-]_G\circ U$ preserves cofibrations and acyclic cofibrations. Note that  for $X$ a discrete $G$-Set the underlying vector space $\bar{\cR}^\vee[X]_G\circ U$ is isomorphic to $k[X]$. Then the proposition follows from proposition \ref{free-forget}.


\end{proof}


\begin{theorem}\label{G-Goerss}

The functor $\bar{\cR}^\vee[-]_G:\mathrm{L}_{\mathbb{A}^1}\mathrm{L}_{\tau}s\Psh(Sm_F,\sSGd)\rightarrow s\coCAlg_k$ sends $\mathbf{H}_{\tau}^{\mathbb{A}^1}\cR$-equivalences to $\mathbb{A}^1$-$Nis$-weak equivalences of coalgebras and thus induces a functor:
\[\mathrm{L}\bar\cR^{\vee}[-]_G:\mbox{\rm L}_{\mathbf{H}^{\mathbb{A}^1}\cR}\mathrm{Ho}((Spc^{G}_{\bullet}(F))\rightarrow \mathrm{Ho}((\mathrm{L}_{\mathbb{A}^1}\mathrm{L}_{\tau}s\coCAlg_\cR(\mathrm{Sm}_F)).\]
This functor is fully faithful.

Furthermore, for every motivic space $\cX$ the derived unit map 
\[\cX\rightarrow R((\bar\cR^{\vee}[\cX]_G)^{fib}) \] exhibits the target as the $\mathbf{H}^{\mathbb{A}^1}\cR$ localization of $\cX$ in discrete $G$-motivic spaces.

\end{theorem}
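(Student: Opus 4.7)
The approach follows the blueprint of Theorem \ref{GoerssThm}, with the Quillen pair $\bar{\cR}^\vee[-]_G \dashv R$ from Proposition \ref{GadjGoerss} taking the place of $\cR^\delta[-] \dashv (-)^{gp}$, and with Proposition \ref{final}'s formula $\bar{\cR}^\vee[RC]_G \cong \acute{E}t(C)$ replacing Proposition \ref{ShvEtgplike}. The crucial input is that Theorem \ref{etalesplit} holds for any perfect field $\cR$, supplying a natural coalgebra splitting of $\acute{E}t(C) \hookrightarrow C$, so the algebraic-closure hypothesis that was imposed in Theorem \ref{GoerssThm} can be dropped here.

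First I would verify that $\bar{\cR}^\vee[-]_G$ sends $\mathbf{H}^{\mathbb{A}^1}\cR$-equivalences of discrete $G$-motivic spaces to weak equivalences in the $\tau$-$\mathbb{A}^1$-left model structure on $\scoCAlg$. By construction these $G$-equivariant equivalences are tested on the geometric stalk at $\Spec(\bar{\cR})$, where the $G$-action is forgotten and, after base change to $\bar{\cR}$, the functor $\bar{\cR}^\vee[-]_G$ reduces to the diagonal coalgebra functor $\bar{\cR}^\delta[-]$ applied to the underlying motivic space. Since weak equivalences in $\scoCAlg$ are left-induced from those in $s\Mod_\cR(Sm_F)$ and $-\otimes_\cR \bar{\cR}$ is faithfully flat, the claim reduces to the homology computation of Section \ref{leftcoal} in the algebraically closed setting. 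This descends the Quillen adjunction to the localizations.

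The heart of the argument, mirroring Theorem \ref{GoerssThm}, is that $R$ sends weak equivalences of coalgebras to $\mathbf{H}^{\mathbb{A}^1}\cR$-equivalences of $G$-motivic spaces. Given $\alpha\colon C \to D$ a weak equivalence in $\scoCAlg$, Proposition \ref{final} identifies $\bar{\cR}^\vee[R\alpha]_G$ with $\acute{E}t(\alpha)\colon \acute{E}t(C) \to \acute{E}t(D)$. Theorem \ref{etalesplit}, applied sectionwise to the presheaf of coalgebras, exhibits $\acute{E}t$ as a natural retract of the identity functor, so $\acute{E}t(\alpha)$ is a retract of $\alpha$ in the arrow category. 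Since weak equivalences are closed under retracts, $\acute{E}t(\alpha)$ is a coalgebra weak equivalence, and combining with the first step yields that $R\alpha$ is an $\mathbf{H}^{\mathbb{A}^1}\cR$-equivalence.

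The remaining statements are now formal. For any $\cX \in Spc^{G}_{\bullet}(F)$, Proposition \ref{final} makes the unit $\cX \to R(\bar{\cR}^\vee[\cX]_G)$ an isomorphism; applying $R$ to a fibrant replacement $\bar{\cR}^\vee[\cX]_G \xrightarrow{\sim} \bar{\cR}^\vee[\cX]_G^{fib}$ produces an $\mathbf{H}^{\mathbb{A}^1}\cR$-equivalence by the previous paragraph, so the derived unit $\cX \to R(\bar{\cR}^\vee[\cX]_G^{fib})$ is an $\mathbf{H}^{\mathbb{A}^1}\cR$-equivalence. The target is $\mathbf{H}^{\mathbb{A}^1}\cR$-local because $R$, being right Quillen in the localized adjunction, carries fibrant coalgebras to fibrant, hence $\mathbf{H}^{\mathbb{A}^1}\cR$-local, discrete $G$-motivic spaces. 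Full faithfulness of $\mathrm{L}\bar{\cR}^\vee[-]_G$ is equivalent to this derived unit being an equivalence in the source localization, which is exactly what we have shown. The step I expect to be the main obstacle is the first one: making precise the identification of $\bar{\cR}^\vee[-]_G$-equivalences with stalkwise $\mathbf{H}^{\mathbb{A}^1}\cR$-equivalences, since this requires simultaneously handling base change to $\bar{\cR}$, the Galois action encoded by $\bar{\cR}^\vee$, and the left-induced model structure on coalgebras.
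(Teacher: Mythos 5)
Your proposal is correct and follows essentially the same route as the paper: the Quillen adjunction of Proposition \ref{GadjGoerss}, the identification $\bar{\cR}^{\vee}[RC]_G\cong \acute{E}t(C)$ from Proposition \ref{final}, the retract argument via the natural splitting of Theorem \ref{etalesplit} (valid over any perfect field), and the formal derived-unit/locality conclusion exactly as in Theorem \ref{GoerssThm}. The only divergence is the step you flag as the main obstacle: the paper dispenses with the base change to $\bar{\cR}$ and faithful flatness by simply observing (as in the proof of Proposition \ref{GadjGoerss}) that the underlying presheaf of $k$-vector spaces of $\bar{\cR}^{\vee}[\cX]_G$ is $k[\cX]$ on the stalk, so that sending $\mathbf{H}^{\mathbb{A}^1}\cR$-equivalences to left-induced weak equivalences of coalgebras is immediate from the stalkwise definition of these equivalences.
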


\begin{proof}
First let us show that $\bar{\cR}^\vee[-]_G:\mathrm{L}_{\mathbb{A}^1}\mathrm{L}_{\tau}s\Psh(Sm_F,\sSGd)\rightarrow s\coCAlg_k$ is a left Quillen functor. From Proposition \ref{adjG} we notice that the functor is given section-wise; $\bar{\cR}^\vee[-]_G$ preserves cofibrations because the cofibrations in $\mathrm{L}_{\mathbf{H}^{\mathbb{A}^1}\cR}\mathrm{L}_{\mathbb{A}^1}\mathrm{L}_{\tau}s\Psh(Sm_F,\sSGd)$ are the same than cofibrations in $\mathrm{L}_{\mathbb{A}^1}\mathrm{L}_{\tau}s\Psh(Sm_F,\sSGd)$. It also preserves trivial cofibrations, since by the proof in \ref{GadjGoerss} we know that $\bar{\cR}^\vee[\cX]_G\simeq k[\cX]$ as presheaves of vector spaces.Then it follows from the definition of $\mathbf{H}^{\mathbb{A}^1}\cR$-weak equivalences (resp. $\mathbf{H}\cR$, $\mathbf{H}_{\tau}\cR$). 
Thus $R$ is a right Quillen functor in particular preserves fibrations and trivial fibrations.
To show that the functor \[\mathrm{L}\bar\cR^{\vee}[-]_G:\mbox{\rm L}_{\mathbf{H}^{\mathbb{A}^1}\cR}\mathrm{Ho}((Spc^{G}_{\bullet}(F))\rightarrow \mathrm{Ho}((\mathrm{L}_{\mathbb{A}^1}\mathrm{L}_{\tau}s\coCAlg_\cR(\mathrm{Sm}_F))\] is fully faithful Quillen functor, we have to prove that  

\begin{equation}
 \cX\mapsto R((\bar\cR^{\vee}[\cX]_G))\mapsto R((\bar\cR^{\vee}[\cX]_G)^{fib})
 \end{equation} is a weak equivalence.

By Proposition \ref{final} the first morphism is an isomorphism, it remains to show that $R$ sends injective (resp. $\tau$, $\mathbb{A}^{1}$-$\tau$) weak equivalences to $\mathbf{H}^{\mathbb{A}^1}_{\tau}\cR$ (resp. $\mathbf{H}\cR$, $\mathbf{H}_{\tau}\cR$) weak equivalences. Again by \ref{final} the counit of the adjunction is given by $\acute{E}t(C)\rightarrow C$. Then the proof follows as in \ref{GoerssThm}

\end{proof}

\appendix
\section{Recollections on combinatorial model categories}
\subsection{Compactness, Presentability and Accessible Categories}

\begin{definition}
Let $\kappa$ be a regular cardinal and  $\cJ$  a $\kappa$-filtered partially order set.  Let $\cC$ be a category which admits small colimits and let $X$ be an object of $\cC$. Let $\{Y_{\alpha\in\cJ}\}$   be a diagram in $\cC$ indexed by $\cJ$.  Let $Y=\colim_{\alpha\in\cJ}Y_{\alpha}$ be  the colimit of this diagram. There is an  associated map of sets
\[\phi:\colim_{\alpha}\hom_\cC(X, Y_\alpha)\rightarrow \hom_\cC(X,Y)\]
We say that $X$ is $\kappa$-\it{compact} \rm if $\phi$ is a bijective map of sets for every $\kappa$-filtered partial order set $\cJ$ and every diagram $\{Y_{\alpha}\}$ indexed by $\cJ$. We say that $X$ is \it{small} \rm if it is $\kappa$-compact for some small regular cardinal $\kappa$. 
\end{definition}

\begin{definition}
A category $\cC$ is locally presentable if it satisfies the following conditions:
\begin{enumerate}
\item The category $\cC$ admits all small colimits.
\item There exists a small set $S$ of objects of $\cC$ which generates $\cC$ under colimits in other words every object of $C$ may be obtained as the colimit of small diagrams taking values in $S$. 
\item Every object in $\cC$ is small. This is equivalent to say that every object in $S$ is small.
\item For any pair of objects $X,Y\in \cC$ the set $\hom_{\cC}(X,Y)$ is small.  
\end{enumerate}
\end{definition}

\begin{rmk}\label{slimits}
A no-trivial consequence of presentability for a category $\cC$ is that it also admits all small limits.
\end{rmk}

\subsection{Model categories}
\begin{definition}
A \it model category \rm is a category $\cC$ which is equipped with three distinguished classes of morphisms in $\cC$, called \it cofibrations\rm,  \it fibrations \rm and \it weak equivalences, in which the following axioms are satisfied:
\begin{enumerate}
\item The category $\cC$ admits small limits and colimits.
\item Given a composable pair of maps $X\rightarrow Y\rightarrow Z$, if any two of $g\circ f$, $f$ and $g$ are weak equivalences, then so is the third.
\item Suppose $f\rightarrow Y$ is a retract of $g:X'\rightarrow Y' $, that is suppose that there exists a commutative diagram:
\[\xymatrix{ X\ar[r]^i \ar[d]^f \ar@/^1.1pc/[rr]  & X'\ar[r]^i \ar[d]^g& X \ar[d]^f\\
Y\ar[r]^{i^\prime} \ar@/_1.1pc/[rr]   & Y^\prime \ar[r]^{r^\prime} &Y 
}
\]

\item Given a solid  diagram 
\[\xymatrix{ A\ar[r] \ar[d]^i &X\ar[d]^p\\
B\ar[r]\ar@{-->}[ur] & Y
}
\] a dotted arrow can be found making the diagram commute if either:
\begin{enumerate}
\item The map $i$ is a cofibration and the map $p$ is both a fibration and a weak equivalence.
\item The map $i$ is both a cofibration and a weak equivalence, and the map $p$ is a fibration.
\end{enumerate} 
\item Any map $X\rightarrow Z$ in $\cC$ admits factorizations 

\[X\rightarrow Y\rightarrow Z\]
\[X\rightarrow Y\rightarrow Z\]
where $f$  is a cofibration, $g$ is a fibration and a weak equivalence, $f'$  is a cofibration and a weak equivalence and $g'$ is a fibration.
\end{enumerate} 
\end{definition}

\begin{definition}

The homotopy category is defined as follows:

\begin{enumerate}
\item The objects of $h\cC$ are the fibrant-cofibrant objects of $\cC$. 
\item For $X,Y\in h\cC$, the set $\hom_{h\cC}(X,Y)$ is the set of homotopy equivalences classes on $\hom(X,Y)$
 
\end{enumerate}
\end{definition}
\subsection{Properness and Homotopy Push out squares}

\begin{definition}
A model category $\cC$ is left proper if weak equivalences are stable under push-out along cofibrations and it is called right proper if weak equivalences are stable under pullbacks along fibrations.
\end{definition}

\subsection{Quillen adjunctions and Quillen equivalences}
Let  $\cM$ and $\cN$ be two model categories and suppose we are given a pair of adjoint functors

\[L:\cM\rightleftarrows \cN:R\] with $L$ a left adjoint and $R$ a right adjoint. The following conditions are equivalent:

\begin{itemize}
\item The functor $L$ preserves cofibrations and trivial cofibrations 
\item The functor $R$ preserves fibrations and trivial fibrations
\item The functor $L$ preserves cofibrations and the functor $R$ preserves fibrations 
\item The functor $L$ preserves trivial fibrations and the functor $R$ preserves trivial fibrations. 
\end{itemize}
 
\begin{definition}
Let  $\cM$ and $\cN$ be two model categories and suppose we are given a pair of adjoint functors

\[L:\cM\rightleftarrows \cN:R.\] If any of the equivalent conditions above is satisfied. Then we say that the pair $(F,G)$ is a \it{Quillen adjunction}  \rm  between $\cM$ and $\cN$. 
\end{definition}

\subsection{Combinatorial model categories}

\begin{definition} 
A model category $\mcM$ is called combinatorial if it is locally presentable and it is cofibrantly generated \it i.e.\rm:
\begin{itemize}
\item There exists a set $\cI$ of generating cofibrations, \it i.e. \rm the collection of all cofibrations in $\mcM$  is the smallest weakly saturated class of morphisms containing $\cI$ 
\item There exists a set $\cJ$ of generating trivial cofibrations, \it i.e. \rm the collection of all trivial cofibrations in $\cM$ is the smallest weakly saturated class  of morphisms containing $\cJ$. 
\end{itemize}
\end{definition}

A combinatorial model structure is uniquely determined by the generating cofibrations and generating trivial cofibrations.  In \cite[\S A.2.6]{lurie2009higher}  the definition is reformulated in order to emphasize in the  class of weak equivalences which are easier to describe. More concretely they prove the following proposition.

Recall that given a presentable category  and $\kappa$ a regular cardinal, it is said that full subcategory  $\cC_0\subset\cC$  is  $\kappa$-\it{accessible subcategory} \rm of $\cC$ if satisfies the following conditions:
\begin{enumerate}
\item $\cC_0$ is stable under $\kappa$-filtered colimits.
\item There exists a small subset of objects of $C_0$ which generates $C_0$ under $\kappa$-filtered colimits.
\end{enumerate}

If the subcategory $\cC_0$ satisfies (1), this second condition is equivalent to say that:

\begin{itemize}[label=$(2_\tau)$]
\item Let $A$ be a $\tau$-filtered partially ordered set and $\{X_\alpha\}_{\alpha\in A}$ a diagram of $\tau$-compact objects of $\cC$ indexed by $A$. For every $\kappa$-filtered subset $B\subset A$ we let $X_B$ denote the ($\kappa$-filtered) colimit of the diagram $\{X_\alpha\}_{\alpha\in B}$. Furthermore suppose that $X_A$ belongs to $\cC_0$. Then for every $\tau$-small subset $C\subset A$, there exists a $\tau$-small $\kappa$-filtered subset $C\subset B\subset A$ , such that $X_B$ belongs to $C_0$.  
\end{itemize}

\begin{rmk}
This characterization  gives the immediate consequence that for every $\kappa$-filtered colimit preserving functor $f:\cC\rightarrow \cD$  between presentable categories and let $\cD_0\subset\cD$ be a $\kappa$-accessible subcategory. Then $f^{-1}(\cD_0)$ is a $\kappa$-accessible subcategory.  
\end{rmk}

\begin{proposition}\label{smith}[Bousfield, Smith, Lurie]
Let $\cM$ be a presentable category endowed with a model structure. Assume that there exists an small set which generates the collection of cofibrations in $\cC$. Then the following are equivalent:
\begin{enumerate}
\item The model category $\cM$ is combinatorial.
\item The collection of weak equivalences in $\cM$ determines an accessible subcategory if $\cM^{[1]}$ (the category of morphisms on $\cM$).

\end{enumerate}
\end{proposition}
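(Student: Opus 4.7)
My plan is to mimic the proof of Theorem~\ref{GoerssThm}, replacing the adjunction $\cR^{\delta}[-]\dashv(-)^{gp}$ by the Galois-equivariant adjunction $\bar\cR^{\vee}[-]_{G}\dashv R$ from Proposition~\ref{final}, and substituting Proposition~\ref{ShvEtgplike} (valid only for algebraically closed $\cR$) by two ingredients: (i) the identification $\bar\cR^{\vee}[R(C)]_{G}\cong\acute{E}t(C)$ from Proposition~\ref{final}; and (ii) the natural coalgebra splitting of $\acute{E}t(C)\hookrightarrow C$ given by Theorem~\ref{etalesplit}, valid over any perfect field. I would proceed in four steps.

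First I would collect the Quillen-adjoint formalities. Proposition~\ref{GadjGoerss} already provides the Quillen adjunction between the $\tau$-$\mathbb{A}^{1}$-injective model structure on $s\Psh(Sm_{F},\sSGd)$ and the left-induced motivic model structure on $\scoCAlg$. The observation (used in that proposition) that on underlying presheaves of simplicial $k$-modules $\bar\cR^{\vee}[\cX]_{G}\cong k[\cX]$ immediately shows that $\bar\cR^{\vee}[-]_{G}$ takes $\mathbf{H}^{\mathbb{A}^{1}}_{\tau}\cR$-equivalences to motivic weak equivalences of coalgebras, proving the first assertion of the theorem and yielding the derived functor $\mathrm{L}\bar\cR^{\vee}[-]_{G}$ on homotopy categories. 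Moreover, Proposition~\ref{final} implies the unit $\cX\to R(\bar\cR^{\vee}[\cX]_{G})$ is section-wise an isomorphism of discrete $G$-sets, hence a (non-derived) isomorphism of discrete $G$-motivic spaces.

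The heart of the proof is to show that $R$ sends motivic weak equivalences in $\scoCAlg$ to $\mathbf{H}^{\mathbb{A}^{1}}\cR$-equivalences of discrete $G$-motivic spaces. Given such an $\alpha:C\to D$, Proposition~\ref{final} furnishes a natural isomorphism $\bar\cR^{\vee}[R(\alpha)]_{G}\cong\acute{E}t(\alpha)$, and the splitting of Theorem~\ref{etalesplit} displays $\acute{E}t(\alpha)$ as a retract of $\alpha$ in the category of morphisms of $\scoCAlg$; since motivic weak equivalences of coalgebras are stable under retracts, $\acute{E}t(\alpha)$ is itself a motivic weak equivalence. Forgetting down to $s\Mod_{\cR}(Sm_{F})$ this says $k[R(\alpha)]$ is an $\mathbb{A}^{1}$-$\tau$-local weak equivalence of presheaves of simplicial $k$-modules, i.e.\ $R(\alpha)$ is an $\mathbf{H}^{\mathbb{A}^{1}}\cR$-equivalence. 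Combined with the previous step this shows the derived unit $\cX\to R((\bar\cR^{\vee}[\cX]_{G})^{fib})$ is always an $\mathbf{H}^{\mathbb{A}^{1}}\cR$-equivalence, which establishes full faithfulness via the triangle identities.

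It remains to check that the target $R((\bar\cR^{\vee}[\cX]_{G})^{fib})$ is $\mathbf{H}^{\mathbb{A}^{1}}\cR$-local. Every coalgebra in the image of $\bar\cR^{\vee}[-]_{G}$ is cofibrant, since cofibrations in $\scoCAlg$ are $U$-monomorphisms and the zero coalgebra is initial. For any $\mathbf{H}^{\mathbb{A}^{1}}\cR$-equivalence $f:\cY\to\cZ$ the adjunction identifies the induced map on mapping spaces with the one induced by $\bar\cR^{\vee}[f]_{G}$ into $(\bar\cR^{\vee}[\cX]_{G})^{fib}$; by the first step $\bar\cR^{\vee}[f]_{G}$ is a motivic weak equivalence between cofibrant objects and the target is motivic fibrant, so Ken Brown's lemma gives the desired weak equivalence, concluding the localization claim. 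The decisive point, and the only non-formal one, is that the \'etale splitting of Theorem~\ref{etalesplit} is natural as a morphism of \emph{presheaves} of \emph{simplicial} coalgebras (not merely on each section and each simplicial degree separately); that compatibility follows from the unicity clause of Theorem~\ref{etalesplit}, but it is what makes the retract argument in the preceding paragraph run, and it is the ingredient whose careful verification would occupy most of the actual proof.
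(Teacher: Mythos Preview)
Your proposal does not address the stated proposition at all. Proposition~\ref{smith} is the abstract Bousfield--Smith--Lurie recognition principle: in a presentable category whose cofibrations are generated by a small set, the model structure is combinatorial if and only if the weak equivalences form an accessible subcategory of $\cM^{[1]}$. The paper's proof is a one-line citation to \cite[Corollary A.2.6.9]{lurie2009higher}. Nothing about coalgebras, Galois groups, the \'etale splitting, or motivic spaces is relevant here.

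What you have written is instead an argument for Theorem~\ref{G-Goerss}, the $\mathbb{A}^{1}$-Goerss theorem over a non-algebraically closed perfect field $k$. For \emph{that} statement your outline is essentially the paper's own proof: one checks $\bar\cR^{\vee}[-]_{G}$ is left Quillen (Proposition~\ref{GadjGoerss}), uses that on underlying modules $\bar\cR^{\vee}[\cX]_{G}\cong k[\cX]$ to transport $\mathbf{H}^{\mathbb{A}^{1}}\cR$-equivalences, invokes Proposition~\ref{final} to see the unit is an isomorphism and that the counit identifies with $\acute{E}t(C)\hookrightarrow C$, and then uses the retract coming from Theorem~\ref{etalesplit} exactly as in Theorem~\ref{GoerssThm}. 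So the content of your write-up is correct, but it is attached to the wrong statement.
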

\begin{proof}
This is proved in \cite[Corollary A.2.6.9]{lurie2009higher}.
\end{proof}

Observe that the subcategory of weak equivalences $W\subset \cM^{[1]}$ in a combinatorial model category $\cM$ is an accessible category. Then we have the following proposition:

\begin{theorem}{(Bousfield, Smith, Lurie)}
Let $S$ be a class of morphisms in a combinatorial model category $\cC$ with corresponding full subcategory $\cC^{0}\subset \cC$ of $S$-local objects. Then the following conditions are equivalent:
\begin{enumerate}
\item $\cC^0\subset \cC$ is a colocalization and $C^{0}$ presentable. 
\item $\cC^0\subset \cC$ is a colocalization and the inclusion preserves $\kappa$-filtered colimits for some regular cardinal $\kappa$.
\item There exists a small set $S^0\subset S$ such that an object in $\cC$ is $S$-local precisely if it is $S^0$-local, equivalently $\bar{S^0}\subset \bar{S}$.
\item There exists a colimit preserving functor  $F:\cC\rightarrow \cD$ to a combinatorial model category $\cD$ such that $\bar{S}$ consist of those morphisms which are sent to equivalences by $F$. 
\end{enumerate}

\end{theorem}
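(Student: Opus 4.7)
The plan is to establish the cycle of implications $(3) \Rightarrow (4) \Rightarrow (1) \Rightarrow (2) \Rightarrow (3)$, with Jeff Smith's recognition theorem for left Bousfield localizations at a small set of maps serving as the main engine, and Proposition \ref{smith} (the combinatorial / accessibility dictionary) providing the bridge between model-categorical and categorical data.

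For $(3) \Rightarrow (4)$, given a small set $S^0 \subset S$ with $\overline{S^0} = \overline{S}$, I would apply Smith's theorem to construct the left Bousfield localization $L_{S^0}\cC$, which is a combinatorial model structure on $\cC$ having the same cofibrations and whose weak equivalences are precisely $\overline{S^0} = \overline{S}$. Setting $\cD := L_{S^0}\cC$ and letting $F$ be the identity functor produces the required colimit-preserving functor to a combinatorial model category. For $(4) \Rightarrow (1)$, since $\cC$ and $\cD$ are locally presentable (being combinatorial) and $F$ preserves all small colimits, the adjoint functor theorem gives a right adjoint $G \colon \cD \to \cC$. The derived adjunction identifies $\cC^0$ with the essential image of $G$ applied to fibrant objects of $\cD$, exhibiting $\cC^0$ as a (co)localization of $\cC$. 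Presentability of $\cC^0$ then follows from Proposition \ref{smith}: the weak equivalences in $\cD$ form an accessible subcategory of $\cD^{[1]}$, their preimage $\overline{S}$ under $F^{[1]}$ is accessible in $\cC^{[1]}$, and the subcategory of $S$-local objects is therefore presentable.

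For $(1) \Rightarrow (2)$, any reflective subcategory of a presentable category whose reflector is accessible is closed under $\kappa$-filtered colimits for some regular $\kappa$; one simply picks $\kappa$ to be the accessibility rank of the reflector $L \colon \cC \to \cC^0$, which exists because left adjoints between presentable categories are always accessible.

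The implication $(2) \Rightarrow (3)$ is where I expect the main obstacle, and I would argue as follows. Closure of $\cC^0$ under $\kappa$-filtered colimits means $\cC^0 \hookrightarrow \cC$ is a $\kappa$-accessible subcategory, so it admits a small set of $\kappa$-compact generators $\{X_\alpha\}$. For each $X_\alpha$, choose a small set of morphisms in $S$ that certifies the $S$-locality of $X_\alpha$ (i.e.\ that suffice, together with the mapping space condition, to recognise when a $\kappa$-compact object is $S$-local), and take $S^0$ to be the union of these choices, enlarged if necessary to achieve closure under the accessibility bound. The nontrivial point is verifying that $S^0$-locality implies $S$-locality: one uses that mapping spaces out of $\kappa$-filtered colimits compute as the corresponding limits when the target is fibrant, so $S$-locality is detected on $\kappa$-compact objects, and then the small set $S^0$ is enough. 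This is essentially the argument underlying Lurie's Proposition~A.2.6.9 and the Bousfield--Smith recognition principle; the delicate bookkeeping consists in balancing the accessibility cardinals so that every object of $\cC^0$ is a $\kappa$-filtered colimit of generators while keeping $S^0$ a genuine set.
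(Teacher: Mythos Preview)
The paper does not actually supply a proof of this theorem: it is stated in the appendix as a result attributed to Bousfield, Smith, and Lurie and is followed immediately by a remark, with no \texttt{proof} environment in between. So there is no argument in the paper to compare your proposal against; the author is simply quoting a known result.

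That said, since you have written an outline, let me comment on it. The implications $(3)\Rightarrow(4)\Rightarrow(1)\Rightarrow(2)$ are set up correctly and are the standard route. Your sketch of $(2)\Rightarrow(3)$, however, has a genuine gap. You propose to take $\kappa$-compact generators $\{X_\alpha\}$ of $\cC^0$ and, for each $X_\alpha$, ``choose a small set of morphisms in $S$ that certifies the $S$-locality of $X_\alpha$''. This does not make sense as written: $S$-locality of an object is a \emph{universal} condition against every map in $S$, so no small subset of $S$ can ``certify'' it unless you already have control over $S$. The usual argument runs the other way around: one shows that the class $\overline{S}$ of $S$-local equivalences is accessible (using that $\cC^0$ is accessible and reflective), and then extracts a small generating set $S^0$ from the accessibility data of $\overline{S}$ itself, not from the generators of $\cC^0$. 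Concretely, one takes $S^0$ to consist of the unit maps $X\to LX$ for $X$ ranging over a set of $\kappa$-compact objects of $\cC$, and checks that $S^0$-locality coincides with $S$-locality; this is what underlies Lurie's Proposition~A.3.7.3 and the surrounding material in \cite{lurie2009higher}. Your closing paragraph gestures at the right references but the mechanism you describe for building $S^0$ would not produce a set with the required property.
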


\begin{rmk}
If we can construct a functor $F:\cC\rightarrow \cD$ which satisfies the conditions of the situation $(4)$ then we can guarantee the existence of the Bousfield localization, but we can not characterize the local objects in terms of such functor. 
\end{rmk}

For doing that lets recall the following proposition from \cite{lurie2009higher}:
\begin{theorem}\label{GenModel}
Let $\mcM$ a locally presentable category and let $W$ and $C$ be classes of morphisms in $\mcM$ with the following properties:
\begin{enumerate}
\item  The collection $C$ is a weakly saturated class of morphisms of $\mcM$ of morphisms of $\mcM$, and there exists a small subset $C_0\subset C$ which generates $C$ as a weakly saturated class of morphisms.
   
\item The intersection $C\cap W$ is a weakly saturated class of morphisms of $\mcM$ 
\item The full subcategory $W\in \mcM^{[1]}$ is an accessible subcategory of $\mcM^{[1]}$.
\item The class $W$ has the two-out-of-three property. 
\item If $f$ is a morphism in $\mcM$ which has the right lifting property with respect to each element of $C$, then $f\in W$. 
\end{enumerate}
Then $\mcM$ admits a combinatorial model structure, where the weak equivalences are the elements of $C$ and the weak equivalences in $\mcM$ are the elements of $W$ and a morphism is a fibration if and only if it has the right lifting property with respect to every morphisms in $C\cap W$.
\end{theorem}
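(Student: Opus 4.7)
The strategy is to define the three distinguished classes as follows: cofibrations are the elements of $C$, weak equivalences are the elements of $W$, and fibrations are the morphisms with the right lifting property with respect to $C \cap W$. Since $\mcM$ is locally presentable, it is complete and cocomplete, so axiom (1) of a model category holds. The two-out-of-three property is exactly hypothesis (4). Weak saturation of $C$ and $C \cap W$ from (1) and (2) gives closure under retracts for cofibrations and trivial cofibrations immediately; fibrations are closed under retracts because right-lifting classes always are. Closure of $W$ itself under retracts follows from a standard trick using (4), the factorizations constructed below, and the retract argument.

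The substantive content lies in producing the two factorizations. For the $(\text{cof},\text{triv.\ fib.})$ factorization, one runs the small object argument on $C_0$; this is legitimate since $\mcM$ is presentable (and $C_0$ is small by (1)). The left factor is in the weakly saturated closure of $C_0$, hence in $C$. The right factor $p$ has the right lifting property with respect to $C_0$, and therefore with respect to all of $C$ by weak saturation; by (5) it lies in $W$, and by definition of fibration it is a fibration. Combined with the retract argument, this also delivers one half of the lifting axiom.

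The main obstacle, and the heart of Jeff Smith's theorem, is producing a small generating set $J \subseteq C \cap W$ for the trivial cofibrations, so that the small object argument yields the $(\text{triv.\ cof.},\text{fib.})$ factorization. This is exactly where hypothesis (3), the accessible embedding of $W$ into $\mcM^{[1]}$, is indispensable. Choose a regular cardinal $\kappa$ large enough that: the domains and codomains of $C_0$ are $\kappa$-presentable, $\mcM$ is $\kappa$-accessible, and $W$ is $\kappa$-accessibly embedded in $\mcM^{[1]}$. Take $J$ to be a set of representatives, up to isomorphism, of morphisms in $C \cap W$ whose domain and codomain are $\kappa$-presentable. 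One must verify that the class $J\text{-inj}$ of morphisms with the right lifting property against $J$ coincides with the fibrations. Given an arbitrary $g \in C \cap W$, first apply the $C_0$-small object argument to factor $g = q \circ i$ with $i$ a cellular $C_0$-complex and $q$ having the right lifting property with respect to $C$; by (5), $q \in W$, and then two-out-of-three forces $i \in C \cap W$. By the retract argument, $g$ is a retract of $i$, so it suffices to establish lifting against $i$. Write $i$ as a transfinite composite of pushouts of maps in $C_0$; a lifting problem against $i$ reduces, using the $\kappa$-accessibility of $W$, to a lifting problem against a $\kappa$-presentable subcomplex that itself belongs to $C \cap W$, and hence represents an element of $J$.

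With $J$ in hand, the $(\text{triv.\ cof.},\text{fib.})$ factorization is produced by the small object argument applied to $J$; its left factor lies in the weakly saturated closure of $J \subseteq C \cap W$, which by (2) remains inside $C \cap W$. The remaining lifting axiom holds by definition of fibration. The crucial and technically delicate step is the extraction of $J$ from the accessibility hypothesis (3); once this is accomplished, all other verifications are either immediate from the hypotheses or routine applications of the retract and small object arguments. The resulting model structure is combinatorial because $\mcM$ is presentable and both $C_0$ and $J$ are small.
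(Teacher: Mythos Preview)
The paper does not give its own proof of this theorem: it simply cites \cite[Proposition A.2.6.8]{lurie2009higher}. Your sketch is a faithful outline of the standard proof of Smith's theorem as found in that reference (and in Beke's earlier exposition), so in that sense it is consistent with what the paper invokes.

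Your argument is correct in structure. One point deserves a sharper statement: the passage where you say that a lifting problem against a cellular $C_0$-complex $i\in C\cap W$ ``reduces, using the $\kappa$-accessibility of $W$, to a lifting problem against a $\kappa$-presentable subcomplex that itself belongs to $C\cap W$'' is the genuinely delicate step, and as written it hides a nontrivial lemma. What one actually proves is a bounded-cofibration (or ``solution set'') statement: given any $\kappa$-presentable subobject of the target of $i$, there is a $\kappa$-presentable sub-cell-complex containing it whose inclusion lies in $W$ (hence in $C\cap W$). This uses both the accessibility of $W$ and the fact that the cell attachments come from the small set $C_0$ with $\kappa$-presentable domains and codomains. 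From this one deduces that $J$-cof $= C\cap W$, not merely that $J$-inj coincides with the fibrations. Your sketch gestures at this but does not quite articulate it; if you were writing this out in full, that lemma is where the work goes.
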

\begin{proof}
\cite[Proposition A.2.6.8 ]{lurie2009higher}
\end{proof}
 
\begin{rmk}
The theorem \ref{GenModel} is useful to create new model structures on a locally presentable categories. As is pointed out in \cite{MR3073905} this theorem does not assume have a given  explicit set of generating trivial cofibrations but the existence depends on the accessibility of the class of weak equivalences. Usually, the condition of the accessibility of the weak equivalences is not that easy to verify but for our purposes the following proposition will be useful.   \end{rmk}

\begin{proposition}\label{Modelinverse}
Let $F:\cC\rightarrow \cD$ be an accessible functor and $D'$ an accessible  and accessible embedding subcategory of $\cD$. Then $F^{-1}(\cD')$ is an accessible and accessible embedding subcategory of $\cC$.  
\end{proposition}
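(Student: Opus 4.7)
The plan is to apply the characterization $(2_\tau)$ recalled in the paragraph immediately preceding the statement, reducing accessibility of $F^{-1}(\cD')$ to that of $\cD'$ via the functor $F$. First, I would select a regular cardinal $\tau$ large enough so that three conditions hold simultaneously: (a) $F$ preserves $\tau$-filtered colimits; (b) $\cD'$ is $\tau$-accessible and the inclusion $\cD' \hookrightarrow \cD$ preserves $\tau$-filtered colimits; and (c) $F$ carries $\tau$-compact objects of $\cC$ to $\tau$-compact objects of $\cD$. Items (a) and (b) are immediate from the hypotheses (accessibility of $F$ and accessible embedding of $\cD'$), while (c) is the technical clause: one must enlarge $\tau$ using the sharply-below relation $\lambda \triangleleft \kappa$ of accessible category theory until $F$ is compatible with the $\tau$-compact objects on both sides.

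Next, I would verify stability of $F^{-1}(\cD')$ under $\tau$-filtered colimits, which is axiom (1) of accessible embedding. Given a $\tau$-filtered diagram $\{X_\alpha\}$ in $F^{-1}(\cD')$ with colimit $X$, clause (a) gives $F(X) = \colim_\alpha F(X_\alpha)$, and clause (b) shows this colimit lies in $\cD'$, hence $X \in F^{-1}(\cD')$. In particular the inclusion $F^{-1}(\cD') \hookrightarrow \cC$ preserves $\tau$-filtered colimits, which is the accessible-embedding condition.

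Then I would verify condition $(2_\tau)$ for $F^{-1}(\cD')$: let $A$ be a $\tau$-filtered poset, $\{X_\alpha\}_{\alpha\in A}$ a diagram of $\tau$-compact objects of $\cC$ whose colimit $X_A$ lies in $F^{-1}(\cD')$, and $C \subset A$ a $\tau$-small subset. By clause (c), the family $\{F(X_\alpha)\}$ consists of $\tau$-compact objects of $\cD$; by clause (a) its colimit is $F(X_A)$, which lies in $\cD'$. Since $\cD'$ is $\tau$-accessibly embedded in $\cD$, the analogous $(2_\tau)$ for $\cD'$ yields a $\tau$-small, $\tau$-filtered subset $C \subset B \subset A$ such that $\colim_{\alpha \in B} F(X_\alpha) \in \cD'$. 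Applying (a) once more to the restricted diagram identifies this colimit with $F(X_B)$, so $X_B \in F^{-1}(\cD')$, confirming $(2_\tau)$.

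The principal obstacle is the cardinal-arithmetic step (c) in the setup: guaranteeing that $F$ can be made to preserve $\tau$-compactness on the nose, not merely $\tau$-filtered colimits. This is standard but nontrivial, relying on the fact that for any accessible functor between accessible categories there are arbitrarily large regular cardinals $\tau$ sharply below which everything is compatible; once such a $\tau$ is fixed, the two verifications above are mechanical consequences of the preservation property (a) and condition $(2_\tau)$ applied to $\cD'$.
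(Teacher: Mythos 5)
The paper offers no argument of its own here: its ``proof'' is a one-line citation (the reference given as Remark 2.50 of Lurie's book), so there is no in-paper reasoning to compare with step by step. What you have written is the standard argument that underlies that citation, and its outline is correct: use the uniformization/sharply-below machinery to pick a regular cardinal $\tau$ for which $F$ preserves $\tau$-filtered colimits and $\tau$-compact objects and for which $\cD'$ is $\tau$-accessibly embedded; then closure of $F^{-1}(\cD')$ under $\tau$-filtered colimits is immediate, and accessibility is transferred from $\cD'$ along $F$ via the generation criterion the paper recalls as $(2_\tau)$. You also correctly isolate the only genuinely delicate point, namely arranging preservation of compact objects, which the paper's citation silently absorbs.

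One piece of bookkeeping in your third paragraph needs repair. You state the criterion with a single cardinal, asking for a ``$\tau$-small, $\tau$-filtered'' subset $B$; but a $\tau$-small $\tau$-filtered poset has a greatest element, so this formulation is degenerate. The criterion (both in Lurie's version and in the paper's own recollection preceding the statement) involves two cardinals $\kappa$ and $\tau$ with $\tau$ sharply larger than $\kappa$: the subset $B$ is $\tau$-small but only $\kappa$-filtered. This matters for your final step, where you ``apply (a) once more'' to identify $\colim_{\alpha\in B}F(X_\alpha)$ with $F(X_B)$: preservation of $\tau$-filtered colimits does not apply to a merely $\kappa$-filtered colimit. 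The fix is to fix $\kappa$ at the start at least as large as the accessibility ranks of $F$ and of the inclusion $\cD'\subseteq\cD$ (so that $F$ and the inclusion preserve $\kappa$-filtered colimits), and only then choose $\tau\gg\kappa$ by uniformization; with that adjustment your two verifications go through verbatim and the proof is complete.
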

\begin{proof}
\cite[Remark 2.50]{lurie2009higher}
\end{proof}
\begin{rmk}
In particular if $\cD$ is a combinatorial model category and let $\cD^{[1]}$ be the category of morphisms in $\cD$, then by \cite[Corollary A.2.6.6]{lurie2009higher}  the full subcategory spanned  by the weak equivalences   $W\in A^{[1]}$, the full subcategory spanned  by the fibrations $F\in A^{[1]}$ and $F\cup W$ are accessible subcategories of $\cD^{[1]}$. By  \cite[Proposition 2.23]{jiris1994} each left or right adjoint between accessible categories is an accessible functor, this will be useful to induce model structures by left or right adjoint. 
\end{rmk}

\begin{definition}
Let $\mcM$ and $\mcM_{loc}$ be two model structures in the same underlying category. We say that $\mcM_{loc}$ is a \it left Bousfield localization \rm of $\mcM$, if the following conditions are satisfied:
\begin{itemize}
\item A morphism $f$ is a cofibration  in $\mcM$ if and only if $f$ is a cofibration in $\mcM_{loc}$.
\item If  a morphism $f$ is a weak equivalence on  $\mcM$, then $f$ is a weak equivalence in $\mcM_{loc}$. 
\end{itemize}
\end{definition}

\begin{proposition}\label{comblocal}
Let $\mcM$ be a left proper combinatorial simplicial model category. Then, every combinatorial Bousfield localization of $\mcM$ has the form $S^{-1}\mcM$, where $S$ is some small set of cofibrations in $\mcM$.
\end{proposition}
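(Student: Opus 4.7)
The plan is to use the characterization of combinatorial model structures from Proposition \ref{smith}: in a combinatorial model category, the class of weak equivalences is an accessible subcategory of the arrow category. So my starting observation is that if $\mcM_{loc}$ is a combinatorial Bousfield localization, then its class $W_{loc}$ of weak equivalences sits as an accessible subcategory $W_{loc} \subset \mcM^{[1]}$. The cofibrations of $\mcM_{loc}$ coincide with those of $\mcM$ by definition, so everything to be recovered lives in the data of $W_{loc}$.

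First I would fix a regular cardinal $\lambda$ large enough that $W_{loc}$ is closed under $\lambda$-filtered colimits in $\mcM^{[1]}$ and is generated under such colimits by a small subset $W_0 \subset W_{loc}$ of $\lambda$-presentable morphisms. For each $f \in W_0$ I would apply the cofibration/trivial-fibration factorization in $\mcM$ to write $f = p_f \circ i_f$, and take $S := \{i_f : f \in W_0\}$. By two-out-of-three applied in $\mcM_{loc}$ (where $p_f$ is automatically a weak equivalence), each $i_f$ is a trivial cofibration in $\mcM_{loc}$, so $S$ is a small set of cofibrations of $\mcM$ contained in $W_{loc}$. Left properness, combinatoriality, and the simplicial structure on $\mcM$ ensure via Smith's theorem that the left Bousfield localization $S^{-1}\mcM$ exists.

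Next I would argue that $S^{-1}\mcM = \mcM_{loc}$ as model structures. The inclusion $\mathrm{cof}(S^{-1}\mcM) = \mathrm{cof}(\mcM) = \mathrm{cof}(\mcM_{loc})$ is immediate, so it suffices to compare weak equivalences. Since $S \subset W_{loc}$, the identity $S^{-1}\mcM \to \mcM_{loc}$ is a left Quillen functor, hence $W_{S} \subset W_{loc}$. For the reverse inclusion, I would verify that every $S$-local fibrant object is $\mcM_{loc}$-fibrant: an $S$-local object sees each $i_f$ as a weak equivalence on derived mapping spaces; combining with the trivial fibration $p_f$, it sees every $f \in W_0$ the same way; then one propagates this along $\lambda$-filtered colimits and the two-out-of-three closure to conclude that the object is local with respect to the entire class $W_{loc}$, hence fibrant in $\mcM_{loc}$. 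Matching fibrant objects together with matching cofibrations forces the weak equivalences to coincide.

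The main obstacle I anticipate is the propagation step in the last paragraph: showing that $S$-locality of a fibrant object implies $W_{loc}$-locality. The accessibility of $W_{loc}$ gives generation by $W_0$ under $\lambda$-filtered colimits, but one must also invoke the two-out-of-three property and the compatibility of derived mapping spaces with $\lambda$-filtered colimits of $\lambda$-presentable morphisms in order to pass the locality condition from $S$ to all of $W_{loc}$. This is exactly the place where the combinatorial and simplicial hypotheses on $\mcM$ are used essentially — the simplicial enrichment provides the mapping spaces controlling locality, and $\lambda$-presentability of the generators of $W_{loc}$ controls the colimit argument. Once that step is in place, the equality $S^{-1}\mcM = \mcM_{loc}$ follows formally.
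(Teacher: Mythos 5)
The paper offers no argument of its own here (it simply cites \cite[Proposition A.3.7.4]{lurie2009higher}), so your proposal has to stand on its own, and as written it has two genuine gaps, both located exactly where you put the weight. First, the propagation step. Accessibility of $W_{loc}\subset\mcM^{[1]}$ (Proposition \ref{smith}) gives you that every $f\in W_{loc}$ is a $\lambda$-filtered colimit, computed in the arrow category, of the generators in $W_0$; but to deduce that an $S$-local fibrant $Z$ is local with respect to $f$ you must convert $\mathrm{Map}^h(\colim_\alpha f_\alpha,Z)$ into the homotopy limit of the $\mathrm{Map}^h(f_\alpha,Z)$, i.e.\ you need that $\lambda$-filtered colimits are homotopy colimits in $\mcM$ for a suitable $\lambda$ (and you need to handle cofibrant replacement of the generators). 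That is a substantive theorem about combinatorial model categories, not a formal consequence of accessibility or of two-out-of-three, and you neither prove nor cite it. Second, your concluding inference ``local with respect to all of $W_{loc}$, hence fibrant in $\mcM_{loc}$'' is unjustified and close to circular: fibrancy in the abstract localization $\mcM_{loc}$ means the right lifting property against its trivial cofibrations, and the identification of fibrant objects with mapping-space-local objects is precisely the kind of statement one only has after knowing $\mcM_{loc}=S^{-1}\mcM$. To close this you would need a lifting argument for trivial cofibrations with non-cofibrant domains (this is where left properness and the Hirschhorn-style machinery enter), together with the fact that a model structure is determined by its cofibrations and its fibrant objects; none of this is in the sketch. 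Your step that $W_S\subset W_{loc}$ via the universal property of $S^{-1}\mcM$ is fine.

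There is also a much shorter route, which is the one behind the cited result: since $\mcM_{loc}$ is combinatorial it is cofibrantly generated, so take $S$ to be a small set of generating trivial cofibrations of $\mcM_{loc}$; these are cofibrations of $\mcM$. Then $W_{loc}\subseteq W_S$ is formal: the trivial cofibrations of $\mcM_{loc}$ are the weak saturation of $S$, each element of $S$ is tautologically an $S$-local equivalence, the $S$-trivial cofibrations form a weakly saturated class, and the trivial fibrations of the two structures agree because the cofibrations do, so any $f\in W_{loc}$ factors as an $S$-trivial cofibration followed by a trivial fibration of $\mcM$. The reverse inclusion is exactly your universal-property argument. This choice of $S$ eliminates both problematic steps (no filtered-colimit propagation, no fibrant-object comparison) and, unlike your argument, actually uses the cofibrant generation of $\mcM_{loc}$ rather than only the accessibility of its weak equivalences.
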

\begin{proof}
This proof is giving in \cite[Proposition A.3.7.4]{lurie2009higher}. 
\end{proof}


\subsection{Diagram Categories and Homotopy Limits and Colimits }

\begin{definition}
Let $\cI$ be an small category and  $\cM$ a combinatorial model category.  We will that a natural transformation   $\alpha:F\rightarrow G$ is:

\begin{itemize}
\item a \it level-wise weak equivalence \rm if $ f(C):\cX(C)\rightarrow\cY(C)$ is a weak equivalence in $\cM$ for each $C\in \cI$. 
\item an \it injective  cofibration \rm if  $\alpha(C):F(C)\rightarrow G(C)$  is a cofibration in $\cM$ for each $C\in \cI$. 
\item a \it projective fibration  \rm if  $\alpha(C):F(C)\rightarrow G(C)$  is a fibration in $\cM$ for each $C\in \cI$. 
\item an \it injective fibration \rm if it has the right lifting property with respect to every morphism $\alpha\in \mbox{Fun}(\cI,\cM)$ which that is both a level weak equivalence and an injective  cofibration. 
\item a \it projective cofibration \rm it it has the left lifting property with respect to every morphism $\alpha\in\mbox{Fun}(\cI,\cM)$  which is both a  level weak equivalence and a projective fibration. 
\end{itemize}
\end{definition}

\begin{proposition}\label{proinj}
Let $\cI$ be a small category and $\cM$ a combinatorial model category. There exist two combinatorial model structures on $\mbox{Fun}(\cI,\cM)$. 
 \begin{itemize}
 \item The  \it projective model structure determined by the level-wise weak equivalences, projective fibrations and projective cofibrations. 
 
  \item  The \it injective model structure  determined by the level-wise weak equivalence, injective cofibrations, injective fibrations.
   \end{itemize}
\end{proposition}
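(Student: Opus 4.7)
The plan is to apply the Bousfield--Smith--Lurie recognition Theorem \ref{GenModel} in both cases, with the same class $\mathbf{W}$ of level-wise weak equivalences playing the role of $W$. First I would verify that $\mathbf{W}$ is accessibly embedded in $\mathrm{Fun}(\cI,\cM)^{[1]}$ and satisfies two-out-of-three: the latter is inherited pointwise, while for the former, Proposition \ref{smith} shows that the weak equivalences $\mathcal{W}_\cM$ of $\cM$ form an accessibly embedded subcategory of $\cM^{[1]}$, and for each $C\in \cI$ the evaluation functor $\mathrm{ev}_C$ has both adjoints and is hence accessible, so $\mathrm{ev}_C^{-1}(\mathcal{W}_\cM)$ is accessibly embedded by Proposition \ref{Modelinverse}. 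The class $\mathbf{W}$ is the intersection of this small family of subcategories indexed by $\mathrm{Ob}(\cI)$, which is again accessibly embedded, verifying hypotheses (3) and (4) of Theorem \ref{GenModel} in both cases.

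For the projective structure, fix sets $I$, $J$ of generating (trivial) cofibrations of $\cM$. Each evaluation $\mathrm{ev}_C$ admits a left adjoint $F_C$ given by $F_C(X)(D) = \coprod_{\mathrm{Hom}_\cI(C,D)} X$. I would take $\tilde I := \{F_C(i) : C \in \cI,\, i \in I\}$ as the candidate generating set of cofibrations. By adjunction, a morphism $\alpha$ has right lifting property with respect to $\tilde I$ if and only if each $\alpha(C)$ is a trivial fibration in $\cM$, i.e.\ $\alpha$ is a level-wise trivial fibration; in particular such $\alpha$ lies in $\mathbf{W}$, which gives condition (5) of Theorem \ref{GenModel}. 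Conditions (1) and (2) are immediate because pushouts, retracts, and transfinite compositions of level-wise (trivial) cofibrations are computed pointwise in $\cM$.

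The injective structure is more delicate, since the class of level-wise cofibrations is not obviously generated by a small set. The strategy is to choose a regular cardinal $\kappa$ larger than $|\mathrm{Ob}(\cI)|$, the cardinalities of $I$ and $J$, and a rank of accessibility for $\cM$, and to set $\tilde I_{\mathrm{inj}}$ to be a representative set of level-wise cofibrations $F \hookrightarrow G$ with $G$ $\kappa$-presentable in $\mathrm{Fun}(\cI,\cM)$. I would then prove the filtration lemma: every level-wise cofibration is a transfinite composition of pushouts of elements of $\tilde I_{\mathrm{inj}}$. This is shown by writing $G$ as a $\kappa$-filtered colimit of $\kappa$-presentable subfunctors and inductively enlarging each piece so that its intersection with $F$ yields a level-wise cofibration. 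Ensuring $\tilde I \subset \tilde I_{\mathrm{inj}}$ (by enlarging $\kappa$ so that every $F_C(i)$ is $\kappa$-presentable), the RLP characterization from the projective case shows that any morphism with RLP against $\tilde I_{\mathrm{inj}}$ is in particular a projective trivial fibration, hence a level-wise trivial fibration, and so lies in $\mathbf{W}$, giving condition (5) here too.

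The main obstacle will be the filtration lemma for the injective structure: one must enlarge a $\kappa$-presentable sub-functor $G_0 \subset G$ so that $F \cap G_0 \hookrightarrow G_0$ remains in the desired class, and the trivial-cofibration analogue additionally requires absorbing witnesses of weak equivalence at each object of $\cI$. This relies on the accessibility of $\mathbf{W}$ established in the first step, combined with a $\cI$-indexed Bousfield--Smith--Quillen cardinality argument in the spirit of the one used in the proof of Theorem \ref{mono}. Once this technicality is handled, Theorem \ref{GenModel} delivers both model structures, and combinatoriality is automatic since $\mathrm{Fun}(\cI,\cM)$ is presentable and the generating sets are small by construction.
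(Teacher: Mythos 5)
The paper disposes of this proposition with a one-line citation to \cite[Proposition A.2.8.2]{lurie2009higher}, so your proposal, which reconstructs the argument from Theorem \ref{GenModel}, is a genuinely different (self-contained) route --- though in substance it retraces the proof of the cited result. Your projective case is the standard transfer: lifting generating (trivial) cofibrations of $\cM$ along the left adjoints $F_C$ of the evaluations, with the adjunction identification of maps having the RLP against $\tilde{I}$ as level-wise trivial fibrations; to land on the model structure as stated in the proposition you should also check that the fibrations produced by Theorem \ref{GenModel} (maps with RLP against projective trivial cofibrations) are exactly the level-wise fibrations, which uses the companion set $\tilde{J}=\{F_C(j)\}$ and the fact that its saturation consists of level-wise trivial cofibrations. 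Your accessibility argument for $\mathbf{W}$ via the evaluation functors and Proposition \ref{Modelinverse} is exactly in the spirit the paper uses elsewhere (Theorems \ref{mono} and \ref{msCoalgebras}). For the injective structure your plan is the right one, but two points in the sketch need repair: a ``representative set'' of level-wise cofibrations $F\hookrightarrow G$ with only the codomain $G$ required to be $\kappa$-presentable is not a set (any object maps to a terminal object), so you must bound the domains as well, e.g.\ take cofibrations between $\kappa$-presentable objects; and the filtration step phrased as intersecting $F$ with $\kappa$-presentable subfunctors of $G$ tacitly assumes cofibrations are monomorphisms, which is true in the paper's examples (injective structures on simplicial presheaves and simplicial modules) but not in an arbitrary combinatorial $\cM$, where one needs the more delicate cardinality/good-colimit argument of \cite[Lemma A.2.8.3 and Proposition A.1.5.12]{lurie2009higher}. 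What your route buys is explicit generating sets and a proof uniform with the paper's own transfer arguments; what the citation buys is precisely the careful handling of these set-theoretic points in full generality.
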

\begin{proof}
This is proved in \cite[Proposition A.2.8.2]{lurie2009higher}.
\end{proof}

\begin{rmk} 
\begin{itemize}

\item\rm Let $\cI$ be an essentially small category and $\cM$ a combinatorial model category. Then the proposition above is valid for  the functor category  $\mbox{Fun}(I,M)$ \cite[Proposition A.2.8.2]{lurie2009higher}. These model structures are useful to define \it homotopy limits \rm  and \it homotopy colimits. \rm

\rm

\item It follows from the definitions that the class of projective cofibrations is contained in the class of injective cofibrations and dually the class of injective fibrations is contained in the class of projective fibrations. Then it induces a Quillen adjunction given by the identity maps.

\[id\colon \mbox{Fun}(\cI,\cM)_{proj} \leftrightarrows \mbox{Fun}(\cI,\cM)_{inj} \colon id,\]
which is a Quillen equivalent because both model structures have the same weak equivalences. 
\item  Given a Quillen adjunction between combinatorial model structures $F\colon \cM \leftrightarrows \cN \colon G$ and a small category $\mathcal{I}$ the adjunction induced in the categories of functors $F^{\cI}\colon \rm{Fun}(\cI,\cM) \leftrightarrows\rm{Fun}(\cI,\cN) \colon G^{\cI}$ is a Quillen adjunction with respect to either the projective or the injective model structures. Furthermore if the $(F,G)$ is a Quillen equivalence, then so is  $(F^{\cI},G^{\cI})$. 

\item Let $f\colon \cI\rightarrow \cJ$ be a functor between small categories. Then the composition with $f$ induces a pullback functor  $f^{*}\colon \mbox{Fun}(\cI,\cM) \rightarrow \mbox{Fun}(\cI,\cM) $. Since $\cM$ admits small limits and colimits, $f^{*}$ has a right adjoint, which is denoted as $f_{*}$, and a left adjoint which is denoted as $f_{!}$. 
\end{itemize}
\end{rmk}

\begin{proposition}\label{basechange}
Let $\cM$ be a combinatorial model category and  $f\colon \cI\rightarrow \cJ$ functor between small categories. Then 
\begin{itemize}
\item The pair $(f_{!},f^{*})$ is a Quillen adjunction  between the projective model structures on $\mbox{Fun}(\cI,\cM)$ and $Fun(\cJ,\cM)$.
\item The pair $(f^{*},f_{*})$ is a Quillen adjunction  between the injective model structures on $\mbox{Fun}(\cI,\cM)$ and $Fun(\cJ,\cM)$.  

\end{itemize}
\end{proposition}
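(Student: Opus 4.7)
The plan is to exploit the fact that both the projective and injective model structures have one class of maps (fibrations in the projective case, cofibrations in the injective case) characterized level-wise, combined with the trivial observation that the pullback functor $f^{*}$ preserves level-wise properties. Everything else is a formal consequence.

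First I would recall that the adjunctions $(f_{!},f^{*})$ and $(f^{*},f_{*})$ on functor categories exist because $\cM$ is complete and cocomplete: the functors $f_{!}$ and $f_{*}$ are the left and right Kan extensions along $f$, and their existence is pointwise computed by $(f_{!}F)(j)=\colim_{f(i)\to j}F(i)$ and $(f_{*}F)(j)=\lim_{j\to f(i)}F(i)$. So we only need to verify the lifting properties on generators.

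For the first statement, I would observe that in the projective model structure on $\mathrm{Fun}(\cJ,\cM)$, a morphism $\alpha$ is a fibration (resp. trivial fibration) precisely when $\alpha(j)$ is a fibration (resp. trivial fibration) in $\cM$ for every $j\in\cJ$. Since $(f^{*}\alpha)(i)=\alpha(f(i))$, the functor $f^{*}$ manifestly sends projective fibrations to projective fibrations and projective trivial fibrations to projective trivial fibrations. Hence $f^{*}$ is right Quillen for the projective structures, and $(f_{!},f^{*})$ is a Quillen adjunction.

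For the second statement, I would run the dual argument: in the injective model structure, a morphism $\alpha$ is a cofibration (resp. trivial cofibration) exactly when every $\alpha(j)$ is a cofibration (resp. trivial cofibration) in $\cM$. By the same pointwise formula, $f^{*}$ sends injective cofibrations to injective cofibrations and trivial injective cofibrations to trivial injective cofibrations, so $f^{*}$ is left Quillen for the injective structures and $(f^{*},f_{*})$ is a Quillen adjunction.

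There is essentially no obstacle here; the only subtle point would be verifying the level-wise characterization of projective fibrations and injective cofibrations, but this is part of the definition of those model structures recalled in Proposition \ref{proinj}. In particular, weak equivalences in either structure are level-wise, so $f^{*}$ always preserves weak equivalences, and one automatically gets that the derived functors $\mathbb{L}f_{!}$ and $\mathbb{R}f_{*}$ compute the usual homotopy Kan extensions.
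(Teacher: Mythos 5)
Your argument is correct and is essentially the paper's own proof: the paper likewise deduces both statements from the single observation that $f^{*}$ acts level-wise and therefore preserves level-wise weak equivalences, projective fibrations, and injective cofibrations (hence also the trivial fibrations and trivial cofibrations, which are the intersections with the level-wise weak equivalences). Your additional remarks on the pointwise Kan extension formulas and the characterization of Quillen adjunctions just spell out details the paper leaves implicit.
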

\begin{proof}
This follows from the fact that $f^{*}$ preserves level-wise weak equivalences, projective fibrations and injective cofibrations. 
\end{proof}

\subsection{Homotopy limits and homotopy colimits}

Let $[0]$ be the category with one object and one identity morphism. Let $f\colon \cI\rightarrow [0]$ be the unique functor  and $\cM$  a category which admits small limits and colimits. the pullback functor $\Delta:=f^{*}\colon \cM\rightarrow \mbox{Fun}(\cI,\cM)$, which sends every object $m\in\cM$ to the constant functor admits left and right adjoint. 
\begin{definition}
The right adjoint to $\Delta$ is called the \textit{limit functor} $\lim_{\cI}\colon \mbox{Fun}(\cI,\cM)_{inj}\rightarrow \cM$ and the  left adjoint is called the \textit{colimit functor} $\mbox{colim}_{\cI}\colon \mbox{Fun}(\cI,\cM)_{proj}\rightarrow \cM$ 
\end{definition}

\begin{rmk}
\rm By the Proposition \ref{basechange} the pair of adjoint functors \, $\mbox{colim}_{I}\colon \mbox{Fun}(\cI,\cM)_{proj}\leftrightarrows \cM\colon \Delta$    \, and   \, $ \Delta \colon\cM\leftrightarrows \mbox{Fun}(\cI,\cM)_{inj} \colon \lim_{I} $ \, are Quillen pairs.
\end{rmk}

\begin{definition}
Let $\cI$ be an small category and $\cM$ a combinatorial model category. The \textit{homotopy limit functor} is the right derived functor $\mathbf{R}\mbox{lim}_{\cI}$ and the \textit{homotopy colimit functor} is the left derived functor $\mathbf{L}\mbox{colim}_{I}$
\end{definition}

\begin{definition}
A monoidal model structure on a closed symmetric monoidal category $(\cM,\otimes,\mathbb{I})$ is model stricture on $\cM$ such that the following properties are fulfilled.
\begin{itemize}
\item \it Push-out product axiom. \rm For every pair of cofibrations $i:A\hookrightarrow B$ and $j:C\hookrightarrow D$ their push-out product 
 \[i\scalebox{.74}{$\square$} j:B\otimes C\coprod_{A\otimes C}A\otimes D\rightarrow B\otimes D \] is also a cofibration. If in addition one of the former morphisms is a \it weak-equivalence \rm, so is the later morphism. 
 
  \item \it Unit Axiom. \rm For every cofibrant object $A$ and for a cofibrant replacement $Q(\{*\})$, the induced map $Q(\{*\})\otimes A\rightarrow \{*\}\otimes A$ is a weak equivalence. 
 \end{itemize}
 \end{definition}
 \begin{rmk}
 \item This is equivalent to saying that for every symmetric monoidal model category and a cofibrant object $A$ the  adjunction $(-\otimes A, \underline{\mbox{hom}}(A,-))$ is a Quillen adjunction.  \end{rmk}

\bibliographystyle{amsplain}
\bibliography{Bibliografia}

\end{document}